\theoremstyle{plain}
  \newtheorem{thm}{Theorem}[section]
  \newtheorem{prop}[thm]{Proposition}
  \newtheorem{lem}[thm]{Lemma}
  \newtheorem{cor}[thm]{Corollary}
\theoremstyle{definition}
  \newtheorem{dfn}[thm]{Definition}
  \newtheorem{exmp}[thm]{Example}
  \newtheorem{ques}[thm]{Question}
  \newtheorem{rem}[thm]{Remark} 
\numberwithin{equation}{section}
\let\opn\operatorname 
\newcommand\ba{\mathbf{a}}
\newcommand\BB{\mathbb{B}}
\newcommand\NN{\mathbb{N}}
\newcommand\ZZ{\mathbb{Z}}
\newcommand\kk{\Bbbk}
\newcommand\wS{\widetilde{S}}
\newcommand\wI{\widetilde{I}}
\newcommand\wF{\widetilde{F}}
\newcommand\wG{\widetilde{G}}
\newcommand\wP{{\widetilde{P}}}
\newcommand\m{\mathsf{m}}
\newcommand\n{\mathsf{n}}
\newcommand\wm{\widetilde{\m}}
\newcommand\wn{\widetilde{\n}}
\newcommand\wu{\widetilde{\mathsf{u}}}
\newcommand\fb{\mathfrak{b}}
\newcommand\ga{\alpha}
\newcommand\gl{\lambda}
\newcommand\gs{\sigma}
\newcommand\gt{\tau}
\newcommand\gu{\upsilon}
\newcommand\gD{\Delta}
\newcommand\gG{\Gamma}
\newcommand\gL{\Lambda}
\newcommand\gU{\Upsilon}
\newcommand\wgD{\widetilde{\gD}}
\newcommand\cC{\mathcal{C}}
\newcommand\cF{\mathcal{F}}
\newcommand\sF{\mathscr{F}}
\newcommand\cell{\sigma}
\newcommand\Iff{\Longleftrightarrow}
\newcommand\void\varnothing
\newcommand\pr[1]{\left(#1\right)}
\newcommand\bra[1]{\left\{ #1\right\}}
\newcommand\set[2]{\bra{ #1\ \left|\ #2\right. }}
\newcommand\ang[1]{\langle#1\rangle}
\newcommand\lex{{\opn{lex}}}
\newcommand\supp{\opn{supp}}
\newcommand\bpol{\opn{\mathsf{b-pol}}}
\newcommand\codim{\opn{codim}}
\newcommand\chara{\opn{char}}
\newcommand\lcm{\opn{lcm}}
\newcommand\Ass{{\opn{Ass}}}
\newcommand\gr{\opn{gr}}
\newcommand\intr{\opn{int}}
\newcommand\ijseq[2][1]{(i_{#1},j_{#1}),\dots ,(i_{#2},j_{#2})}
\title[CW complexes supporting Eliahou-Kervaire type resolutions]{On  CW complexes supporting Eliahou-Kervaire type resolutions of Borel fixed ideals}
\author{Ryota Okazaki}
\thanks{The first author is supported by JSPS KAKENHI Grant Number 20624109}
\address{Faculty of Education, Fukuoka University of Education,
Munakata, Fukuoka 811-4192, Japan}
\email{rokazaki@fukuoka-edu.ac.jp}
\author{Kohji Yanagawa}
\thanks{The second author is supported by JSPS KAKENHI Grant Number 22540057}
\address{Department of Mathematics, Kansai University,
Suita 564-8680, Japan}
\email{yanagawa@kansai-u.ac.jp}
\begin{document}

  \begin{abstract}
We prove that the Eliahou-Kervaire resolution of
a Cohen-Macaulay stable monomial is supported by a regular CW complex 
whose underlying space is a closed ball. 
We also show that the modified Eliahou-Kervaire resolution of
variants of a  Borel fixed  ideal (e.g., a squarefree strongly stable ideal)   
are supported by regular CW complexes, and their underlying spaces are closed balls in the  Cohen-Macaulay case. 
\end{abstract}

\maketitle

\section{introduction}\label{sec:intro}
Throughout this paper, let $\kk$ be a field, and $S$ the polynomial ring
$\kk[x_1,\dots ,x_n]$. 
Free resolutions of monomial ideals of $S$ (or free resolutions of more complicated objects) 
 sometimes admit structure given by CW complexes.
Such resolutions are called {\em cellular resolutions}. 
Since the initiative works by Bayer, Peeva, and Strumfels \cite{BPS, BS},
they have been intensely studied, see for example,  \cite{BaWe,C,M,NR,S, V}.

Let us recall precise definitions. 
For a CW complex $X$, let $X^{(i)}$ denote
the set of the $i$-cells of $X$, and set $X^{(*)} := \bigcup_{i \ge 0} X^{(i)}$.
The set $X^{(*)}$ of all the cells is regarded as the poset with the order 
defined by $c < c'$ if $c$ is contained in the closure of $c'$.
Given an order-preserving map $\gr:X^{(*)} \to \ZZ^n_{\ge 0}$, where
the order of $\ZZ^n_{\ge 0}$ is given by componentwise-comparing,
we construct a $\ZZ^n$-graded chain complex $\cF^{(X,\gr)}_\bullet$ of $S$-free modules as follows:
set $\cF^{(X,\gr)}_i := \bigoplus_{c \in X^{(i)}} S \cdot e(c)$, where $e(c)$ is an $S$-free
base of degree $\gr(c)$, and define the differential map $\partial^{(X,\gr)}_i : \cF^{(X,\gr)}_i \to \cF^{(X,\gr)}_{i-1}$ by
$$
e(c) \longmapsto  \sum_{c' \in X^{(i-1)}} [c:c'] \cdot x^{\gr(c) - \gr(c')} \cdot e(c'), 
$$
where we set $x^{\ba} = \prod_{i=1}^nx_i^{a_i} \in S$ for $\ba = (a_1,\dots ,a_n) \in \ZZ^n_{\ge 0}$ and $[c:c'] \in \ZZ$ denotes the coefficient of $c'$ in the image
of $c$ by the differential map in the cellular homology of $X$.
A $\ZZ^n$-graded $S$-free resolution $\cF_\bullet$ of  some module is said to be {\em cellular} and {\em supported by} $X$
if there exists a CW complex $X$ and a map $\gr: X^{(*)} \to \ZZ^n_{\ge 0}$ such that 
$\cF_\bullet = \cF^{(x,\gr)}_\bullet$. 
The cellularity of $\cF_\bullet$ is a problem on (the existence of) choices of free bases, but this notion arises mysterious phenomena as stated below.   

While there is a monomial ideal whose minimal free resolution cannot be cellular (\cite{V}), 
Batzies and Welker \cite{BaWe} showed that minimal free resolutions of  most of  ``famous" 
monomial ideals admit  cellular structure given through Forman's {\it discrete Morse theory} (\cite{F}). 
However, their approach does not tell us much about the supporting CW complex $X$.  
For example, it is very hard to check that  $X$ is {\it regular}. 
See Definition~\ref{dfn:CW} for the definition of regular CW complexes, but we just remark that the regularity is  a natural requirement from 
combinatorial view point. 

A monomial ideal $I \subset S$ is called {\it Borel fixed}, if $x_i \cdot (\m/x_j) \in I$ for
any monomial $\m \in I$ and $i, j \in \ZZ$ with $i < j$ and $x_j \mid \m$.
Here we use this terminology even if $\chara(\kk) >0$.
Borel fixed ideals are very important, since 
they appear as the generic initial ideals of homogeneous ideals (if $\chara(\kk)=0$). 
A monomial ideal $I$ is called {\em stable}, if
$x_i \cdot (\m/x_k) \in I$ for any $\m \in I$ and $i < k:= \max\set{j}{x_j \text{ divides } \m}$. 
Clearly, Borel fixed ideals are stable.
In their  influential paper \cite{EK}, Eliahou and Kervaire explicitly constructed  minimal free resolutions 
of stable monomial ideals. 
Recently, Mermin \cite{M} and Clark \cite{C} showed that 
the Eliahou-Kervaire resolution (more precisely, their choice of free bases) 
is supported by a regular CW complex.

The first main result of the present paper is the following.  

\medskip

\noindent{\bf Theorem~\ref{thm:EKball}.}
{\it Let $I$ be a Cohen-Macaulay stable monomial ideal. 
Then the Eliahou-Kervaire resolution of $I$ is supported by a regular CW complex whose underlying space is 
a closed ball.}

\medskip

A {\it squarefree strongly stable (monomial) ideal} is a squarefree analog of a Borel fixed ideal, and also important in combinatorial 
commutative algebra (c.f. \cite{AHH2}). For a Borel fixed ideal $I \subset S$, we have the corresponding  
squarefree strongly stable ideal $I^\gs$ of a larger polynomial ring 
$T=\kk[x_1, \ldots, x_N]$ with $N \gg 0$, and any squarefree strongly stable ideal is given in this way. 

Let $I$ be a Borel fixed ideal,
and $G(I)$ the minimal set of monomial generators of $I$. 
Take $d \in \NN$ so that $\deg (\m) \le d$ for all $\m \in G(I)$, and 
set $$\wS :=\kk[x_{i,j} \mid 1\le i \le n,\ 1\le j\le d].$$
Consider the subsets  
$\Theta := \{ x_{i,1}-x_{i,j} \mid 1 \leq i \leq n, \, 2 \leq j \leq d \, \}$ and  
$\Theta' := \{ \, x_{i,j} -x_{i+1, j-1} \mid 1 \le i <n,   1< j \le d \, \}$ of $\wS$. 
For $\m \in G(I)$ such that
$\m = \prod_{i=1}^{\deg(\m)}x_{\alpha_i}$ with
$\alpha_1 \le \alpha_2 \le \dots \le \alpha_{\deg(\m)}$, we set
$$
\bpol(\m) := \prod_{i=1}^{\deg(\m)}x_{\alpha_i,i} \in \wS. 
$$
Now we have  the non-standard polarization 
$$
\bpol(I) := (\ \bpol(\m) \mid \m \in G(I)\ ) \subset \wS
$$
 of $I$. 
For example, if $I$ is a Borel fixed ideal  $(x_1^2, x_1x_2, x_2^3)$, 
then we have  $\bpol(I) = (x_{1,1}x_{1,2}, x_{1,1}x_{2,2},  x_{2,1}x_{2,2}x_{2,3})$ and 
$I^\gs = (x_1x_2, x_1x_3, x_2x_3x_4)$.

In \cite{Y}, generalizing results of Nagel and Reiner \cite{NR}, the second 
author showed that both $\Theta$ and $\Theta'$ form  regular sequences of $\wS/\bpol(I)$,  and $\wS/(\theta) \otimes_{\wS} I \cong I$ 
(resp.  $\wS/(\theta') \otimes_{\wS} I \cong I^\gs$) as $S$-modules 
(resp. $T$-modules)  
via the ring isomorphism $\wS/(\theta) \cong S$ (resp.  $\wS/(\theta') \cong T$).  In particular, $\bpol(I)$ is actually a polarization. 
However, the Eliahou-Kervaire resolution of $I$ cannot be lifted to $\bpol(I)$ directly. 
 
In the previous paper \cite{OY}, generalizing  \cite{NR}, we explicitly constructed the minimal free
resolution $\wP_\bullet$ of $\bpol(I)$, 
and showed that $\wP_\bullet$ is supported by a CW complex given by the discrete Morse theory. 
Note that  $\wS/(\theta) \otimes_{\wS} \wP_\bullet$ and $\wS/(\theta') \otimes_{\wS} \wP_\bullet$ are  
minimal free resolutions of $I$ itself and  $I^{\gs}$ respectively, and both are supported by the same CW complex as $\wP_\bullet$. 
We call these resolutions of $I$, $I^\gs$ and $\bpol(I)$ the {\it modified Eliahou-Kervaire resolutions}. 
While we could not show the regularity of the support of $\wP_\bullet$ in \cite{OY}, we will prove the following. 

\medskip

\noindent{\bf Corollary~\ref{cor:EKtype_reg} and Theorem~\ref{thm:ball}.}
{\it Let $I \subset S$ be a Borel fixed ideal, $\bpol(I)$ its polarization defined above, and 
$\wP_\bullet$ the modified Eliahou-Kervaire resolution of $\bpol(I)$. 
Then $\wP_\bullet$ is supported by a regular CW complex $X$. 
Moreover, if $S/I$ is Cohen-Macaulay, then the underlying space of $X$ is a closed ball. 

Clearly, the modified Eliahou-Kervaire resolutions of $I$ and $I^\gs$ have the same property (see \S6). }

\medskip

For the proofs of all main results of the present paper, Clark's idea using the EL shellability plays a key role. 
\section{Preliminaries}\label{sec:prel}

Throughout this section, $I$ denotes a stable monomial ideal of $S$
(see Section \ref{sec:intro} for the definition).
We shall recall the construction of the Eliahou-Kervaire resolution of $I$.
Following usual notation, for a monomial $\m$ of $S$,
we set $\supp(\m) := \set{i}{x_i \text{ divides } \m}$, $\max(\m) := \max(\supp(\m))$,
and $\min(\m) := \min(\supp(\m))$.

\begin{lem}[{\cite[Lemma1.1]{EK}}]\label{lem:g}
For any monomial $\m \in I$, there exists a unique $\m_0 \in G(I)$ dividing $\m$
such that
$$
\max(\m_0) \le \min(\m/\m_0).
$$
\end{lem}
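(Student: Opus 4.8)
The plan is to extract both existence and uniqueness from a single combinatorial description of the divisors $w\mid\m$ satisfying $\max(w)\le\min(\m/w)$, which turn out to be rigidly constrained.

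\textbf{Step 1 (the shape of the admissible divisors).} First I would fix a monomial $\m$ of degree $d$, list its variables with multiplicity in weakly increasing order, $\m=x_{\ga_1}\cdots x_{\ga_d}$ with $\ga_1\le\cdots\le\ga_d$, and for $0\le t\le d$ set $\m^{[t]}:=x_{\ga_1}\cdots x_{\ga_t}$ (so $\m^{[0]}=1$ and $\m^{[d]}=\m$), using throughout the convention $\min(1)=+\infty$, which makes the displayed inequality vacuous when $\m_0=\m$. The claim to establish is: a monomial $w$ dividing $\m$ satisfies $\max(w)\le\min(\m/w)$ \emph{if and only if} $w=\m^{[t]}$ with $t=\deg w$. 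The ``if'' direction is immediate, since $\max(\m^{[t]})=\ga_t\le\ga_{t+1}=\min(\m/\m^{[t]})$. For ``only if'', put $k:=\max(w)$ and observe that if a copy of some $x_i$ with $i<k$ occurring in $\m$ lay in $\m/w$ rather than in $w$, then $\min(\m/w)\le i<k=\max(w)\le\min(\m/w)$, absurd; hence $w$ contains every copy of $x_1,\dots,x_{k-1}$ present in $\m$, contains no $x_i$ with $i>k$, and contains at least one $x_k$. Writing $v_i(\cdot)$ for the exponent of $x_i$, this fixes $v_i(w)$ for $i\ne k$ and then forces $v_k(w)=\deg w-\sum_{i<k}v_i(\m)$, so $w$ is precisely the product of the $\deg w$ smallest variables of $\m$, i.e.\ $w=\m^{[\deg w]}$. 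The consequence I want is that the admissible divisors of $\m$ are exactly $\m^{[1]},\dots,\m^{[d]}$ and they form a divisibility chain $\m^{[1]}\mid\m^{[2]}\mid\cdots\mid\m^{[d]}=\m$.

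\textbf{Step 2 (uniqueness, then existence).} Uniqueness then follows at once: since the admissible divisors of $\m$ are totally ordered by divisibility, at most one of them is a minimal generator of $I$ --- if $\m^{[s]},\m^{[s']}\in G(I)$ with $s\le s'$, then $\m^{[s]}\mid\m^{[s']}$ and minimality forces $\m^{[s]}=\m^{[s']}$. For existence I would run a minimality argument resting on stability. Among the nonempty finite set of $\mu\in G(I)$ dividing $\m$, I would pick $\m_0$ of least degree, and among those pick $\m_0$ least in the total order $\prec$ on monomials of degree $\deg(\m_0)$ defined by $u\prec v$ iff $v_i(u)<v_i(v)$ at the largest index $i$ where $u$ and $v$ disagree. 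Then I would show $\max(\m_0)\le\min(\m/\m_0)$: if not, put $j:=\min(\m/\m_0)<k:=\max(\m_0)$ (here $\m/\m_0\ne1$, otherwise $\min(\m/\m_0)=+\infty$); then $x_j\mid\m/\m_0$, so $x_j\m_0\mid\m$, and since $j<k=\max(\m_0)$ with $x_k\mid\m_0$, stability gives $\m_1:=x_j\m_0/x_k\in I$, with $\deg\m_1=\deg\m_0$ and $\m_1\mid x_j\m_0\mid\m$. Choosing $\mu_1\in G(I)$ with $\mu_1\mid\m_1$, we get $\mu_1\mid\m$ with $\deg\mu_1\le\deg\m_1=\deg\m_0$, so degree-minimality gives $\deg\mu_1=\deg\m_0$ and hence $\mu_1=\m_1\in G(I)$; but $\m_1$ and $\m_0$ have equal exponents outside $\{j,k\}$ and $v_k(\m_1)=v_k(\m_0)-1$, so $\m_1\prec\m_0$, contradicting the choice of $\m_0$. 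Together with Step 2 this is the assertion.

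\textbf{The crux.} The substantive step is the ``only if'' part of Step 1: one has to reason with exponent vectors, not just supports, to see that an admissible divisor is completely determined by its degree --- and it is exactly this rigidity, i.e.\ the chain structure, that makes uniqueness free. The existence half is softer but genuinely uses stability: $\m_1$ need not lie in $I$ for an arbitrary monomial ideal, and the lemma is false without the hypothesis. I would also steer clear of the tempting shortcut ``take the least $s$ with $\m^{[s]}\in I$ and argue $\m^{[s]}\in G(I)$'': a merely stable ideal need not be closed under replacing a monomial by a termwise-smaller one of the same degree (for instance $x_3x_5$ can belong to a stable ideal while $x_2x_4$ does not), so the obvious justification of that claim collapses, whereas the $\prec$-minimality argument above uses the stability axiom only once.
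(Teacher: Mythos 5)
Your proposal is correct. Note, however, that the paper itself offers no proof of this statement: it is imported verbatim from Eliahou--Kervaire \cite[Lemma 1.1]{EK}, so there is no in-paper argument to compare yours against. Your two ingredients are both sound: the classification of the divisors $w\mid\m$ with $\max(w)\le\min(\m/w)$ as the initial segments $\m^{[t]}$ of the weakly increasing variable list (which yields the divisibility chain and hence uniqueness for free, since two distinct minimal generators cannot divide one another), and the exchange argument for existence, where stability is applied exactly once to the degree-minimal, $\prec$-minimal generator dividing $\m$ to produce a smaller generator of the same degree dividing $\m$, a contradiction. This is in the same spirit as the original Eliahou--Kervaire argument, and your closing caveat is fair as stated: the shortcut ``take the least $s$ with $\m^{[s]}\in I$'' is not outright false for stable ideals (it can be repaired with one application of stability, moving a top variable of a generator dividing $\m^{[s]}$ down to exhibit $\m^{[s-1]}\in I$), but, as you say, the naive justification via termwise-smaller replacement does fail, so avoiding it costs nothing.
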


Following \cite{EK}, let $g^I$, or simply $g$, denotes the function
that sends any monomial $\m \in I$ to $\m_0 \in G(I)$.
A pair $(F,\m)$ of a subset $F \subset \NN$ and a monomial $\m \in G(I)$ is said to be {\em admissible for $I$} if $F = \void$, or otherwise
$F = \bra{i_1,\dots,i_q}$ with
$$
1 \le i_1 < \dots < i_q < \max(\m).
$$

Following usual convention, for $F \subseteq \bra{1,\dots ,n}$,
let $x_F$ denote the monomial $\prod_{i \in F}x_i$.
For $q \ge 0$, set
$$
A^I_q := \set{(F,\m)}{\#F = q,\ \text{$(F,\m)$ is admissible}}.
$$
For $F = \bra{i_1,\dots ,i_q}$, let $F_{\ang{i_r}}$ denote the set $F \setminus \bra{i_r}$,
and for $\m \in G(I)$ such that $(F, \m) \in A^I_q$, set
$$
B^I(F,\m) := \set{i_r}{(F_{\ang{i_r}},g(x_{i_r}\m)) \in A^I_{q-1}}.
$$
The $q$-th component of the Eliahou-Kervaire resolution $P_\bullet$ of $I$ is
$$
P_q := \bigoplus_{(F,\m) \in A^I_q} S \cdot e(F,\m),
$$
where $e(F,\m)$ is the $S$-free basis with the same multi-degree (with respect to $\ZZ^n$-grading) as $x_F \cdot \m$.
The differential maps $\partial: P_q \to P_{q-1}$ are defined as follows: for $(F,\m) \in A^I_q$ with $F = (i_1,\dots ,i_q)$ and $q \ge 1$,
$$
\partial(e(F,\m)) := \sum_{r=1}^q (-1)^r x_{i_r} \cdot e(F_{\ang{i_r}},\m)
  - \sum_{i_r \in B^I(F,\m)} (-1)^r \frac{x_{i_r}\m}{g(x_{i_r}\m)} \cdot e(F_{\ang{i_r}},g(x_{i_r}\m)).
$$

\begin{thm}[{\cite[Theorem~2.1]{EK}}]
The above $P_\bullet$ is a minimal $\ZZ^n$-graded free resolution of $I$.
\end{thm}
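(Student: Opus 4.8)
The plan is to check three things about $P_\bullet$ in turn: that it is a complex ($\partial\circ\partial=0$); that it is minimal, i.e.\ every entry of every differential matrix lies in the maximal ideal $\mathfrak{m}=(x_1,\dots,x_n)$; and that it is acyclic in positive homological degree with $H_0(P_\bullet)\cong I$ via the augmentation $e(\void,\m)\mapsto\m$.

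Minimality is the quick part, which I would dispose of first. For $(F,\m)\in A^I_q$ with $F=\{i_1<\dots<i_q\}$, every coefficient occurring in $\partial(e(F,\m))$ is either $\pm x_{i_r}\in\mathfrak{m}$, or else $\pm\, x_{i_r}\m/g(x_{i_r}\m)$ for some $i_r\in B^I(F,\m)$. In the latter case $\m\in I$ is a proper divisor of $x_{i_r}\m$, so the element $g(x_{i_r}\m)\in G(I)$, which divides $x_{i_r}\m$, is a proper divisor of it as well --- otherwise $x_{i_r}\m$ would be a non-minimal generator of $I$ --- and hence $x_{i_r}\m/g(x_{i_r}\m)$ is a non-constant monomial, again lying in $\mathfrak{m}$.

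Next I would show $P_\bullet$ is a complex. One expands $\partial^2(e(F,\m))$ and sorts the terms by the ordered pair of indices deleted from $F$ and by whether a ``$g$-substitution'' is carried out at the first deletion, the second, or both. The terms with no substitution cancel in pairs exactly as in the ordinary Koszul complex. For the remaining ones one needs a short list of combinatorial identities for the function $g$ of Lemma~\ref{lem:g}: that $g(x_i\m)$ is determined by $x_i\cdot g(\m)$ once $i<\max(g(\m))$; a ``diamond'' identity comparing $g(x_i\cdot g(x_j\m))$ with $g(x_j\cdot g(x_i\m))$; and the induced compatibilities of the sets $B^I(F,\m)$. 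Each of these follows by unwinding the inequality $\max(\m_0)\le\min(\m/\m_0)$ characterising $\m_0=g(\m)$, after which the signs cancel in pairs. I expect this --- together with the final matching step of the acyclicity argument below, which needs the same $g$-combinatorics --- to be the main obstacle: no single identity is deep, but the case analysis and sign bookkeeping are the technical core of the theorem.

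Finally, acyclicity. I would argue by induction on $\#G(I)$ using iterated mapping cones. Order $G(I)=\{\m_1,\dots,\m_r\}$ with $\deg\m_1\le\dots\le\deg\m_r$, breaking ties within each degree so that at every stage the truncation $I':=(\m_1,\dots,\m_{r-1})$ is again a stable ideal; such orders exist, and checking it is routine. The place where stability enters essentially is the identity $(I':\m_r)=(x_1,\dots,x_{\max(\m_r)-1})$: stability gives $x_i\m_r\in I$ for every $i<\max(\m_r)$, and since $i<\max(\m_r)$ forces $g(x_i\m_r)\ne\m_r$, we get $g(x_i\m_r)\in G(I')$ and hence $x_i\m_r\in I'$; that there are no further colon generators is a short direct check. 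Consequently $S/(I':\m_r)$ is resolved by the Koszul complex $K_\bullet(x_1,\dots,x_{\max(\m_r)-1})$, and the exact sequence $0\to(S/(I':\m_r))(-\deg\m_r)\xrightarrow{\cdot\m_r}S/I'\to S/I\to0$ realises a resolution of $S/I$ as the mapping cone of a comparison map $\phi\colon K_\bullet(x_1,\dots,x_{\max(\m_r)-1})(-\deg\m_r)\to(\text{resolution of }S/I')$. By induction the target is the Eliahou-Kervaire complex of $I'$ augmented by a copy of $S$ in homological degree $0$. One then checks that $\phi$ can be chosen with monomial entries, and indeed so that the cone --- after discarding that extra copy of $S$ --- is \emph{exactly} $P_\bullet$, the $B^I$-and-$g$ summand of $\partial$ being precisely $\phi$. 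Since $P_\bullet$ is already known to be minimal, the cone admits no cancellation, so $P_\bullet$ is the minimal $\ZZ^n$-graded free resolution of $I$. The base case $r=1$ forces $I=(x_1^a)$, resolved by $0\to S(-a)\to0$. (Alternatively, exactness can be obtained by exhibiting an explicit contracting homotopy in positive degrees, which is essentially the route taken by Eliahou and Kervaire.)
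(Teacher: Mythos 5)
The paper gives no proof of this statement: it is quoted from \cite{EK}, so the only comparison available is with the original Eliahou--Kervaire argument (a direct verification that $\partial^2=0$ together with a hands-on proof of exactness) and with the linear-quotients proof by iterated mapping cones in the style of Herzog--Takayama \cite{HT}, which this paper itself alludes to in the remark in Section~\ref{sec:EKball} identifying $g$ with the decomposition function of $I$ for the ordering of $G(I)$ by decreasing $\prec$. Your plan is exactly this second route, and its skeleton is sound: the minimality argument is correct, the base case is right, and a degree-increasing order of $G(I)$ with all truncations stable does exist (within each degree remove the $\prec$-smallest generator first: if $x_i\n/x_{\max(\n)}$ were divisible by no generator other than the removed $\m_r$, then $\n=\m_r x_{\max(\n)}/x_i$ would be a generator of the same degree with $\n\prec\m_r$, contradicting the choice).

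That said, the steps you label ``routine'' or ``a short direct check'' are where the substance lies, and two of them need genuine arguments that the sketch does not supply. First, the inclusion $(I':\m_r)\subseteq(x_1,\dots,x_{\max(\m_r)-1})$ is false for an arbitrary removable generator (take $I=(x_1,x_2^2)$ and $\m_r=x_1$); its proof must use both that $\deg \m_r$ is maximal and that $I'$ is stable, e.g.\ by using stability of $I'$ to replace every variable of index greater than $k:=\max(\m_r)$ in a witness monomial by $x_k$, and then playing the bound $\deg\le\deg\m_r$ for generators of $I'$ against the minimality of $\m_r$ in $I$ and the fact that $\m_r\notin I'$. Second, for the cone to be \emph{exactly} $P_\bullet$ you also need the Eliahou--Kervaire complex of $I'$ to sit inside $P_\bullet$ as a subcomplex, i.e.\ $g^{I}(x_i\m)\neq\m_r$ for every $\m\in G(I')$ and $i<\max(\m)$; otherwise some differential from an $I'$-labelled basis element would leak into the Koszul part. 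This does hold --- $g^I(x_i\m)=\m_r$ would force $\deg\m=\deg\m_r$ and $x_i\m=\m_r x_j$ with $j=\max(\m)\ge k$, so $x_i\m/x_{\max(\m)}=\m_r\notin I'$, contradicting stability of $I'$ --- but it is nowhere addressed in your proposal. Finally, the $g$-identities and sign bookkeeping behind $\partial^2=0$, which you explicitly defer, are the same computation needed to check that your comparison map $\phi$ is a chain map lifting multiplication by $\m_r$; until that is written out, what you have is a correct and completable plan rather than a proof.
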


In his paper \cite{M}, Mermin showed that the Eliahou-Kervaire resolutions are cellular
and supported by regular CW complexes.
Clark \cite{C} also proved this result by detecting {\em EL-shellability}
of each interval of the poset associated to the resolution.
In Section \ref{sec:regular}, we will make use of his technique to show the
modified Eliahou-Kervaire resolution of $\bpol(I)$ for a Borel fixed ideal $I$ is supported by
a regular CW complex. That's why we will introduce the argument by Clark
in the below.

First, let us recall the basic notion and properties of a simplicial complex.
By definition, an (abstract) simplicial complex
$\gD$ on the finite vertex set $V$ is a subset of the power set $2^V$
which is closed under taking subsets (i.e., for $\gs$, $\gt \subset V$,
$\gs \in \gD$ and $\gt \subseteq \gs$ imply $\gt \in \gD$).
The elements of the simplicial complex $\gD$ are called the {\em faces} of $\gD$,
and the {\em dimension} of a face $\gs \in \gD$ is defined to be
$\# \gs - 1$. The dimension of a face $\gs$ is denoted by $\dim \gs$.
A face of dimension $d$ is called a $d$-face.
The {\em dimension} $\dim \gD$ of $\gD$ is, by definition,
the maximum of the dimensions of the faces of $\gD$.
The faces of $\gD$ which are maximal with respect to inclusion
are called the {\em facets} of $\gD$. The set of the facets of $\gD$
is denoted by $\sF(\gD)$. Clearly $\sF(\gD)$ characterizes
$\gD$ completely. If all the facets of $\gD$ have the same dimension,
then $\gD$ is said to be {\em pure}.
For a subset $\bra{\gs_1,\dots ,\gs_r}$ of $2^V$, let $\ang{\gs_1,\dots ,\gs_r}$
denote the smallest simplicial complex containing $\bra{\gs_1,\dots ,\gs_r}$.
When $\gs = \bra{v}$ for some $v \in V$, the simplicial complex $\ang{\gs}$
(the $0$-simplex on $\bra{v}$)
is simply written as $\ang{v}$.

A pure simplicial complex $\gD$ with $\dim \gD = d$ is said to be {\em shellable} if
there exists a linear ordering $\gs_1,\dots ,\gs_r$ on $\sF(\gD)$ such that
the intersection $\ang{\gs_1,\dots ,\gs_{i-1}} \cap \ang{\gs_i}$ is pure of dimension $d-1$
for all $i$ with $2 \le i \le r$.
Though shellability can be defined for non-pure simplicial complexes as is known well,
in this paper, we do not need it.
Thus we refer to pure shellable simplicial complexes just as shellable ones.
See \cite{St} for the general definition and the other equivalent conditions of shellability.

It is well known that a shellable simplicial complex $\gD$ is {\em Cohen-Macaulay}
over any field $\kk$, i.e., its Stanley-Reisner ring $\kk[\gD]$ is Cohen-Macaulay
for any field $\kk$ (see \cite{BH, St} for details).
Constructible simplicial complexes are those in the hierarchy between
shellability and Cohen-Macaulay-ness. A {\em constructible} simplicial complex
$\gD$ is the one obtained by the following recursive procedure:
\begin{enumerate}
\item any simplex, i.e., a simplicial complex with the only one facet,
is constructible:
\item for any two constructible simplicial complexes $\gD$ and $\gD'$ of dimension $d$,
if the intersection $\gD \cap \gD'$ is constructible of dimension $d-1$, then
the union $\gD \cup \gD'$ is also constructible (of dimension $d$).
\end{enumerate}
It is easy to verify that a shellable simplicial complex is constructible.

We will make use of the following proposition to show a given simplicial complex
is homeomorphic to a closed ball.

\begin{prop}[cf. {\cite[Theorem 11.4]{B95}, \cite[Proposition 1.2]{DK}}]\label{prop:balllem}
A geometric realization of a $d$-dimensional simplicial complex $\gD$ is homeomorphic to a closed ball if $\gD$ satisfies the following conditions:
\begin{enumerate}
\item $\gD$ is constructible: \label{enum:const}
\item every $(d-1)$-face is contained in at most two $d$-faces: \label{enum:atmost2}
\item there exists a $(d-1)$-face contained in only one $d$-face. \label{enum:only1}
\end{enumerate}
\end{prop}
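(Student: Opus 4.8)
The plan is to prove this by induction on the number of facets of $\gD$, following the constructive definition of constructibility. The base case is when $\gD$ has a single facet, i.e.\ $\gD$ is a $d$-simplex; then its geometric realization is manifestly a closed $d$-ball, and conditions~\eqref{enum:atmost2} and \eqref{enum:only1} hold trivially. For the inductive step, by the definition of constructibility we may write $\gD = \gD' \cup \gD''$ where $\gD'$, $\gD''$ are constructible of dimension $d$ with fewer facets than $\gD$, and $\gD' \cap \gD''$ is constructible of dimension $d-1$. The goal is to arrange that the inductive hypothesis applies to $\gD'$ and $\gD''$, and that the intersection $\gD' \cap \gD''$ is itself a $(d-1)$-ball sitting nicely inside the boundaries of $\gD'$ and $\gD''$, so that the gluing lemma for balls (two $d$-balls glued along a $(d-1)$-ball in their boundaries yield a $d$-ball) produces the conclusion.

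The key steps, in order, are: (i) observe that conditions~\eqref{enum:atmost2} and \eqref{enum:only1} are inherited ``pseudomanifold with boundary''-type conditions and verify they pass to $\gD'$ and $\gD''$ — each $(d-1)$-face of $\gD'$ that is interior to $\gD'$ but on $\gD' \cap \gD''$ may become a free face, but no $(d-1)$-face can acquire a third $d$-face; (ii) by induction conclude that $\gD'$ and $\gD''$ are closed $d$-balls; (iii) show that $\gD' \cap \gD''$, being constructible of dimension $d-1$ and (by the counting condition) a subcomplex whose $(d-1)$-faces each lie in exactly one $d$-face of $\gD'$ and one of $\gD''$, is pure of dimension $d-1$ and satisfies the hypotheses of the proposition in dimension $d-1$, hence by the outer induction (on dimension) is a closed $(d-1)$-ball; (iv) check that $\gD' \cap \gD''$ lies in the boundary sphere $\partial\gD'$ and in $\partial\gD''$ — this is where condition~\eqref{enum:atmost2} is used, since a $(d-1)$-face in $\gD' \cap \gD''$ belongs to one $d$-face on each side, hence after gluing to two $d$-faces total, so it is interior in $\gD$ but was a boundary $(d-1)$-face of each piece; (v) invoke the standard fact that gluing two $d$-balls along a $(d-1)$-ball in each of their boundary spheres yields a $d$-ball, and that condition~\eqref{enum:only1} guarantees the result genuinely has nonempty boundary (it is not a sphere). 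The double induction — an outer induction on $d$ feeding the treatment of the $(d-1)$-dimensional intersection, and an inner induction on the number of facets — needs to be set up carefully so the logic is not circular.

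The main obstacle I anticipate is step~(iii)–(iv): controlling the topology of the intersection $\gD' \cap \gD''$. Constructibility of the intersection is given, and the dimension is given, but to invoke the proposition recursively in dimension $d-1$ one must verify that every $(d-2)$-face of $\gD' \cap \gD''$ lies in at most two $(d-1)$-faces of it, and that some $(d-2)$-face lies in only one — these are not automatic from the hypotheses on $\gD$ and require a local analysis of links. A cleaner route, which I would pursue if the direct argument gets tangled, is to cite the literature precisely: this is essentially Björner's result on constructible pseudomanifolds (the reference \cite{B95}), and one can quote that a constructible complex which is a pseudomanifold with boundary (conditions~\eqref{enum:atmost2} and \eqref{enum:only1} encode exactly this) is PL-homeomorphic to a ball, so that the inductive bookkeeping about the intersection is absorbed into the cited theorem. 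Since the proposition is stated with a ``cf.'' pointing to \cite[Theorem~11.4]{B95} and \cite[Proposition~1.2]{DK}, the expected proof is short: reduce to those statements by checking that conditions~\eqref{enum:atmost2}–\eqref{enum:only1} are precisely the pseudomanifold-with-boundary hypotheses needed, and let the induction run inside the cited result rather than reproving it here.
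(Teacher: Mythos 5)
The paper gives no proof of this proposition at all: the ``cf.''\ in the statement signals that it is being quoted from the literature (Bj\"orner's Handbook chapter, Theorem~11.4, and Danaraj--Klee), and the paper simply uses it. So your fallback plan --- check that conditions \eqref{enum:atmost2} and \eqref{enum:only1} are exactly the pseudomanifold-with-boundary hypotheses and let the cited results do the work --- is precisely what the paper does, and as a proof of the proposition in the context of this paper it is adequate.

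Your primary plan, the self-contained double induction, has a genuine gap beyond the one you flag. The statement you induct on (``constructible $+$ \eqref{enum:atmost2} $+$ \eqref{enum:only1} $\Rightarrow$ ball'') is too weak to serve as an inductive hypothesis: the pieces $\gD'$, $\gD''$ and especially the intersection $\gD' \cap \gD''$ need not satisfy condition \eqref{enum:only1} --- the intersection can perfectly well be a $(d-1)$-sphere (and then $\gD$ is a $d$-sphere, which is only excluded for $\gD$ itself by \eqref{enum:only1}, not for the subcomplexes arising in the recursion). The correct inductive statement is Bj\"orner's: constructible plus \eqref{enum:atmost2} implies ball \emph{or} sphere, with \eqref{enum:only1} used only at the very end to rule out the sphere. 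Moreover, even with that fix, step (iii) still requires showing that $\gD' \cap \gD''$ satisfies the at-most-two condition in dimension $d-1$ (a local link argument that is not automatic from \eqref{enum:atmost2} for $\gD$), step (iv) requires knowing $\gD' \cap \gD''$ lies in $\partial \gD'$ and $\partial \gD''$, and step (v) rests on nontrivial PL facts (the gluing lemma for balls and, in the sphere case, Newman's theorem that the closed complement of a ball in a sphere is a ball). None of these are elementary, which is why the paper does not reprove the result but cites it; if you want a self-contained argument you would essentially be reproducing the proof of \cite[Theorem 11.4]{B95} rather than giving a shortcut.
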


For a simplicial complex $\gD$ and a new vertex $v$,
the {\em join} $\gD \ast \ang{v}$ is, by definition, the subset
$\gD \cup \set{\gs \cup \bra{v}}{\gs \in \gD}$ of $2^{V \cup \bra{v}}$.
Clearly, it becomes a simplicial complex on $V \cup \bra{v}$.
Though the following may be known well, we give its proof for completeness.

\begin{lem}\label{lem:const_lem}
Let $\gD$ be a simplicial complex, $v$ a new vertex.
If the join $\gD \ast \ang{v}$ is constructible, then so is $\gD$ itself.
\end{lem}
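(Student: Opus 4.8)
The plan is to prove the contrapositive-style claim directly by building a constructible decomposition of $\gD$ out of a given constructible decomposition of $\gD \ast \ang{v}$, using induction on the number of steps in the recursive procedure defining constructibility. First I would set up the induction: suppose $\gD \ast \ang{v}$ is constructible, and argue by induction on the length of a fixed recursive construction of $\gD \ast \ang{v}$. In the base case $\gD \ast \ang{v}$ is a simplex, say $\ang{\gt}$; then either $v \in \gt$, in which case $\gD = \ang{\gt \setminus \bra{v}}$ is a simplex and hence constructible, or $v \notin \gt$, which cannot happen since $v \in \gD \ast \ang{v}$ forces $v$ to lie in some facet.

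For the inductive step, write $\gD \ast \ang{v} = \gL \cup \gL'$ where $\gL, \gL'$ are constructible of dimension $d' = \dim(\gD \ast \ang{v}) = \dim \gD + 1$ and $\gL \cap \gL'$ is constructible of dimension $d' - 1$, each built by a shorter recursion. The key observation is that the operation of "deleting the vertex $v$" — more precisely, passing from a subcomplex $\gE \subseteq \gD \ast \ang{v}$ to the subcomplex $\set{\gs \setminus \bra{v}}{\gs \in \gE}$, or equivalently to the smallest simplicial complex on $V$ containing $\set{\gs}{\gs \in \gE,\ v \notin \gs} \cup \set{\gs \setminus \bra v}{\gs \in \gE,\ v \in \gs}$ — needs to interact well with constructibility, dimension, and intersections. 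The cleanest route is to note that $\gD \ast \ang v$ is a cone with apex $v$, so every facet contains $v$; hence each of $\gL$, $\gL'$ should also (after possibly discarding lower-dimensional junk, which constructible pure complexes do not have) be cones with apex $v$, say $\gL = \gD_1 \ast \ang v$ and $\gL' = \gD_2 \ast \ang v$ with $\gD = \gD_1 \cup \gD_2$ and $\gD_1 \cap \gD_2 = $ the "de-coning" of $\gL \cap \gL'$. Then applying the induction hypothesis to $\gL = \gD_1 \ast \ang v$ and $\gL' = \gD_2 \ast \ang v$ gives that $\gD_1$ and $\gD_2$ are constructible of dimension $d = \dim \gD$, and applying it to $\gL \cap \gL'$ gives that $\gD_1 \cap \gD_2$ is constructible of dimension $d - 1$; by the recursive rule, $\gD = \gD_1 \cup \gD_2$ is constructible.

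The main obstacle I anticipate is justifying that the pieces $\gL, \gL'$ appearing in a recursive construction of the cone $\gD \ast \ang v$ may themselves be taken to be cones with apex $v$. A priori a constructive decomposition of a cone could use intermediate complexes that are not cones; one must verify that since every facet of $\gD \ast \ang v$ contains $v$ and $\gL, \gL'$ are pure of the full dimension $d'$, every facet of $\gL$ and of $\gL'$ contains $v$, which forces $\gL = (\gL|_V) \ast \ang v$ where $\gL|_V$ denotes the restriction to the non-$v$ vertices — and similarly for $\gL \cap \gL'$, whose facets, being intersections of a facet of $\gL$ with a facet of $\gL'$ both containing $v$, again all contain $v$. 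Establishing this "every facet contains the apex, hence the complex is a cone" lemma carefully (including that purity is inherited through the recursion, which follows since constructible complexes are pure by construction) is the technical heart; once it is in place, the de-coning map is a poset isomorphism compatible with unions and intersections, and the induction closes routinely.
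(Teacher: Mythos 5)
Your proposal is correct and takes essentially the same route as the paper: both decompose the cone $\gD \ast \ang{v}$ into two constructible pieces, observe that every facet of each piece and of their intersection contains $v$ (using purity of constructible complexes), hence each is itself a cone over a subcomplex of $\gD$, then de-cone and apply induction to reassemble $\gD$ as a constructible union. The only difference is bookkeeping: you induct on the size of the recursive construction, whereas the paper inducts on $\#\sF(\gD)$ and $\dim \gD$.
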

\begin{proof}
Set $d := \dim \gD$.
Note that there exists the following one-to-one corresponding between $\sF(\gD)$
and $\sF(\gD \ast \ang{v})$
\begin{equation}
\sF(\gD) \ni \gs \ \longleftrightarrow\  \gs \cup \bra{v} \in \sF(\gD \ast \ang{v}).
\tag{$\ast$}
\end{equation}
Furthermore the following holds:
$\gU = (\gD \cap \gU) \ast \ang{v}$ for any subcomplex $\gU$ of $\gD \ast \ang{v}$
such that $v \in \gs$ for all $\gs \in \sF(\gU)$.
Indeed, by the hypothesis for $\gU$, 
$\sF(\gD \cap \gU) = \set{\gs \setminus \bra{v}}{\gs \in \sF(\gU)}$,
hence it follows that $\sF((\gD \cap \gU) \ast \ang{v}) = \sF(\gU)$,
which implies $\gU = (\gD \cap \gU) \ast \ang{v}$.

Now assuming $\gD \ast \ang{v}$ is constructible, we shall show $\gD$
is also constructible by the induction on $\#\sF(\gD)$ and $d$.
In the case where $\#\sF(\gD) = 1$ or $d  = 0$, the complex $\gD$ is always
constructible. Assume $\#\sF(\gD) > 1$ and $d > 0$.
By the hypothesis, there exist constructible subcomplexes $\wgD_1$, $\wgD_2$ of
$\gD \ast \ang{v}$ with $\dim \wgD_1 = \dim \wgD_2 = \dim(\gD \ast \ang{v}) = d + 1$
such that $\wgD_1 \cap \wgD_2$ is constructible of dimension $d$.
We set $\gD_i := \gD \cap \wgD_i$. By ($\ast$), every facet of $\wgD_1$ and $\wgD_2$
contains $v$ and hence so does $\wgD_1 \cap \wgD_2$.
Thus $\gD_1 \ast \ang{v} = \wgD_1$, $\gD_2 \ast \ang{v} = \wgD_2$,
and $(\gD_1 \cap \gD_2) \ast \ang{v} = \wgD_1 \cap \wgD_2$
(Note that $\gD_1 \cap \gD_2 = \gD \cap (\wgD_1 \cap \wgD_2)$).
Applying the inductive hypothesis implies $\gD_1$, $\gD_2$ and
$\gD_1 \cap \gD_2$ are constructible.
Clearly $\dim \gD_1 = \dim \gD_2 = d$ and $\dim(\gD_1 \cap \gD_2) = d- 1$.
Therefore $\gD = \gD_1 \cup \gD_2$ is constructible.
\end{proof}

Next, we will recall the basic notion and properties of posets (partially ordered sets).
From here to the end of the paper, unless otherwise specified,
a poset means a {\em finite} poset, i.e., a poset with finite cardinality.
A poset $\gG$ is said to be {\em pure} if every maximal chain in $\gG$
has the same length,
and {\em bounded} if $\gG$ has the greatest element $\hat 1$ and the least one $\hat 0$.
Let $\gD(\gG)$ denote the {\em order complex} of $\gG$, i.e., the simplicial complex
on $\gG$ consisting of the chains in $\gG$,
where each chain is considered as a subset of $\gG$, ignoring the order.
Clearly, $\gG$ is pure if and only if $\gD(\gG)$ is pure.
For elements $\gs, \gt \in \gG$, we set
$$
[\gs, \gt]_{\gG} := \set{\gu \in \gG}{\gs \le \gu \le \gt},
$$
and a subposet as above is called an {\em interval} in $\gG$.
If there is no fear of confusion, the interval $[\gs,\gt]_{\gG}$ is simply
denoted by $[\gs,\gt]$.
Clearly an interval in a pure poset is also pure.
The {\em length} of an interval $[\gs,\gt]$ is by definition the maximum of the length of
the chains in $[\gs,\gt]$.
A pure poset is said to be {\em thin} if 
every interval of length $2$ has cardinality $4$.
A poset $\gG$ is said to be {\em shellable} if so is its order complex $\gD(\gG)$.

There is a well-known method to judge shellability
by means of a labeling on the chains due to Bj\"orner \cite{B80}.
Let $\gG$ be a poset. For $\gs ,\gt \in \gG$,
we will write $\gt \lessdot \gs$ if $\gt < \gs$ and $[\gs,\gt] = \bra{\gs,\gt}$.
We define $\cC^q(\gG)$ to be the set of the unrefinable chains
$c_0 \lessdot c_1 \lessdot \cdots \lessdot c_q$ of length $q$ in $\gG$.
A map $\gl: \cC^1(\gG) \to \gL$, where $\gL$ is some poset, is called
an {\em edge labeling} of $\gG$.
For any positve integer $q$, an edge labeling $\gl$ is extended to the map
from $\cC^q(\gG)$ to $\gL^q$ sending each
$c_0 \lessdot c_1 \lessdot \cdots \lessdot c_q$ to
$$
(\gl(c_0 \lessdot c_1) , \cdots ,\gl(c_{q-1} \lessdot c_q)).
$$
The extended map is also denoted by $\gl$ by abuse of notation.
Though $\gl(c)$ is thus an ordered tuple,
we will use the notation $i \in \gl(c)$ to mean that $i$ appears in $\gl(c)$
for convenience.

For $c,c' \in \cC^q(\gG)$, we write $c <_\lex c'$
whenever $\gl(c) < \gl(c')$ lexicographically with respect to the order on $\gL$.
An unrefinable chain $c_0 \lessdot c_1 \lessdot \cdots \lessdot c_q$ is
said to be {\em increasing} if
$$
\gl(c_0 \lessdot c_1) \le \gl(c_1 \lessdot c_2) \le \cdots \le \gl(c_{q-1} \lessdot c_q).
$$

\begin{dfn}[{\cite[Definition 2.1]{BjWa}}]\label{dfn:EL-labeling}
For a poset $\gL$ and a bounded pure one $\gG$,
an edge labeling $\gl: \cC^1(\gG) \to \gL$
is called an {\em EL-labeling} if for every interval $[\gs ,\gt]$ in $\gG$,
\begin{enumerate}
\item there is a unique increasing maximal chain $c$ in $[\gs,\gt]$, and
\item $c <_\lex c'$ for any other maximal chain $c'$ in $[\gs,\gt]$.
\end{enumerate}
A poset possessing an EL-labeling is said to be {\em EL-shellable}.
\end{dfn}

\begin{prop}[{\cite[Theorem 2.3]{B80}, \cite[Proposition 2.3]{BjWa}}]
A bounded pure poset is shellable if it is EL-shellable.
\end{prop}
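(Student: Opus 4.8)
The plan is to exhibit an explicit \emph{shelling order} on the facets of the order complex $\gD(\gG)$ and deduce shellability from it. Since $\gG$ is bounded and pure, say of length $n$, its maximal chains are exactly the unrefinable chains $\hat 0 = y_0 \lessdot y_1 \lessdot \cdots \lessdot y_n = \hat 1$, and, viewed as $(n{+}1)$-element subsets of $\gG$, these are precisely the facets of $\gD(\gG)$, which is therefore pure of dimension $n$. Extending the given EL-labeling $\gl$ to $\cC^n(\gG) \to \gL^n$ as in the preliminaries, one obtains the lexicographic order $<_\lex$ on maximal chains; I would fix once and for all a linear order $c_1, c_2, \dots, c_t$ on the maximal chains that refines $<_\lex$, and claim this is a shelling of $\gD(\gG)$.

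To verify the claim I would attach to each maximal chain $c : \hat 0 = y_0 \lessdot \cdots \lessdot y_n = \hat 1$ its \emph{descent set}
$$
R(c) := \set{y_i}{1 \le i \le n-1,\ \gl(y_{i-1}\lessdot y_i)\not\le \gl(y_i\lessdot y_{i+1})} \subseteq c,
$$
and establish the key equivalence: for every face $G \subseteq c$ of $\gD(\gG)$, one has $G \subseteq c'$ for some maximal chain $c' <_\lex c$ \emph{if and only if} $R(c) \not\subseteq G$. For the ``if'' direction, pick $y_i \in R(c)\setminus G$; the interval $[y_{i-1},y_{i+1}]$ is pure of length $2$ and the sub-chain of $c$ through $y_i$ is not increasing there, so by the EL property the unique increasing maximal chain of $[y_{i-1},y_{i+1}]$ runs through some $y_i' \ne y_i$ and is strictly lex-below the one through $y_i$; replacing $y_i$ by $y_i'$ in $c$ then yields a maximal chain $c' <_\lex c$ that still contains $G$. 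For the ``only if'' direction, in contrapositive form ($R(c)\subseteq G$ forces $c$ to be the $<_\lex$-first maximal chain containing $G$), I would take any maximal chain $c' \supseteq G$ with $c'\ne c$, let $r$ be least with $y_r\notin c'$ and $s$ least with $s>r$ and $y_s\in c'$; then $y_r,\dots,y_{s-1}\notin c'\supseteq G$, so none lies in $R(c)$, whence the portion $y_{r-1}\lessdot y_r\lessdot\cdots\lessdot y_s$ of $c$ is increasing on $[y_{r-1},y_s]$. By the EL property it is therefore the unique increasing, hence strictly lex-smallest, maximal chain of $[y_{r-1},y_s]$; as $c$ and $c'$ carry identical labels in positions $1,\dots,r-1$ (both restrict to $y_0\lessdot\cdots\lessdot y_{r-1}$ on $[\hat 0, y_{r-1}]$ by purity), this gives $c <_\lex c'$.

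Granting the equivalence, the faces of $c_k$ that lie in no earlier facet are exactly those $G$ with $R(c_k)\subseteq G\subseteq c_k$, so
$$
\ang{c_1,\dots,c_{k-1}} \cap \ang{c_k} = \bigcup_{y\in R(c_k)} \ang{c_k\setminus\bra{y}}.
$$
The first facet $c_1$ is the globally lex-minimal maximal chain, i.e.\ the unique increasing one by the EL property applied to $[\hat 0,\hat 1]$, so $R(c_1)=\void$ and $c_1$ contributes its whole simplex; for $k\ge 2$ the chain $c_k$ is not increasing, so $R(c_k)\ne\void$ and the displayed intersection is a nonempty union of $(n{-}1)$-faces, hence pure of dimension $n-1$. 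This is precisely the shelling condition, so $\gD(\gG)$, and therefore $\gG$, is shellable. I expect the main obstacle to be the ``only if'' half of the equivalence: it is the one place that uses the full strength of the EL-labeling — uniqueness of the increasing chain on an \emph{arbitrary} interval $[y_{r-1},y_s]$, not merely on those of length $\le 2$ — together with the ``first divergence / first reconvergence'' bookkeeping sketched above; the remaining steps are routine.
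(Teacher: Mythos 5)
Your proof is correct, and it is essentially the argument of the sources the paper cites for this statement (the paper itself gives no proof, quoting it from Bj\"orner and Bj\"orner--Wachs): the lexicographic order on maximal chains, refined to a linear order, is a shelling, with the restriction of a facet $c$ given by its descent set $R(c)$, and the two directions of your key equivalence are exactly the standard use of EL-condition (1) on length-$2$ intervals and of conditions (1)--(2) on the interval between the first divergence and first reconvergence of two chains. You also correctly handle the only delicate point, namely that ``earlier in the chosen linear extension'' is not the same as ``lexicographically smaller,'' by proving the stronger statement that when $R(c)\subseteq G$ the chain $c$ is strictly lex-minimal among all maximal chains containing $G$.
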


Now let us recall the definition of a CW complex and its regularity.
Since we treat only a finite CW complex in this paper,
we restrict ourselves to the finite case.
For the definition of a general CW complex, see \cite{LW}.
Let $\BB^d$ denote the $d$-dimensional closed ball,
$\intr(\BB^d)$ its interior, and set $\partial \BB^d := \BB^d \setminus \intr(\BB^d)$.
A subset $\cell$ of a topological space $X$ is called
an (open) {\em $d$-cell} if there exists a $d$-dimensional closed ball $\BB^d$
and a continuous map $f_\cell: \BB^d \to X$ whose restriction to $\intr(\BB^d)$
induces a homeomorphism onto $\cell$.
For such cell $\cell$, we set $\dim \cell := d$ and refer to the continuous map $f_\cell$
as the {\em characteristic map} of $\cell$.

\begin{dfn}\label{dfn:CW}
A {\em finite CW complex} is a Hausdorff space $X$
together with a finite set $X^{(\ast)}$ of cells of $X$ satisfying the following conditions:
\begin{enumerate}
\item $X = \bigcup_{\cell \in X^{(*)}} \cell$:
\item $\cell \cap \cell' = \void$ for $\cell, \cell' \in X^{(\ast)}$ with $\cell \neq \cell'$:
\item for each $\cell \in X^{(d)}$ and its characteristic map $f_\cell$,
$$
f_\cell(\partial \BB^d) \subseteq X^{\le d- 1},
$$
where $X^{(k)} := \set{\cell'' \in X^{(\ast)}}{\dim \cell'' = k}$ and $X^{\le d- 1} := \bigcup_{k \le d- 1} \pr{\bigcup_{\cell'' \in X^{(k)}} \cell''}$.
\end{enumerate}
Moreover if the finite CW complex satisfies the following condition,
then it is said to be {\em regular}:
\begin{enumerate}
\setcounter{enumi}{3}
\item for each $\cell \in X^{(d)}$,
the corresponding characteristic map $f_\cell$ can be chosen to be
a homeomorphism from $\BB^d$ to the closure of $\cell$.
\end{enumerate}
\end{dfn}

From here to the end of the paper, we refer to a finite CW complex simply as a CW complex.

\begin{rem}
In some literatures the empty set $\void$ is considered as
the unique $(-1)$-cell.
Relying on context, we sometimes use this convention.
\end{rem}

The {\em face poset} of a CW complex $(X, X^{(\ast)})$, 
denoted by $\gG_X$, is the poset which is equal to $X^{(\ast)}$ as sets
and whose order is defined as follows: for $\cell, \cell' \in X^{(\ast)}$,
$\cell < \cell'$ if $\cell$ is contained in the closure of $\cell'$.

\begin{prop}[cf. {\cite[Theorem 1.7, Chapter III]{LW}}]\label{prop:faceposet}
For a finite regular CW complex $(X,X^{(\ast)})$ with the $(-1)$-cell $\void$,
(the geometric realization of) the order complex $\gD(\gG_X \setminus \bra{\void})$
is homeomorphic to the underlying space $X$.
\end{prop}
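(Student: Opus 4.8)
The plan is to prove, by induction on the number $N := \#X^{(\ast)}$ of cells of $X$, the following strengthened assertion: there is a homeomorphism $\Phi_X\colon |\gD(\gG_X\setminus\{\void\})|\to X$ such that, for every subcomplex $Y$ of $X$ (in particular for $Y=\overline\cell$, the closure of a cell $\cell$, which is again a regular CW subcomplex), $\Phi_X$ restricts to a homeomorphism $|\gD(\gG_Y\setminus\{\void\})|\to Y$. The result is classical (see, e.g., \cite[Chapter~III]{LW}); I would quote from there the structural facts about regular CW complexes needed below, namely that the closure $\overline\cell$ of any cell is a regular CW subcomplex whose face poset is the interval $[\void,\cell]_{\gG_X}$ and in which $\cell$ is the unique top-dimensional open cell, and that $f_\cell\colon\BB^d\to\overline\cell$ ($d=\dim\cell$) is a homeomorphism. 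The reason for strengthening the statement is the essential difficulty: homeomorphisms produced by independent applications of the inductive hypothesis will not match on overlaps unless the compatibility with subcomplexes is carried along.

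For $N=1$ the complex $X$ is a point and nothing is to be proved. For $N>1$, choose a cell $\cell$ maximal in $\gG_X$ and set $d:=\dim\cell$, $X':=X\setminus\cell$, and $\partial\cell:=\overline\cell\setminus\cell=f_\cell(\partial\BB^d)$. Using that closures of cells are subcomplexes, one checks that $X'$ is a closed regular CW subcomplex of $X$ with cell set $X^{(\ast)}\setminus\{\cell\}$, that $\partial\cell$ is a regular CW subcomplex of $X'$ with fewer than $N$ cells, and that $\partial\cell\cong S^{d-1}$ and $\overline\cell\cong\BB^d$ via $f_\cell$. Topologically $X=X'\cup\overline\cell$ with $X'\cap\overline\cell=\partial\cell$, both pieces closed. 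Combinatorially, since $\cell$ is maximal in $\gG_X$, the poset $\gG_X\setminus\{\void\}$ arises from $\gG_{X'}\setminus\{\void\}$ by adjoining $\cell$ as a new maximal element lying above exactly the cells of $\partial\cell$, so
$$
|\gD(\gG_X\setminus\{\void\})| \;=\; |\gD(\gG_{X'}\setminus\{\void\})|\,\cup\,|\gD(\gG_{\overline\cell}\setminus\{\void\})|,
$$
the two closed subcomplexes meeting exactly in $|\gD(\gG_{\partial\cell}\setminus\{\void\})|$, and $|\gD(\gG_{\overline\cell}\setminus\{\void\})|$ is the cone with apex $\cell$ over $|\gD(\gG_{\partial\cell}\setminus\{\void\})|$ (recall that the order complex of a poset with a greatest element $t$ is the cone over the order complex of the rest, with apex $t$).

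Now I would build $\Phi_X$. The inductive hypothesis applied to $X'$ gives $\Phi_{X'}$, and its restriction $h$ to $|\gD(\gG_{\partial\cell}\setminus\{\void\})|$ is a homeomorphism onto $\partial\cell$ (take $Y=\partial\cell$ in the strengthened statement for $X'$). Coning $h$ off produces a homeomorphism from $|\gD(\gG_{\overline\cell}\setminus\{\void\})|$ onto the cone over $\partial\cell$; since $\partial\cell\cong S^{d-1}$, this cone is homeomorphic to $\BB^d$, and transporting the standard homeomorphism $\BB^d\cong C(S^{d-1})$ (which is the identity on $S^{d-1}$) along $f_\cell$ identifies the cone over $\partial\cell$ with $\overline\cell$ by a homeomorphism that is the identity on $\partial\cell$. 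Composing, we get a homeomorphism $g\colon|\gD(\gG_{\overline\cell}\setminus\{\void\})|\to\overline\cell$ restricting to $h$ on $|\gD(\gG_{\partial\cell}\setminus\{\void\})|$. Since $\Phi_{X'}$ and $g$ agree on that common closed subcomplex, the pasting lemma glues them to a continuous bijection $\Phi_X\colon|\gD(\gG_X\setminus\{\void\})|\to X$, which is a homeomorphism because its source is compact and its target Hausdorff; and the compatibility of $\Phi_X$ with subcomplexes follows from that of $\Phi_{X'}$ and of $g$ by the same gluing. This completes the induction and in particular proves the Proposition.

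I expect the main obstacles to be exactly the two things that need care above: (i) getting the cone homeomorphisms to restrict to the already-constructed maps on the boundary sphere --- this is the point of carrying the subcomplex-compatibility through the induction; and (ii) the structural input about regular CW complexes used as a black box, i.e.\ that the closure of every cell is a regular CW subcomplex homeomorphic to a ball with the expected face poset. Everything else (the cone description of order complexes, the pasting lemma, the compact-to-Hausdorff argument) is routine.
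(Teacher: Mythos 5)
The paper does not actually prove this proposition: it is quoted as a known result, with the reference to Lundell--Weingram \cite[Chapter III, Theorem 1.7]{LW}, and is then used as a black box in the proofs of Theorems \ref{thm:EKball} and \ref{thm:ball}. So there is no in-paper argument to compare against; what you have written is a proof of the classical triangulation theorem itself, and it is correct. Your route is essentially the standard one (the same inductive scheme as in \cite{LW} and in Bj\"orner's work on CW posets): induct on the number of cells, remove a maximal cell $\cell$, note that $\gD(\gG_X\setminus\bra{\void})$ splits as the order complex over $X'=X\setminus\cell$ glued to the simplicial cone with apex $\cell$ over $\gD(\gG_{\partial\cell}\setminus\bra{\void})$, and use regularity ($f_\cell:\BB^d\to\overline\cell$ a homeomorphism, $\partial\cell\cong S^{d-1}$) to cone off the already-constructed homeomorphism on $\partial\cell$. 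Two points deserve emphasis, and you handle both appropriately: (i) the strengthened induction hypothesis (compatibility of $\Phi_X$ with all subcomplexes) is genuinely needed, since otherwise the map produced on $|\gD(\gG_{\partial\cell}\setminus\bra{\void})|$ by the inductive step need not match the cone map on $\overline\cell$; (ii) the argument does rely on the nontrivial structural fact that in a \emph{regular} CW complex the closure of every cell is a subcomplex whose face poset is the interval $[\void,\cell]_{\gG_X}$ --- this is not a consequence of the definition recorded in Definition~\ref{dfn:CW} alone, and citing it from \cite{LW} is the right move (it is more elementary than, and not circular with, the triangulation theorem being proved). Minor boundary cases (a maximal $0$-cell, where $\partial\cell=\void$ and the cone is a point) go through as you indicate, and the injectivity of the glued map follows from $\Phi_{X'}^{-1}(\partial\cell)=|\gD(\gG_{\partial\cell}\setminus\bra{\void})|=g^{-1}(\partial\cell)$, which your compatibility condition supplies.
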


Let $P_\bullet$ be the Eliahou-Kervaire resolution of $I$ as above.
The key idea of the proof that $P_\bullet$ is supported by a regular CW complex
is to show that the poset associated with $P_\bullet$ coincides with
the face poset of some regular CW complex.
Such a poset, i.e., a poset which is isomorphic to a face poset of
some {\em regular} CW complex with the $(-1)$-cell $\void$, is called {\em CW poset},
which is due to Bj\"orner \cite{B84}.

The following is a useful criterion to verify a poset is CW.

\begin{prop}[{\cite[Proposition 2.2]{B84}}]\label{prop:CW}
A thin poset $\gG$ with $\# \gG \ge 2$ is a CW poset if
\begin{enumerate}
\item $\gG$ has a least element $\hat 0$,
\item for any $\gs \in \gG$, the interval $[\hat 0,\gs]$ is finite and shellable.
\label{enum:CW_fin_shell}
\end{enumerate}
\end{prop}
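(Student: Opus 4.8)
The plan is to follow Bj\"orner's original argument \cite{B84}, reconstructing the regular CW complex cell by cell using an induction on the rank. First I would recall the characterization we actually need: a poset $\gG$ with $\hat 0$ is the (augmented) face poset of a regular CW complex precisely when, for every $\gs \in \gG$, the order complex of the open interval $(\hat 0,\gs) = [\hat 0,\gs] \setminus \{\hat 0,\gs\}$ is homeomorphic to a sphere $\BB^{d-1} \setminus \partial \BB^{d-1}$ of the appropriate dimension $d - 2$, where $d$ is the length of $[\hat 0,\gs]$; here one sets the empty complex to be the $(-1)$-sphere, consistently with the $(-1)$-cell convention of the preceding Remark. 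Equivalently, by Proposition~\ref{prop:faceposet}, it suffices that the closed interval geometric realization $|\gD((\hat 0,\gs))|$ together with its faces forms a regular CW decomposition of a ball. So the whole statement reduces to showing: under the two hypotheses, each such order complex $\gD((\hat 0,\gs))$ is a sphere.

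The key steps, in order, are as follows. First, fix $\gs \in \gG$ and consider the interval $\gG' := [\hat 0,\gs]$, which by hypothesis \eqref{enum:CW_fin_shell} is finite and shellable; being an interval in a thin pure poset, it is itself pure and thin. Second, I would invoke the standard fact that a bounded, pure, thin, shellable poset of length $d$ has order complex of the punctured part $(\hat 0,\gs)$ homeomorphic to $\BB^{d-1} \setminus \partial\BB^{d-1}$: shellability of $\gD(\gG')$ (equivalently of $\gD((\hat 0,\gs))$ after removing the two cone points) makes $\gD((\hat 0,\gs))$ a shellable $(d-2)$-complex, and thinness forces every $(d-3)$-face (codimension-one face) to lie in exactly two facets — thinness of length-$2$ intervals is exactly the statement that every rank-one-below element in a length-$2$ interval has two covers, which propagates to the codimension-one incidence count. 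A shellable pseudomanifold without boundary of dimension $d-2$ is a sphere (this is the classical consequence of shellability: a shellable complex is a ball or a sphere according to whether the last facet attaches along part or all of its boundary, and the no-boundary condition from thinness rules out the ball). Third, having each $\gD((\hat 0,\gs))$ a sphere of the right dimension, I would build the CW complex inductively: order the elements of $\gG$ by any linear extension of the rank function, and at each stage attach a cell $c_\gs$ whose attaching map is the characteristic-map datum supplied by the previous stage's regular CW structure on $|\gD((\hat 0,\gs))|$, which is a sphere bounding a ball. That this produces a regular CW complex with face poset $\gG$ is then a formal verification.

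I expect the main obstacle to be the sphericity step — proving that pure $+$ thin $+$ shellable forces $\gD((\hat 0,\gs))$ to be a genuine sphere rather than merely a homology sphere or a ball. The shellability gives that $\gD((\hat 0,\gs))$ is a ball or a sphere; the work is in showing thinness supplies the "closed pseudomanifold" property (every codimension-one face in exactly two facets, with no free codimension-one face), so that the shelling cannot terminate in the "ball" case. This is where the hypothesis $\#\gG \ge 2$ and the precise definition of thin (length-$2$ intervals have cardinality exactly $4$) get used: one must check that for a covering pair $\gt \lessdot \gs$ inside $\gG'$, the set of elements strictly between them in a length-$2$ subinterval has exactly two elements, translate this into the incidence of facets of $\gD((\hat 0,\gs))$, and conclude there is no boundary. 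Everything after that — the inductive reconstruction and the identification of the face poset — is routine once the building blocks are known to be spheres, and can be cited from \cite{B84} or \cite{LW}.
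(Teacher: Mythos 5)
This proposition is quoted in the paper directly from Bj\"orner \cite{B84} with no proof supplied, so the only meaningful benchmark is Bj\"orner's original argument, and your reconstruction follows it in all essentials: pass from shellability of $\gD([\hat 0,\gs])$ to shellability of the proper part $\gD((\hat 0,\gs))$ (the closed interval's complex is the double cone over the open one), use thinness to see that every codimension-one face of this pure $(d-2)$-dimensional complex lies in exactly two facets (such a face is a maximal chain of $(\hat 0,\gs)$ with one rank deleted, and the elements filling the gap are the two middle elements of a length-two interval of $\gG$, where the bounding elements may be $\hat 0$ or $\gs$ — this is where $\#\gG\ge 2$ and thinness enter), invoke the Danaraj--Klee consequence of shellability to rule out the ball case and conclude sphericity, and then rebuild the regular CW complex by attaching cells rank by rank, using Proposition~\ref{prop:faceposet} to identify the previously constructed subcomplex with $\abs{\gD((\hat 0,\gs))}$. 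Two corrections you should make. First, the order complex of the open interval is homeomorphic to the \emph{boundary sphere} $\partial\BB^{d-1}$, not to $\BB^{d-1}\setminus\partial\BB^{d-1}$ as you write twice: the latter is the open ball, which is noncompact, so no finite simplicial complex can be homeomorphic to it; your own dimension count ``$d-2$'' shows you intend the sphere, but the displayed identification is wrong as stated. Second, the step ``shellability of $\gD([\hat 0,\gs])$ is equivalent to shellability after removing the two cone points'' is true but should be justified or cited (shellability of a complex and of its cone over a new vertex are equivalent, cf.\ \cite{B80}), since it is the hinge on which the application of Danaraj--Klee to the open interval rests. With these repairs your outline is a correct account of the proof the paper delegates to \cite{B84}.
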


For a poset $\gG$, let $\gG^\ast$ denote its {\em dual poset},
i.e., the poset such that $\gG^\ast$ is equal to $\gG$ as sets
while the order of $\gG^\ast$ is the reverse of that of $\gG$.
It is noteworthy that $\gG$ is shellable if and only if so is $\gG^\ast$.
This is just an immediate consequence of the fact that $\gD(\gG) = \gD(\gG^\ast)$.

Now we are prepared to state Clark's argument. Consider the order on $\bigcup_q A^I_q$
given as the transitive closure of the following one:
for $(F,\m) \in A^I_q$ and $(F',\m') \in A^I_{q-1}$,
$$
(F,\m) > (F',\m') \ \Iff\ (F',\m') = (F_{\ang{i_r}},\m) \text{ or } (F_{\ang{i_r}}, g(x_{i_r}\m))
\quad \exists i_r \in F
$$
With this order, $\bigcup_q A^I_q$ becomes a poset.
Adding the new least element $\hat 0$ to $\bigcup_q A^I_q$, we obtain
the new poset, which is denoted by $\gG_P$.
This is the very poset associated with $P_\bullet$ referred in the above.

Let $\gG_P^\ast$ be the dual poset of $\gG_P$ and $<^\ast$ its order.
Define a labeling $\gl: \cC^1(\gG_P^\ast) \to \ZZ$ as follows:
$\gl((\void,\m) \lessdot^\ast \hat 0) := 0$ for all $\m \in G(I)$ and
$$
\gl((F,\m) \lessdot^\ast (F',\m')) := \begin{cases}
-i_r &\text{if $F' = F_{\ang{i_r}}$ and $\m' = \m$}\\
i_r &\text{if $F'=F_{\ang{i_r}}$ and $\m' = g(x_{i_r}\m)$}.
\end{cases}
$$
Naturally extended labelings $\cC^q(\gG_P^\ast) \to \ZZ^q$ with $q \ge 2$
are also denoted by $\gl$.
For each unrefinable chain $c \in \bigcup_q \cC^q(\gG_P^\ast)$, we set
$$
\gl_+(c) := \set{ k \in \gl(c)}{ k > 0}.
$$

\begin{lem}[{\cite[Theorem 3.6 and Lemma 3.7]{C}}]\label{lem:ELlabel}
The following hold:
\begin{enumerate}
\item The above labeling $\gl$ is an EL-labeling for each intervals in $\gG_P^\ast$:
\label{enum:EKELlabel}
\item for each interval $[(F,\m),(F',\m')]$ with $F \setminus F' = \bra{i_1,\dots, i_q}$,
there exists a unique minimal set $G$ {\rm (}with respect to inclusion{\rm)} such that
$G \subseteq (F \setminus F')$ and $g(x_G \m) = \m'$,
and $G$ coincides with $\gl_+(c)$ of the unique maximal increasing chain $c$ in the interval.
\label{enum:unique_xG}
\end{enumerate}
\end{lem}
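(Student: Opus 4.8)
The plan is to fix an interval $[(F,\m),(F',\m')]$ of $\gG_P^\ast$ and analyse its maximal chains. One first notes that $\gG_P$ is graded by $\#F$ (with $\hat 0$ in rank $-1$), so every interval is pure, and that any pair $(\tilde F,\tilde\m)$ of the interval satisfies $F'\subseteq\tilde F\subseteq F$: the two kinds of cover, $(\tilde F,\tilde\m)\lessdot^\ast(\tilde F_{\ang{i}},\tilde\m)$ (label $-i$) and $(\tilde F,\tilde\m)\lessdot^\ast(\tilde F_{\ang{i}},g(x_i\tilde\m))$ (label $i$), both discard one index $i\in\tilde F$, the latter being available exactly when $\max(\tilde F\setminus\{i\})<\max(g(x_i\tilde\m))$. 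Since admissibility forces $i<\max(\tilde\m)$ whenever $i\in\tilde F$, one has $g(x_i\tilde\m)\neq\tilde\m$, so the two covers are genuinely distinct and $\gl$ is well defined. The first ingredient I would isolate is a \emph{consistency property} of the Eliahou--Kervaire function $g$, extracted from \cite{EK}: along any unrefinable chain of the interval the current monomial depends only on the set $G$ of indices at which a ``$g$-move'' occurred and equals $g(x_G\m)$; in particular $g(x_G\m)=\m'$ for the set $G$ arising from any maximal chain of the interval.

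Next I would describe the increasing maximal chains. A non-decreasing word in signed indices lists the negative letters in order of decreasing index and then the positive letters in order of increasing index; hence an increasing maximal chain of $[(F,\m),(F',\m')]$ performs keep-moves on $(F\setminus F')\setminus G$ in decreasing order of index, reaching $(F'\cup G,\m)$, and then $g$-moves on $G$ in increasing order of index, so that such a chain is determined by the single set $G$, which must satisfy $g(x_G\m)=\m'$. The combinatorial core is the claim that $\set{G\subseteq F\setminus F'}{g(x_G\m)=\m'}$ is closed under intersection, hence has a least element $G^{\min}$; I would derive this from the consistency property together with the structural lemmas on $g$ in \cite{EK}. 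Granting it, I would check (a) that the canonically ordered chain attached to $G^{\min}$ is a genuine chain of the interval --- keep-moves never spoil admissibility, and admissibility of the successive $g$-moves follows from the minimality of $G^{\min}$ --- and (b) that no other $G$ yields an increasing chain: a keep-move on $i$ stays inside the interval exactly when $i\notin G^{\min}$, while a $g$-move on an index outside $G^{\min}$ performed before all the ``necessary'' $g$-moves leads either to a non-admissible pair or out of the interval. This gives the existence and uniqueness of the increasing maximal chain and, simultaneously, identifies its set of positive labels $\gl_+(c)$ with $G^{\min}$, which is assertion~(2) of the lemma.

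It then remains to prove lex-minimality, i.e. condition~(2) of Definition~\ref{dfn:EL-labeling}. I would show that the increasing chain is \emph{greedy}: at each pair reached so far, the smallest label on a cover that keeps the chain inside the interval is the most negative one available, namely $-i$ for the largest $i$ in $\tilde F\setminus F'$ lying outside the current minimal $g$-set, and this is precisely the move the increasing chain makes while keep-moves remain; afterwards only $g$-moves on the current minimal $g$-set are available, and the increasing chain performs them in increasing order of index. Comparing any other maximal chain $c'$ with the increasing chain $c$ at their first position of disagreement then yields $\gl(c)<_\lex\gl(c')$, so $\gl$ is an EL-labeling on every interval of $\gG_P^\ast$.

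The step I expect to be the main obstacle is the book-keeping in (a) and (b) above: proving that the canonically ordered chain attached to $G^{\min}$ --- and only that chain --- threads through admissible pairs at every stage. This needs a precise understanding of how $\max(g(x_i\tilde\m))$ and the least $g$-set of the truncated sub-interval evolve as moves are stripped off, and it is where the purely combinatorial minimality of $G^{\min}$ must be reconciled with the admissibility constraints; the consistency property of $g$ together with the lemmas of \cite{EK} should be enough, but the case analysis is delicate.
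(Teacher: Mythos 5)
First, a point of reference: the paper contains no proof of Lemma~\ref{lem:ELlabel} at all; it is imported verbatim from Clark \cite{C}, so the only in-paper material to compare with is the parallel argument for the polarized poset in Section 4 (Lemmas \ref{lem:label_lem}, \ref{lem:label_unique} and Proposition \ref{prop:EL-shellable}). Several of your ingredients are sound: your ``consistency property'' follows from $g(\m\, g(\n))=g(\m\n)$, so the monomial reached along any saturated chain is $g(x_G\m)$ where $G$ is the set of indices of $g$-moves made so far; an increasing chain must perform its keep-moves in decreasing index order and then its $g$-moves in increasing index order, hence is determined by its positive-label set; and the family $\set{G\subseteq F\setminus F'}{g(x_G\m)=\m'}$ is indeed intersection-closed (use that $g(\n)$ is the $\prec$-greatest generator dividing $\n$ together with $\gcd(x_{G_1}\m,x_{G_2}\m)=x_{G_1\cap G_2}\m$), so it has a least element $G^{\min}$.

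The genuine gap is exactly where you locate it, and it is not peripheral book-keeping but the content of the lemma. Your steps (a) and (b) --- that the canonical sequence attached to $G^{\min}$ threads through admissible pairs, and that a cover stays inside the interval precisely when the discarded index is handled as you describe --- are asserted, not proved. Note that ``stays inside the interval'' is a reachability condition in $\gG_P$, a priori stronger than the existence of a suitable set $G$: the implication ``$i\in G^{\min}$ implies the keep-move leaves the interval'' does follow from the least-element property via consistency, but the converse is essentially the existence statement you are trying to establish, and your greedy argument for condition (2) of Definition~\ref{dfn:EL-labeling} presupposes it; so as written the proposal proves neither existence nor lex-minimality of the increasing chain, nor the identification $\gl_+(c)=G^{\min}$. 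A way to close the gap that avoids this case analysis is the mechanism the paper uses for $\gG_{\wP}^\ast$: prove the analogue of Lemma~\ref{lem:label_lem} (a negative label commutes leftwards; a positive label that fails to commute can be replaced by its negative), which straightens an arbitrary maximal chain into an increasing one that is lexicographically smaller or equal --- existence and lex-minimality at once --- and prove the analogue of Lemma~\ref{lem:label_unique} directly by induction (using \cite[Lemmas 1.2--1.4]{EK} one checks $\lcm(\n,g(x_i\n))=x_i\n$ and that exponents of variables below $i$ are unchanged), so that $x_{\gl_+(c)}=\lcm(\m,\m')/\m$ for every increasing chain $c$; this forces $\gl(c)$ and hence uniqueness, and it also yields assertion (2), since $\supp(\lcm(\m,\m')/\m)$ is contained in every $G$ with $g(x_G\m)=\m'$ and, by consistency, is itself such a $G$.
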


By an easy observation, $\gG_P$ is thin,
and hence the above Lemma implies that $\gG_P$ is a CW poset.
Since the $\kk$-coefficients of the images of each differential maps
consist only of $\pm 1$ in the Eliahou-Kervaire resolution $P_\bullet$,
as a corollary, the following can be deduced.

\begin{cor}[{\cite[Theorem 6.4]{C}}, {\cite[Theorem 5.3]{M}}]\label{cor:EK_regular}
The Eliahou-Kervaire resolution $P_\bullet$ is supported by a regular CW complex.
\end{cor}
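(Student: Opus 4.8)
The plan is to convert the statement established just above---that the thin poset $\gG_P$ is a CW poset---into an explicit cellular structure on $P_\bullet$. First I would fix a regular CW complex $X$, with $(-1)$-cell $\void$, together with an isomorphism of posets $\gG_X\cong\gG_P$ sending $\void$ to $\hat 0$ and $X^{(q)}$ onto $A^I_q$ for each $q$, and henceforth identify $\gG_X$ with $\gG_P$ along it. Then I would define the grading map $\gr\colon X^{(\ast)}\to\ZZ^n_{\ge 0}$ by $\gr(\void):=\mathbf{0}$ and by declaring $\gr(F,\m)$ to be the exponent vector of $x_F\m$ for $(F,\m)\in A^I_q$. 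To check that $\gr$ is order preserving it is enough to consider a covering pair $(F',\m')\lessdot(F,\m)$ in $\gG_P$: by the definition of the order on $\gG_P$, either $(F',\m')=(F_{\ang{i_r}},\m)$, whence $x_{F'}\m'=x_F\m/x_{i_r}$, or $(F',\m')=(F_{\ang{i_r}},g(x_{i_r}\m))$ with $i_r\in B^I(F,\m)$, whence $x_{F'}\m'\cdot\frac{x_{i_r}\m}{g(x_{i_r}\m)}=x_{F_{\ang{i_r}}}\,x_{i_r}\m=x_F\m$; in either case $x_{F'}\m'$ divides $x_F\m$, so $\gr(F',\m')\le\gr(F,\m)$ componentwise.

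With this $\gr$ I would compare $\cF^{(X,\gr)}_\bullet$ with $P_\bullet$ term by term. The degree-$q$ modules coincide, $\cF^{(X,\gr)}_q=\bigoplus_{(F,\m)\in A^I_q}S\cdot e(F,\m)=P_q$, with identical $\ZZ^n$-graded bases. For the differentials, recall that in a regular CW complex $[c:c']\neq 0$ exactly when $c'\lessdot c$, and then $[c:c']=\pm 1$; hence $\partial^{(X,\gr)}_q(e(F,\m))$ is supported precisely on the basis elements indexed by the covers of $(F,\m)$ in $\gG_P$---the same basis elements on which $\partial(e(F,\m))$ is supported in $P_\bullet$---while the monomials $x^{\gr(F,\m)-\gr(F',\m')}$ computed above are $x_{i_r}$ or $\frac{x_{i_r}\m}{g(x_{i_r}\m)}$, which are exactly the monomials occurring in the Eliahou-Kervaire differential. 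Thus $\partial^{(X,\gr)}$ and $\partial$ agree up to the signs of the coefficients, all of which lie in $\{\pm 1\}$.

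It remains to reconcile these signs. Since $P_\bullet$ is a complex, its $\pm 1$ sign function---supported on the covering relations of $\gG_P$---satisfies, around each interval of length two in $\gG_P$, the cancellation identity forced by $\partial^2=0$ (the monomial factors telescope), so it is an admissible $\pm 1$ incidence function on the thin poset $\gG_P=\gG_X$. Because, up to a reorientation of the cells, the incidence numbers of a regular CW complex are the unique admissible such function, we may choose orientations of the cells of $X$ realizing it; for that choice $\partial^{(X,\gr)}$ coincides with the Eliahou-Kervaire differential, so $P_\bullet=\cF^{(X,\gr)}_\bullet$ is cellular and supported on the regular CW complex $X$. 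I expect this last step---identifying the signs $(-1)^r$ and $-(-1)^r$ of the Eliahou-Kervaire differential with incidence numbers of $X$---to be the only genuinely delicate point; it is precisely where the regularity of $X$ and the hypothesis that all coefficients of $P_\bullet$ lie in $\{\pm1\}$ are both used.
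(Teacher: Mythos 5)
Your proposal is correct and takes essentially the same route as the paper, which derives the corollary by combining Lemma~\ref{lem:ELlabel}, the thinness of $\gG_P$, and Proposition~\ref{prop:CW} to see that $\gG_P$ is a CW poset, and then uses that all coefficients are $\pm 1$ to identify the Eliahou--Kervaire differential with a cellular one (the paper delegating the details to Clark and Mermin). The one step you assert without proof---that an incidence function on the face poset of a regular CW complex is unique up to reorientation of cells---is precisely the standard fact implicitly invoked there; it holds by induction on skeleta, using that the boundary of each cell is a regular CW sphere whose codimension-one adjacency graph is connected, so your sign reconciliation goes through.
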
\section{A regular CW complex supporting the Eliahou-Kervaire resolution}
\label{sec:EKball}

As in the previous section, let $I$ be a stable monomial ideal of $S$.
It is quite natural to ask about the topological properties
of a regular CW complex supporting the Eliahou-Kervaire resolution,
while very little is known about such properties.
The goal of this section is to prove that the complex is homeomorphic to
a closed ball if the corresponding Eliahou-Kervaire resolution is
that of a Cohen-Macaulay stable monomial ideal.
Let $\prec$ be the lexicographic order on the set of monomials of $S$
such that $x_1 \succ x_2 \succ \dots \succ x_n$.
Recall that for a stable monomial ideal $I$,
there is the function $g$ which sends a monomial $\m$ in $I$ to
the unique monomial $\m_0 \in G(I)$ which divides $\m$ and satisfies
$\max(\m_0) \le \min(\m/\m_0)$.
Throughout this section, we tacitly use the following properties of $g$
(see \cite[Lemmas 1.2, 1.3, 1.4]{EK}): for any monomial $\m \in G(I)$,
\begin{enumerate}
\item $g(x_i\m) \succeq \m$ for all $i$,
\label{enum:grteq}
\item $g(x_i\m) = \m$ if and only if $ i \ge \max(\m)$, and
\item $g(\m g(\n)) = g(\m\n)$ for any monomial $\m$, $\n$ with $\m \in S$ and $\n \in I$.
\end{enumerate}

\begin{rem}
Though in \cite[Lemma 1.4]{EK}, the converse of the inequality in \eqref{enum:grteq}
is proved for a different order, it is easy to verify that \eqref{enum:grteq} indeed holds
true for our order.
\end{rem}

The function $g$ can be characterized in terms of $\prec$.

\begin{lem}
For a monomial $\m \in I$, the monomial $g(\m)$ is the greatest element, with respect to $\prec$,
among the elements of $G(I)$ dividing $\m$.
\end{lem}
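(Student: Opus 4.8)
The plan is to deduce everything from the three properties of $g$ recalled just above the statement, and in particular from the identity $g(\m\, g(\n)) = g(\m\n)$. First I would observe that $g(\m)$ itself lies in the set $\set{\n \in G(I)}{\n \mid \m}$: it belongs to $G(I)$ by Lemma~\ref{lem:g}, and it divides $\m$ by the very definition of $g$. Since $\prec$ is a total order on the monomials of $S$, it therefore suffices to prove that $g(\m) \succeq \n$ for every $\n \in G(I)$ with $\n \mid \m$.

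So fix such an $\n$, and write the monomial $\m/\n$ as $x_{i_1}x_{i_2}\cdots x_{i_k}$, allowing repetitions among the indices, where $k = \deg(\m/\n)$. Put $\n_0 := \n$ and $\n_j := g(x_{i_j}\n_{j-1})$ for $1 \le j \le k$; note that $\n_j \in G(I)$ for every $j$, since $g$ takes values in $G(I)$. On the one hand, property~\eqref{enum:grteq} applied at each step gives $\n_j \succeq \n_{j-1}$, hence $\n_k \succeq \n_{k-1} \succeq \cdots \succeq \n_0 = \n$. On the other hand, an easy induction on $j$ shows that $\n_j = g(x_{i_j}\cdots x_{i_1}\n)$: for the inductive step one uses the identity $g(\m\, g(\n)) = g(\m\n)$ with the monomial $x_{i_j}$ of $S$ and the ideal element $x_{i_{j-1}}\cdots x_{i_1}\n \in I$. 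In particular $\n_k = g(x_{i_1}\cdots x_{i_k}\n) = g(\m)$. Combining the two observations, $g(\m) = \n_k \succeq \n$, as desired, and since $g(\m)$ itself belongs to the set under consideration, it is its greatest element.

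I do not expect any serious obstacle here: the statement falls out almost immediately from the catalogue of properties of $g$, and the only thing requiring a moment's care is the telescoping step, namely checking that at each stage the inner argument of $g$ genuinely lies in the ideal $I$ (it does, being the product of a monomial of $S$ with $\n_{j-1} \in G(I) \subseteq I$), so that the identity $g(\m\, g(\n)) = g(\m\n)$ legitimately applies, and that the nested $g$'s can be absorbed one at a time. The remark in the excerpt already warrants that property~\eqref{enum:grteq} holds for our lexicographic order $\prec$, which is the one external input the argument relies on.
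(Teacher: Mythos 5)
Your proof is correct, but it follows a genuinely different route from the one in the paper. You fix an arbitrary divisor $\n \in G(I)$ of $\m$, factor $\m/\n = x_{i_1}\cdots x_{i_k}$, and telescope with the iterates $\n_j = g(x_{i_j}\n_{j-1})$, using property (3) ($g(\m\, g(\n)) = g(\m\n)$) to identify $\n_k$ with $g(\m)$ and property (1) to get $g(\m) = \n_k \succeq \n_0 = \n$; the uniqueness part of Lemma~\ref{lem:g} is never needed, only that $g(\m)\in G(I)$ divides $\m$. The paper argues in the opposite direction: it takes $\m'$ to be the $\prec$-greatest element of $G(I)$ dividing $\m$ and shows, by contradiction, that $\max(\m') \le \min(\m/\m')$ --- if $i := \min(\m/\m') < \max(\m')$, then $g(x_i\m')$ still divides $\m$ and is strictly $\succ \m'$ by properties (1) and (2), contradicting maximality --- and then invokes the uniqueness in Lemma~\ref{lem:g} to conclude $\m' = g(\m)$. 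So the paper trades your use of property (3) for property (2) together with the characterization of $g$ via the condition $\max(\m_0)\le\min(\m/\m_0)$; its argument is a shade shorter, while yours is more constructive in that it exhibits $g(\m)$ explicitly as the end of an increasing chain of generators starting at any given divisor, and it only needs the comparison $g(x_i\m)\succeq\m$ in its non-strict form. Both arguments rely on the remark that property (1) is valid for the order $\prec$ used here, which you correctly flag as the external input.
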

\begin{proof}
Let $\m'$ be the greatest element of $G(I)$ which divides $\m$.
Suppose $\max(\m') > \min(\m/\m')$, and set $i := \min(\m/\m')$.
Then $x_i\m'$ still divides $\m$, and hence so does $g(x_i\m')$.
Since $i < \max(\m')$, the strict inequality $g(x_i\m') \succ \m'$ holds,
which contradicts the maximality of $\m'$.
Therefore it follows that $\max(\m') \le \min(\m/\m')$ and
hence $\m' = g(\m)$ by Lemma \ref{lem:g}.
\end{proof}

\begin{rem}
Let $G(I) = \bra{\m_1,\dots \m_r}$ with $\m_1 \succ \dots \succ \m_r$.
The above lemma also can be deduced by showing $I$ has linear quotients
with respect to the ordering $\m_1,\dots ,\m_r$;
in fact the above lemma says that
the function $g$ is just the {\em decomposition function} of $I$ with
respect to the ordering $\m_1,\dots, \m_r$ (see \cite{HT} for details).
\end{rem}

\begin{lem}\label{lem:lcm}
For an unrefinable chain $c: c_0 \lessdot^\ast c_1 \lessdot^\ast \dots \lessdot^\ast c_q$ in $\gG^\ast_P$, the following holds:
\begin{enumerate}
\item Set $\gl(c) = (\gl_0,\gl_1,\dots,\gl_q)$. If $\gl_i < 0$ for some $i > 0$, then there exists an unrefinable chain $c'$ in $[c_0,c_q]_{\gG^\ast_P}$ with the label $(\gl_i,\gl_0,\gl_1,\dots,\gl_{i-1},\gl_{i+1},\dots,\gl_q)$.
\label{enum:unique_C}
\item
If $c$ is increasing and $c_q \neq \hat 0$, then
$$
x_{\gl_+(c)} = \frac{\lcm(\m,\m')}\m,
$$
where we set $c_0 = (F,\m)$ and $c_q = (F',\m')$.
\label{enum:lcm}
\end{enumerate}
\end{lem}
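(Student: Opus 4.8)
I would settle \eqref{enum:unique_C} by a purely combinatorial commutation argument and \eqref{enum:lcm} by exponent bookkeeping along the chain, the decisive ingredient for the latter being that $g$ never increases $\max$.

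For \eqref{enum:unique_C}, recall that a cover $a\lessdot^\ast b$ in $\gG^\ast_P$ deletes one index $j$ from the $F$-component of $a$, and either $\m(b)=\m(a)$ (with label $-j$) or $\m(b)=g(x_j\m(a))$ (with label $j$). I would show that a negatively labelled cover can be transposed with the cover immediately below it. Suppose $a\lessdot^\ast b$ carries the label $-j$ (so $\m(b)=\m(a)$); since $i\ge1$ forbids $a$ from being $c_0$, there is a cover $a'\lessdot^\ast a$, deleting some index $j'\neq j$ from $F(a')$. Replacing $a$ by $a'':=\big(F(a')\setminus\{j\},\ \m(a')\big)$ produces a valid unrefinable chain $a'\lessdot^\ast a''\lessdot^\ast b$ in which $-j$ now precedes the label originally carried by $a'\lessdot^\ast a$: admissibility of $a''$ is inherited from $a'$ because $F(a')\setminus\{j\}\subseteq F(a')$, and $b$ is recovered because deleting $j$ commutes with the operation (a $g$-move, or the identity) on the $\m$-component carried by the other cover. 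Iterating such transpositions moves $\gl_i$ to the front of the label sequence while keeping $c_0$ and $c_q$ fixed, which yields the asserted chain $c'$ inside $[c_0,c_q]_{\gG^\ast_P}$. One only ever transposes past a $\pm$-labelled cover, never the $0$-labelled cover to $\hat0$ (which lies above every negative label), so no degeneracy intervenes.

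For \eqref{enum:lcm}, the first move is to normalise $c$. Being increasing and not reaching $\hat0$, it splits into a block of negatively labelled covers, which leave the $\m$-component untouched, followed by positively labelled covers carrying the (automatically distinct, hence strictly increasing) labels $j_1<\dots<j_p$, with $\{j_1,\dots,j_p\}=\gl_+(c)$. Set $\mu^{(0)}:=\m$ and $\mu^{(l)}:=g(x_{j_l}\mu^{(l-1)})$, so $\mu^{(p)}=\m'$; admissibility of the source of the $l$-th positive cover gives $j_l<\max(\mu^{(l-1)})$. I would then isolate three facts: (a) $\max(\mu^{(l)})\le\max(\mu^{(l-1)})$, since $\mu^{(l)}\mid x_{j_l}\mu^{(l-1)}$ and $\max(x_{j_l}\mu^{(l-1)})=\max(\mu^{(l-1)})$; (b) the exponent of $x_{j_l}$ in $\mu^{(l)}$ is exactly one more than in $\mu^{(l-1)}$, since otherwise $\mu^{(l)}\mid\mu^{(l-1)}$, forcing $\mu^{(l)}=\mu^{(l-1)}$, which contradicts $g(x_{j_l}\mu^{(l-1)})\succ\mu^{(l-1)}$; and (c) every index occurring in $x_{j_l}\mu^{(l-1)}/\mu^{(l)}$ exceeds $j_l$, as follows from (b) together with $\max(\mu^{(l)})\le\min\!\big(x_{j_l}\mu^{(l-1)}/\mu^{(l)}\big)$ and $x_{j_l}\mid\mu^{(l)}$ (whence $\max(\mu^{(l)})\ge j_l$).

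The heart of the argument is then to deduce that, for each $k$, the exponent of $x_{j_k}$ is altered by no positive cover other than the $k$-th, so that in the end $\m'_{j_k}=\m_{j_k}+1$. For the covers after the $k$-th this is immediate from (c): each of them deletes some $j_l>j_k$ with quotient supported on indices $>j_l>j_k$. For a cover before the $k$-th, if $x_{j_k}$ divided $x_{j_l}\mu^{(l-1)}/\mu^{(l)}$ for some $l<k$, then $j_k\ge\max(\mu^{(l)})$, whereas (a) together with $j_k<\max(\mu^{(k-1)})$ forces $j_k<\max(\mu^{(k-1)})\le\max(\mu^{(l)})$ --- a contradiction. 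This is the step I expect to be the real obstacle: the monotonicity of $\max$ under $g$, set against the admissibility bound $j_k<\max(\mu^{(k-1)})$, is precisely what keeps an early $g$-move from eroding the exponent of an index that is only activated later. Granting it, and using (b) at the $k$-th cover, we obtain $\m'_{j_k}=\m_{j_k}+1$ for each $j_k\in\gl_+(c)$; and for $a\notin\gl_+(c)$ every cover satisfies $\mu^{(l)}_a\le\mu^{(l-1)}_a$, so $\m'_a\le\m_a$. Hence $\lcm(\m,\m')_a=\m_a+[\,a\in\gl_+(c)\,]$ for all $a$, i.e.\ $\lcm(\m,\m')/\m=x_{\gl_+(c)}$. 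The degenerate case $\gl_+(c)=\void$ (all labels negative, $\m'=\m$) is trivial.
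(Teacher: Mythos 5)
Your proof is correct, but for part (2) it takes a genuinely different route from the paper's. The paper argues by a short contradiction: with $C=\gl_+(c)$ one has $\m'=g(x_C\m)$, so $\lcm(\m,\m')$ divides $x_C\m$; if it were a proper divisor it would divide $x_{C'}\m$ for some $C'\subsetneq C$, and the lexicographic characterization of $g$ (the unlabelled lemma at the start of Section~\ref{sec:EKball}) then forces $g(x_{C'}\m)=g(x_C\m)=\m'$, contradicting the uniqueness of the minimal set $G$ in part (2) of Lemma~\ref{lem:ELlabel}. You never invoke that uniqueness: after reducing to a chain with all labels positive you track exponents cover by cover, and your three facts --- the exponent of $x_{j_l}$ jumps by exactly one at the $l$-th cover, the discarded part of $x_{j_l}\mu^{(l-1)}$ is supported in indices $>j_l$, and $\max$ is non-increasing along the chain, played off against the admissibility bound $j_k<\max(\mu^{(k-1)})$ --- give the sharper conclusion $\deg_a(\m')=\deg_a(\m)+1$ for $a\in\gl_+(c)$ and $\deg_a(\m')\le\deg_a(\m)$ otherwise, from which the displayed formula is immediate. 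The paper's route buys brevity by outsourcing the combinatorics to Clark's lemma; yours buys self-containedness (only the Eliahou--Kervaire properties of $g$ recalled at the start of Section~\ref{sec:EKball} are used) plus finer exponent information, and it is close in spirit to the paper's own inductive proof of the polarized analogue, Lemma~\ref{lem:label_unique}. For part (1), which the paper omits as ``easy,'' your adjacent-transposition argument (including the remark that the $0$-labelled cover into $\hat 0$ is never crossed) is sound and is essentially the commutation the paper does carry out in the polarized setting in Lemma~\ref{lem:label_lem}; the only point worth making explicit is that the strict inequality $g(x_{j_l}\mu^{(l-1)})\succ\mu^{(l-1)}$ used in your fact (b) follows from the listed properties $g(x_i\m)\succeq\m$ and the equivalence of $g(x_i\m)=\m$ with $i\ge\max(\m)$, since $j_l<\max(\mu^{(l-1)})$.
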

\begin{proof}
The assertion (1) is easy to prove. We will show only (2).
For simplicity, we set $C := \gl_+(c)$. Obviously $\m' = g(x_C \m)$, and hence $\lcm(\m,\m')$ divides $x_C \m$.
Suppose $\lcm(\m,\m') \neq x_C \m$. Then there exists $C' \subset C$ with $C' \neq C$ such that $\lcm(\m,\m')$ divides $x_{C'} \m$.
Clearly $g(x_C \m)=\m'$ divides $x_{C'}\m$. This implies $g(x_C\m) \preceq g(x_{C'}\m)$.
On the other hand, $g(x_{C'} \m) \preceq g(x_C \m)$ holds, since $g(x_{C'}\m)$ divides $x_C \m$. Therefore it follows that $g(x_C\m) = g(x_{C'}\m)$, which contradicts the assertion
\eqref{enum:unique_xG} of Lemma~\ref{lem:ELlabel}.
\end{proof}

Henceforth we assume $I$ is Cohen-Macaulay with $\codim S/I = h$.
The codimension $h$ is equal to
the projective dimension of $S/I$ by Auslander-Buchsbaums's formula.
Moreover $\Ass(S/I) = \bra{(x_1,\dots ,x_h)}$, since any graded associated prime
ideal of a stable monomial ideal is of the form $(x_1,\dots ,x_k)$ for some $k$.
Thus the following holds.

\begin{lem}\label{lem:stableCM}
For a Cohen-Macaulay stable monomial ideal $I$ of codimension $h$,
it follows that
$$
h = \max\set{\max(\m)}{ \m \in G(I)}.
$$
and $x_h^{l_I} \in G(I)$ for a unique positive integer $l_I$.
\end{lem}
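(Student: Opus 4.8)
The plan is to read the unique associated prime $(x_1,\dots,x_h)$ off a minimal generator of maximal $\max$-value, by using the defining stable move to produce a monomial $u$ with $(I:u)=(x_1,\dots,x_h)$.

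Set $m:=\max\set{\max(\m)}{\m\in G(I)}$. First I would prove $m=h$. Choose $\m^*\in G(I)$ with $\max(\m^*)=m$, so $x_m\mid\m^*$, and put $u:=\m^*/x_m$. Since $u$ is a proper divisor of the minimal generator $\m^*$, it does not lie in $I$, so its image in $S/I$ is nonzero. Applying the definition of stability to $\m^*\in I$ with $k=\max(\m^*)=m$ gives $x_iu=x_i(\m^*/x_m)\in I$ for every $i<m$, while $x_mu=\m^*\in I$; hence $(x_1,\dots,x_m)\subseteq(I:u)$. For the reverse inclusion, suppose $(I:u)\not\subseteq(x_1,\dots,x_m)$; then $(I:u)$ contains a monomial $w$ with $\supp(w)\subseteq\bra{m+1,\dots,n}$, and some $\m'\in G(I)$ divides $wu$. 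But every element of $G(I)$, and also $u$, is supported on $\bra{1,\dots,m}$, so $\m'$ must already divide $u$ (the factor $w$ involves only $x_{m+1},\dots,x_n$), giving the contradiction $u\in I$. Hence $(I:u)=(x_1,\dots,x_m)$, and this prime lies in $\Ass(S/I)=\bra{(x_1,\dots,x_h)}$, forcing $m=h$. (Alternatively, $m=h$ follows by comparing the length of the Eliahou--Kervaire resolution $P_\bullet$ of $I$ --- inspection of the admissible pairs shows $A^I_q\neq\varnothing$ precisely for $0\le q\le m-1$, so $\pd_S I=m-1$ --- with $\pd_S I=\pd_S(S/I)-1=h-1$.)

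For the pure power of $x_h$, I would argue as follows. The minimal primes of $I$ form a nonempty subset of $\Ass(S/I)=\bra{(x_1,\dots,x_h)}$, hence equal $\bra{(x_1,\dots,x_h)}$, so $\sqrt I=(x_1,\dots,x_h)$. In particular $x_h\in\sqrt I$, so $\set{l\ge1}{x_h^l\in I}$ is nonempty; let $l_I\ge1$ be its least element. If $x_h^{l_I}$ were not a minimal generator, some $\m'\in G(I)$ with $\m'\ne x_h^{l_I}$ would divide it, so $\m'=x_h^{l'}$ with $1\le l'<l_I$ (using that $I$ is proper, so $1\notin I$) and $x_h^{l'}\in I$, contradicting the choice of $l_I$; thus $x_h^{l_I}\in G(I)$. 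Uniqueness of $l_I$ is immediate, since of two distinct pure powers of $x_h$ one divides the other and so they cannot both be minimal generators.

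The only point requiring care is the reverse inclusion $(I:u)\subseteq(x_1,\dots,x_m)$, which rests entirely on the fact that no generator of $I$ involves a variable past $x_m$ --- precisely the way $m$ was chosen. The remaining steps are routine bookkeeping with the stable move and with minimality of monomial generators, so I do not expect a genuine obstacle here.
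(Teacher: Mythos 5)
Your proof is correct, and for the first half it takes a different route from the paper. The paper's own justification is the terse discussion preceding the lemma: by Auslander--Buchsbaum and Cohen--Macaulayness, $\pd S/I=\codim S/I=h$, and since the minimal Eliahou--Kervaire resolution of $I$ has length $\max\set{\max(\m)}{\m\in G(I)}-1$, the first equality follows; combined with $\Ass(S/I)=\bra{(x_1,\dots,x_h)}$ this gives the pure power. You instead exhibit the relevant associated prime directly, verifying $(I:u)=(x_1,\dots,x_m)$ for $u=\m^*/x_m$ via the stable exchange and the fact that no minimal generator involves a variable beyond $x_m$, and then invoke unmixedness of the Cohen--Macaulay quotient to force $m=h$; this witness argument is more elementary and self-contained (it needs nothing about the resolution), and you correctly record the paper's projective-dimension computation as an alternative, whose only advantage is brevity given that both ingredients are already at hand there. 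For the pure power of $x_h$ you and the paper argue the same way, through $\sqrt I=(x_1,\dots,x_h)$ and minimality of $l_I$. One caveat: your colon computation establishes only the membership $(x_1,\dots,x_h)\in\Ass(S/I)$, which together with unmixedness is all that the step $m=h$ requires; but the full equality $\Ass(S/I)=\bra{(x_1,\dots,x_h)}$ that you use to identify the radical (i.e., to exclude other minimal primes and conclude $x_h\in\sqrt I$) additionally needs the standard fact, asserted without proof in the paper as well, that every associated prime of a stable monomial ideal is an initial segment $(x_1,\dots,x_k)$. Provided you cite that fact, the remaining steps (the least $l$ with $x_h^{l}\in I$ gives $x_h^{l_I}\in G(I)$, and uniqueness because of two distinct pure powers one divides the other) go through as written.
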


Set $G_h(I) := \set{\m \in G(I)}{\max(\m) = h}$ and let $G_h(I) = \bra{\m^{(1)},\dots, \m^{(r)}}$
with $\m^{(1)} \prec \m^{(2)} \ \dots \prec \m^{(r)}$.
Clearly $\m^{(1)} = x_h^{l_I}$.
Let $P_\bullet$ be the Eliahou-Kervaire resolution and $\gG_P$ its associated poset.
For a monomial $\m \in S$, we set
$$
\deg_i(\m) := \max\set{k \ge 0}{x_i^k \text{ divides } \m}.
$$

\begin{lem}\label{lem:basic_lem}
The following hold.

\begin{enumerate}
\item For any $\m \in G(I)$ and $k \in \supp(\m)$, there exists an integer $l$ with $l > 0$ such that
$$
\frac{\m}{x_k} \cdot x_h^l \in G(I).
$$
\label{enum:EKborel_gen}
\item Let $(F,\m)$, $(F',\m')$ be admissible pairs with $F = \bra{ i_1,\dots ,i_q}$,
and let $i_s \in F$. Assume $i_s$ satisfies one of the following conditions:
\begin{enumerate}
\item $\deg_{i_s}(\m') < \deg_{i_s}(\m)$, or
\item $\deg_{i_s}(\m') = \deg_{i_s} (\m)$ and $i_s \not\in F'$.
\end{enumerate}
Then
$$
[\hat 0, (F',\m')] \cap [\hat 0, (F,\m)] \subseteq [\hat 0, (F_{\ang{i_s}},\m)].
$$
In particular, if $\m = \m'$, then $[\hat 0, (F',\m')] \cap [\hat 0, (F,\m)] = [\hat 0, (F \cap F',\m)]$.
\label{enum:EKintersection}
\end{enumerate}
\end{lem}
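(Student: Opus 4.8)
The plan is to treat the two parts separately. For part~\eqref{enum:EKborel_gen}, I would exhibit the required generator by a minimality argument and then verify minimality using stability. Since $I$ is Cohen--Macaulay of codimension $h$ with $\Ass(S/I)=\{(x_1,\dots,x_h)\}$, the ideal $I$ is $(x_1,\dots,x_h)$-primary; in particular $x_h^{l_I}\in I$ by Lemma~\ref{lem:stableCM}, so $\frac{\m}{x_k}x_h^l\in I$ for $l\gg0$, while $\frac{\m}{x_k}\notin I$ because it properly divides the minimal generator $\m$. Let $l\ge0$ be least with $\mu:=\frac{\m}{x_k}x_h^l\in I$; then $l\ge1$, and I claim $\mu\in G(I)$. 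If not, $\mu$ has a proper divisor $\nu\in I$, hence $\nu\mid\mu/x_p$ for a suitable $p\in\supp(\mu)$, so $\mu/x_p\in I$. If $p=h$ this is $\frac{\m}{x_k}x_h^{l-1}\in I$, contradicting minimality of $l$. If $p\ne h$ then $p<h$ since $\supp(\m)\subseteq\{1,\dots,h\}$, and $\max(\mu/x_p)=h$ as $l\ge1$; applying stability to $\mu/x_p\in I$ at $p<h$ yields $\mu/x_h=\frac{\m}{x_k}x_h^{l-1}\in I$, again a contradiction. The only step worth a remark is the reduction from a proper divisor in $I$ to $\mu/x_p\in I$, which is routine.

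For part~\eqref{enum:EKintersection}, I would first record the description of the principal order ideals of $\gG_P$ coming from Clark's EL-labeling (Lemma~\ref{lem:ELlabel}) and Lemma~\ref{lem:lcm}\eqref{enum:lcm}: for an admissible pair $(F,\m)$ and $(G,\mathsf{w})\in\gG_P\setminus\{\hat 0\}$, one has $(G,\mathsf{w})\le(F,\m)$ iff $G\subseteq F$ and, writing $G_1:=\set{i}{\deg_i(\mathsf{w})>\deg_i(\m)}$ (equivalently $x_{G_1}=\lcm(\m,\mathsf{w})/\m$), one has $g(x_{G_1}\m)=\mathsf{w}$ and $G_1\subseteq F\setminus G$. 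Granting this, take $(G,\mathsf{w})\in[\hat 0,(F',\m')]\cap[\hat 0,(F,\m)]$; the case $(G,\mathsf{w})=\hat 0$ is trivial, so assume otherwise. Put $G_1$ as above and $G_2:=\set{i}{\deg_i(\mathsf{w})>\deg_i(\m')}$; then $G\subseteq F\cap F'$, $G_1\subseteq F\setminus G$, $G_2\subseteq F'\setminus G$ and $g(x_{G_1}\m)=\mathsf{w}=g(x_{G_2}\m')$. It suffices to prove $i_s\notin G\cup G_1$, since then $G\subseteq F_{\ang{i_s}}$ and $G_1\subseteq F_{\ang{i_s}}\setminus G$, so $(G,\mathsf{w})\le(F_{\ang{i_s}},\m)$ by the same description.

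In case (b): $i_s\notin F'\supseteq G$, so $i_s\notin G$; and if $i_s\in G_1$ then $\deg_{i_s}(\mathsf{w})>\deg_{i_s}(\m)=\deg_{i_s}(\m')$, so $i_s\in G_2\subseteq F'$, a contradiction. In case (a): if $i_s\in G$, then $i_s\notin G_1$ and $i_s\notin G_2$ (as $G_1\subseteq F\setminus G$, $G_2\subseteq F'\setminus G$), so $\deg_{i_s}(\mathsf{w})\le\deg_{i_s}(\m')<\deg_{i_s}(\m)$; hence $x_{i_s}$ divides $x_{G_1}\m/g(x_{G_1}\m)$, whose support consists of indices $\ge\max(\mathsf{w})$ by Lemma~\ref{lem:g}, contradicting $i_s<\max(\mathsf{w})$ (which holds as $(G,\mathsf{w})$ is admissible and $i_s\in G$); so $i_s\notin G$. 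If instead $i_s\in G_1$, then $\deg_{i_s}(\mathsf{w})\le\deg_{i_s}(\m)+1$ by squarefreeness of $x_{G_1}$, whence $\deg_{i_s}(\mathsf{w})=\deg_{i_s}(\m)+1>\deg_{i_s}(\m')+1$; in particular $i_s\in G_2$, yet $\deg_{i_s}(\mathsf{w})\le\deg_{i_s}(\m')+1$ by squarefreeness of $x_{G_2}$, a contradiction. This gives the inclusion. Finally, if $\m=\m'$ then each $i_s\in F\setminus F'$ satisfies (b), so intersecting the resulting inclusions over all such $i_s$ gives $[\hat 0,(F',\m)]\cap[\hat 0,(F,\m)]\subseteq\bigcap_{i_s\in F\setminus F'}[\hat 0,(F_{\ang{i_s}},\m)]=[\hat 0,(F\cap F',\m)]$, the last equality again from the description, while the reverse inclusion is clear from $(F\cap F',\m)\le(F,\m)$ and $(F\cap F',\m)\le(F',\m)$.

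I expect the main obstacle to be establishing and using the order-ideal description invoked throughout part~\eqref{enum:EKintersection}: that the minimal set $G_1$ with $g(x_{G_1}\m)=\mathsf{w}$, $G_1\subseteq F\setminus G$ equals $\set{i}{\deg_i(\mathsf{w})>\deg_i(\m)}$ (this is where Lemma~\ref{lem:lcm} enters), and that these combinatorial conditions are in fact equivalent to $(G,\mathsf{w})\le(F,\m)$, whose sufficiency direction forces one to check that the intermediate pairs along a suitable covering chain remain admissible. Once that bookkeeping is in place, the case analysis above is elementary.
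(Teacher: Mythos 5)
Part \eqref{enum:EKborel_gen} of your proposal is correct: your minimality-of-$l$ argument, with stability applied to $\mu/x_p$ to push the contradiction back to $\mu/x_h\in I$, is a valid (and slightly more elementary) alternative to the paper's argument, which instead sets $\n:=g(\m')$ and uses $\max(\n)=h$ together with the minimality of $l$ to force $\n=\m'$. Likewise, your case analysis in part \eqref{enum:EKintersection} (showing $i_s\notin G$ and $i_s\notin G_1$, via squarefreeness of $\lcm(\m',\n)/\m'$ in case (a) and via $G_2\subseteq F'$ in case (b)) is exactly the computation the paper performs with $\gl_+(c)$ and $\gl_+(c')$, and the facts you use there in the ``only if'' direction are indeed supplied by Lemma~\ref{lem:ELlabel}\eqref{enum:unique_xG} and Lemma~\ref{lem:lcm}\eqref{enum:lcm}.

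The genuine gap is the step you yourself flag and then ``grant'': the \emph{sufficiency} direction of your order-ideal description, i.e.\ that $G\subseteq F_{\ang{i_s}}$, $g(x_{G_1}\m)=\n$ and $G_1\subseteq F_{\ang{i_s}}\setminus G$ already imply $(G,\n)\le(F_{\ang{i_s}},\m)$. Nothing in the paper (nor in your write-up) provides this; proving it amounts to constructing a saturated chain of admissible pairs realizing the removals in $G_1$ and $G\setminus$-complement in a suitable order, checking admissibility of every intermediate pair and using $g(\m\, g(\n))=g(\m\n)$ --- a nontrivial piece of Clark/Mermin-type bookkeeping, not ``elementary case analysis.'' The same unproved direction is invoked again for the last equality $\bigcap_{i_s\in F\setminus F'}[\hat 0,(F_{\ang{i_s}},\m)]=[\hat 0,(F\cap F',\m)]$. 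The paper avoids the characterization altogether by using Lemma~\ref{lem:lcm}\eqref{enum:unique_C}: once you know $i_s\in F\setminus G$ and $i_s\notin\gl_+(c)$, the label $-i_s$ occurs in $\gl(c)$, and shifting this negative label to the front produces an unrefinable chain in $\gG_P^\ast$ beginning with $(F,\m)\lessdot^\ast(F_{\ang{i_s}},\m)$ and ending at $(G,\n)$, which gives $(G,\n)\le(F_{\ang{i_s}},\m)$ directly; the ``in particular'' statement then follows by iterating the first assertion, the reverse inclusion being the trivial one obtained by deleting elements of $F$ one at a time while keeping $\m$. So your argument is repairable, but as written it rests on an unestablished structural claim, and the repair is precisely the chain-manipulation lemma you did not use.
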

\begin{proof}
(1) The case $k = h$ is trivial. Assume $k < h$.
Since $x_h^{l_I} \in I$ and $\m/x_k \not\in I$, there exists the least positive integer $l$
such that $(\m/x_k)\cdot x_h^l \in I$. Set $\m' := (\m/x_k) \cdot x_h^l$.
We will show that $\m' \in G(I)$, which completes the proof.
Set $\n := g(\m')$. The equality $\max(\n) = h$ then holds;
otherwise $\n$ divides $\m/x_k$, which is a contradiction.
The equality $\max(\n) = h = \max(\m')$ implies
$\n = (\m/x_k) \cdot x_h^{l'}$ for some positive integer $l'$.
The minimality of $l$ yields the inequality $l' \ge l$, and hence $\n$ is divided by $\m'$.
Therefore it follows that $\m' = \n \in G(I)$, as desired.

(2) We will show only the first assertion; the second is an easy consequence of this assertion. We will make use of the EL-labeling on $\gG_P^\ast$.
Take any $(G,\n) \in [\hat 0, (F',\m')] \cap [\hat 0, (F,\m)] \setminus \bra{\hat 0}$, and let $c, c'$ be the unique unrefinable increasing chain in $[(F,\m),(G,\n)]_{\gG_P^\ast}$ and $[(F',\m'),(G,\n)]_{\gG_P^\ast}$, respectively.
It follows from \eqref{enum:lcm} of Lemma \ref{lem:lcm} that
$$
x_{\gl_+(c)} = \frac{\lcm(\m,\n)}\m, \qquad x_{\gl_+(c')} = \frac{\lcm(\m',\n)}{\m'}.
$$
Suppose $i_s \in \gl_+(c)$. Then $\deg_{i_s}(\n) > \deg_{i_s}(\m)$.
In the case of (a), it follows that $\deg_{i_s}(\n) - \deg_{i_s}(\m') \ge 2$.
However $\deg_{i_s}(\n) - \deg_{i_s}(\m') \le 1$ must hold since $x_{\gl_+(c')}$ is squarefree. This is a contradiction.
In the case of (b), it follows that $i_s \in \gl_+(c')$, which contradicts $i_s \not\in F'$.
Thus $i_s \not\in \gl_+(c)$ in both cases.

If $i_s \not\in G$, then the proof is completed; if this is the case,
then $-i_s \in \gl(c)$ and applying \eqref{enum:unique_C} of Lemma \ref{lem:lcm} yields an unrefinable chain
in $\gG_P^\ast$
starting with $(F,\m) \lessdot^* (F_{\ang{i_s}},\m')$ and ending at $(G,\n)$.
Therefore $(G,\n) < (F_{\ang{i_s}},\m)$ as desired.

In the case (b), the assertion is clear; indeed $G \subseteq F'$.
We will consider the case (a).
Suppose $i_s \in G$. Then $i_s < \max(\n)$ and $i_s \not\in \gl_+(c')$,
and the first implies
\begin{align*}
& \deg_{i_s}(\n) = \deg_{i_s}(g(x_{\gl_+(c)} \m)) = \deg_{i_s}(x_{\gl_+(c)} \m)\\
& \deg_{i_s}(\n) = \deg_{i_s}(g(x_{\gl_+(c')} \m')) = \deg_{i_s}(x_{\gl_+(c')} \m').
\end{align*}
Since $i_s$ does not belong to neither $\gl_+(c)$ nor $\gl_+(c')$,
it follows that
\begin{align*}
\deg_{i_s}(\n)  = \deg_{i_s}(x_{\gl_+(c')} \m')
                   = \deg_{i_s}(\m') 
                  < \deg_{i_s}(\m) 
                  = \deg_{i_s}(x_{\gl_+(c)}\m) = \deg_{i_s}(\n).
\end{align*}
This is a contradiction.
\end{proof}

Set $F^I := \bra{1,\dots ,h-1}$.
It is clear that for each $i$, $(F^I, \m^{(i)})$ is admissible
and $F^I$ is maximal, with respect to inclusion, among the subsets $G \subset \NN$
such that $(G,\m^{(i)})$ is admissible.
Let $\gG_i$ denote the interval $[\hat 0, (F^I,\m^{(i)})]$.

\begin{cor}\label{cor:EK_const}
The following hold.
\begin{enumerate}
\item $\gG = \bigcup_{i=1}^r \gG_i$.
\item $\pr{\bigcup_{i =1}^j \gG_i} \cap \gG_{j+1} = \bigcup_{s \in \supp(\m^{(j+1)}) \setminus \bra{h}}[\hat 0, (F^I_{\ang{s}},\m^{(j+1)})]$.
\label{enum:EKposetint}
\item For any admissible pair $(F, \m)$ with $F := \bra{i_1,\dots ,i_q}$
and for any subset $\gs \subseteq \bra{i_1, \dots ,i_q}$,
the order complex of the poset
$$
\bigcup_{i_r \in \gs} [\hat 0, (F_{\ang{i_r}}, \m)]
$$
is constructible.
\label{enum:EK_const}
\end{enumerate}
\end{cor}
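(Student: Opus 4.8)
The three parts get progressively more substantial, and I expect part~(3) to be the real work; parts~(1) and~(2) should reduce to recognizing a couple of values of the function $g$ via Lemma~\ref{lem:g}, together with the explicit covering relations defining $\gG=\gG_P$. A preliminary observation I would record first: since $\gG$ is a CW poset, every principal order ideal $[\hat 0,(F,\m)]$ is (pure) shellable by Lemma~\ref{lem:ELlabel}, hence its order complex is constructible and pure of dimension $\#F+1$; also, for admissible pairs, $(F',\m')$ is covered by $(F,\m)$ precisely as in the definition of $\partial$.

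For part~(1), adjoining indices to $F$ one at a time gives a chain $(F,\m)\le(\{1,\dots,\max(\m)-1\},\m)$, so I may assume $F=\{1,\dots,k-1\}$ with $k=\max(\m)$; if $k=h$ then $\m\in G_h(I)$ and there is nothing to prove. If $k<h$, Lemma~\ref{lem:basic_lem}(1) produces $l>0$ with $\m^{(i)}:=(\m/x_k)x_h^l\in G(I)$, which is one of $\m^{(1)},\dots,\m^{(r)}$ because $\max(\m^{(i)})=h$. Since $\m$ divides $\m x_h^l=x_k\m^{(i)}$ and $\max(\m)=k\le h=\min(x_h^l)$, Lemma~\ref{lem:g} identifies $g(x_k\m^{(i)})=\m$, so $(\{1,\dots,k\},\m^{(i)})$ covers $(F,\m)$; adjoining $k+1,\dots,h-1$ to the first coordinate then shows $(F,\m)\le(F^I,\m^{(i)})$, i.e.\ $(F,\m)\in\gG_i$.

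For part~(2), since $\gG_i=[\hat 0,(F^I,\m^{(i)})]$, the left side equals $\bigcup_{i\le j}\bigl([\hat 0,(F^I,\m^{(i)})]\cap[\hat 0,(F^I,\m^{(j+1)})]\bigr)$. For $i\le j$ the distinct generators $\m^{(i)}\prec\m^{(j+1)}$ first differ at some position $s$, where $\deg_s(\m^{(i)})<\deg_s(\m^{(j+1)})$, so $s\in\supp(\m^{(j+1)})$, and $s<h$ since otherwise $\m^{(i)}\mid\m^{(j+1)}$, impossible for distinct minimal generators; Lemma~\ref{lem:basic_lem}(2)(a) then gives that intersection $\subseteq[\hat 0,(F^I_{\ang{s}},\m^{(j+1)})]$, yielding ``$\subseteq$''. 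For the reverse inclusion, fix $s\in\supp(\m^{(j+1)})\setminus\{h\}$; Lemma~\ref{lem:basic_lem}(1) gives $l>0$ with $\n:=(\m^{(j+1)}/x_s)x_h^l\in G(I)$, and $\n\prec\m^{(j+1)}$ (they agree before position $s$, where $\deg_s$ drops), so $\n=\m^{(i')}$ with $i'\le j$; as in part~(1), $g(x_s\m^{(i')})=g(\m^{(j+1)}x_h^l)=\m^{(j+1)}$ by Lemma~\ref{lem:g}, so $(F^I,\m^{(i')})$ covers $(F^I_{\ang{s}},\m^{(j+1)})$ and hence $[\hat 0,(F^I_{\ang{s}},\m^{(j+1)})]\subseteq\gG_{i'}$.

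For part~(3) the plan is induction on $\#F$, the base $\#F\le 1$ being immediate (a point or an edge). Enumerating $\gs=\{i_{r_1},\dots,i_{r_m}\}$, I would build $\Sigma_t:=\bigcup_{u\le t}[\hat 0,(F_{\ang{i_{r_u}}},\m)]$ and show each $\gD(\Sigma_t)$ constructible by an auxiliary induction on $t$ using the recursive definition: each $\gD([\hat 0,(F_{\ang{i_{r_u}}},\m)])$ is constructible and pure of dimension $\#F$, and distributing the intersection and applying the ``$\m=\m'$'' clause of Lemma~\ref{lem:basic_lem}(2),
$$\Sigma_{t-1}\cap[\hat 0,(F_{\ang{i_{r_t}}},\m)]=\bigcup_{u<t}[\hat 0,(F\setminus\{i_{r_u},i_{r_t}\},\m)],$$
which is an instance of the very statement being proved, now with $(F_{\ang{i_{r_t}}},\m)$ in place of $(F,\m)$ and index set $\{i_{r_u}:u<t\}\subseteq F_{\ang{i_{r_t}}}$; since $\#F_{\ang{i_{r_t}}}=\#F-1$, the inductive hypothesis makes its order complex constructible and pure of dimension $\#F-1$, so the recursion applies; taking $t=m$ finishes. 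The main obstacle is exactly this step: the assertion of~(3) is engineered so that the intersection arising in its proof is again of the same shape but with a strictly smaller first entry, and the content is to check that this self-similarity genuinely holds and that the dimensions drop by exactly one — which is where the ``$\m=\m'$'' case of Lemma~\ref{lem:basic_lem}(2) (collapsing an intersection of principal ideals to a single principal ideal) and the purity of principal order ideals of $\gG_P$ (from its being a CW poset) are essential. By contrast, the only non-formal input to~(1) and~(2) is Lemma~\ref{lem:g}, used to read off $g(\m x_h^l)=\m$.
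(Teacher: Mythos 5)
Your proposal is correct and follows essentially the same route as the paper: part (1) via Lemma~\ref{lem:basic_lem}(1) and reading off $g(\m x_h^l)=\m$ from Lemma~\ref{lem:g}, part (2) via Lemma~\ref{lem:basic_lem}(1) for ``$\supseteq$'' and the first-differing-exponent argument plus Lemma~\ref{lem:basic_lem}(2)(a) for ``$\subseteq$'', and part (3) by splitting off one interval, collapsing the intersection with the ``$\m=\m'$'' clause of Lemma~\ref{lem:basic_lem}(2), and invoking shellability (hence constructibility) of intervals. The only cosmetic difference is that the paper inducts on $\#\gs$ alone (the intersection instance has a smaller index set), whereas you run an outer induction on $\#F$ with an inner one on $t$; both are valid and use the same decomposition and dimension counts.
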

\begin{proof}
(1) It suffices to show the inclusion $\gG \subseteq \bigcup_{i=1}^r \gG_i$.
Obviously $\hat 0$ and $(F,\m)$ with $F \subseteq F^I$ and $\m \in G_h(I)$
belong to $\bigcup_{i=1}^r \gG$.
Take any $(F,\m) \in \gG \setminus \bra{\hat 0}$ with $\m \not\in G_h(I)$.
Clearly $\max(\m) < h$. By \eqref{enum:EKborel_gen} of Lemma \ref{lem:basic_lem},
there exists $\m' \in G_h(I)$ such that $\m = g(x_{\max(\m)} \m')$.
Obviously $(F \cup \bra{\max(\m)}, \m') \in \bigcup_{i=1}^r \gG_i$ and
$(F \cup \bra{\max(\m)}, \m') \gtrdot (F,\m)$. Hence $(F,\m) \in \bigcup_{i=1}^r\gG_i$.

(2) We shall show the inclusion $\supseteq$. Take any $s \in \supp(\m^{(j+1)})$ with $s \neq h$.
By \eqref{enum:EKborel_gen} of Lemma \ref{lem:basic_lem}, there exists a positive integer $l$ such that
$(\m^{(j+1)}/x_s)\cdot x_h^l \in G(I)$. Clearly $(\m^{(j+1)}/x_s) \cdot x_h^l$ is less than
$\m$ with respect to $\succ$, and hence coincides $\m^{(i)}$
for some $i$ with $1 \le i \le j$.
Therefore $(F^I_{\ang{s}},\m^{(j+1)}) = (F^I_{\ang{s}}, g(x_s\m^{(i)})) \in \pr{\bigcup_{i =1}^j \gG_i} \cap \gG_{j+1}$.

To show the inverse inclusion, it suffices to show that each $\gG_i \cap \gG_{j+1}$
is contained in $[\hat 0, (F^I_{\ang s},\m^{(j+1)})]$ for some $s \in \supp(\m^{(j+1)})$ with $s \neq h$.
Since $\m^{(i)} \prec \m^{(j+1)}$,
there exists an integer $s$ with $1 \le s \le n$ such that
$\deg_k(\m^{(i)}) = \deg_k(\m^{(j+1)})$ for $k < s$ and $\deg_s(\m^{(i)}) < \deg_s(\m^{(j+1)})$.
It then follows that $s \in F^I$;
otherwise $\m^{(i)}$ divides $\m^{(j+1)}$, which is a contradiction.
Applying \eqref{enum:EKintersection} of Lemma \ref{lem:basic_lem}, we conclude that
$\gG_i \cap \gG_{j+1} \subseteq [\hat 0, (F^I_{\ang{s}},\m^{(j+1)})]$.

(3) Let $\gD$ be the order complex of $\bigcup_{i_r \in \gs}[ \hat0, (F_{\ang{i_r}},\m)]$.
We will use induction on $\#\gs$.
By \eqref{enum:EKELlabel} of Lemma \ref{lem:ELlabel}, each intervals in $\gG_{P}^\ast$
are shellable and hence so are those in $\gG_P$.
In particular, each intervals in $\gG_P$ are constructible.
Hence the assertion holds when $\#\gs = 1$.
Assume $\#\gs \ge 2$.
Let $\gs = \bra{i_{m_1},\dots ,i_{m_t}}$ and let $\gD', \gD''$ be
the order complexes of
$\bigcup_{s=1}^{t-1} [\hat 0, (F_{\ang{i_{m_s}}},\m)]$ and
$[\hat 0, (F_{\ang{i_{m_t}}},\m)]$.
It suffices to show that $\gD' \cup \gD''$ is constructible.
Both of $\gD'$ and $\gD''$ are constructible by the inductive hypothesis,
and $\dim \gD' = \dim \gD''$.
By Lemma \ref{lem:basic_lem},
$$
\gD' \cap \gD'' =
\gD\pr{\bigcup_{s = 1}^{t-1} [\hat 0, \pr{(F_{\ang{i_{m_t}}})_{\ang{i_{m_s}}}, \m}]}.
$$
Obviously $\dim (\gD' \cap \gD'') = \dim \gD' - 1 = \dim \gD'' - 1$.
Applying the inductive hypothesis shows that $\gD' \cap \gD''$ is also constructible. 
Therefore we conclude that $\gD' \cup \gD''$ is also constructible.
\end{proof}

Now we are prepared to prove the following main theorem
in this section.

\begin{thm}\label{thm:EKball}
Let $I$ be a Cohen-Macaulay stable monomial ideal and $P_\bullet$ its
Eliahou-Kervaire resolution.
Then $P_\bullet$ is supported by a regular CW complex whose underlying space is homeomorphic to a ball.
\end{thm}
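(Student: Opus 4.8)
The plan is to show that the order complex $\gD(\gG_P \setminus \{\hat 0\})$ of the face poset of the Eliahou--Kervaire resolution (minus its least element) is a constructible simplicial complex satisfying the two pseudomanifold-with-boundary conditions of Proposition~\ref{prop:balllem}, and then invoke that proposition together with Proposition~\ref{prop:faceposet}. Indeed, by Corollary~\ref{cor:EK_regular} and Lemma~\ref{lem:ELlabel}, $\gG_P$ is already known to be the face poset of a regular CW complex $X$ supporting $P_\bullet$, and by Proposition~\ref{prop:faceposet} the underlying space of $X$ is homeomorphic to the geometric realization of $\gD(\gG_P \setminus \{\hat 0\})$; so it suffices to prove this order complex is a ball.

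First I would establish \emph{constructibility}. Using Corollary~\ref{cor:EK_const}(1), write $\gG_P = \bigcup_{i=1}^r \gG_i$ where $\gG_i = [\hat 0, (F^I, \m^{(i)})]$, and correspondingly $\gD(\gG_P\setminus\{\hat 0\}) = \bigcup_{i=1}^r \gD(\gG_i \setminus \{\hat 0\})$. Each $\gG_i$ is an interval in $\gG_P$, hence shellable by Lemma~\ref{lem:ELlabel}(1) (EL-shellability passes to intervals, and shellability of a poset is that of its order complex), so each $\gD(\gG_i\setminus\{\hat 0\})$ is constructible; note too that, since $(F^I,\m^{(i)})$ is a maximal admissible pair, all these have the same dimension, namely $h-1$ (the length of a maximal chain from $\hat 0$). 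I would then argue by induction on $j$ that $\bigcup_{i=1}^j \gD(\gG_i\setminus\{\hat 0\})$ is constructible of dimension $h-1$: the inductive step requires that the intersection with $\gD(\gG_{j+1}\setminus\{\hat 0\})$ be constructible of dimension $h-2$, and Corollary~\ref{cor:EK_const}(2) identifies that intersection as the order complex of $\bigcup_{s \in \supp(\m^{(j+1)})\setminus\{h\}}[\hat 0,(F^I_{\ang s},\m^{(j+1)})]$, which is constructible of the right dimension by Corollary~\ref{cor:EK_const}(3). This gives constructibility of $\gD(\gG_P\setminus\{\hat 0\})$.

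It remains to verify the two local conditions of Proposition~\ref{prop:balllem}: every $(h-2)$-face lies in at most two $(h-1)$-faces, and some $(h-2)$-face lies in exactly one. The first follows from thinness of $\gG_P$ together with the fact that $\gG_P$ is graded of rank $h$: a facet of $\gD(\gG_P\setminus\{\hat 0\})$ is a maximal chain $\hat 0 \lessdot c_1 \lessdot \cdots \lessdot c_{h}$ with the $\hat 0$ removed, i.e. a maximal chain $c_1 \lessdot \cdots \lessdot c_h$ of $\gG_P \setminus \{\hat 0\}$, and a $(h-2)$-face is such a chain with one $c_k$ ($1 \le k \le h-1$) deleted; thinness of every length-$2$ interval $[c_{k-1},c_{k+1}]$ (with $c_0 := \hat 0$ when $k=1$, using that $\gG_P$ has at most — in fact exactly — the configurations forced by Bj\"orner's criterion, cf. the remark after Lemma~\ref{lem:ELlabel}) shows there are at most two ways to reinsert a middle element, giving at most two facets. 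For the second condition, I would exhibit a free $(h-2)$-face explicitly: take the monomial $\m^{(1)} = x_h^{l_I}$ (Lemma~\ref{lem:stableCM}), the facet corresponding to the chain $(F^I,\m^{(1)}) \gtrdot (F^I_{\ang 1},\m^{(1)}) \gtrdot \cdots$ obtained by successively dropping $1,2,\dots,h-1$ from $F^I$ with the monomial frozen at $x_h^{l_I}$, and delete its top element $(F^I,\m^{(1)})$; the only way to re-extend this $(h-2)$-chain upward within $\gG_P$ is back to $(F^I,\m^{(1)})$, because $x_h^{l_I}$ is the $\prec$-smallest element of $G_h(I)$ and no "arrow of positive label" can leave it (that would produce $g(x_i x_h^{l_I}) \succ x_h^{l_I}$ still in $G_h(I)$, yet with fewer variables in the complement $F^I_{\ang 1}$ — one checks this is impossible).

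I expect the main obstacle to be this last point: pinning down precisely which $(h-2)$-face is free and proving rigorously that it lies in only one facet, since this requires a careful combinatorial analysis of the covering relations of $\gG_P$ at the chain through $x_h^{l_I}$ (distinguishing the two types of covers, those that shrink $F$ keeping $\m$ and those that change $\m$ via $g$), using the characterization of $g$ via $\prec$ and Lemma~\ref{lem:basic_lem}. Verifying thinness and gradedness of $\gG_P$ carefully — in particular that every maximal chain from $\hat 0$ has length exactly $h$, which uses Cohen--Macaulayness through Lemma~\ref{lem:stableCM} and the fact that $P_\bullet$ has length $h$ — is routine but must be done. Once these are in hand, Proposition~\ref{prop:balllem} delivers that $\gD(\gG_P\setminus\{\hat 0\})$ is homeomorphic to a closed ball, and Proposition~\ref{prop:faceposet} transfers this to the underlying space of the regular CW complex supporting $P_\bullet$, completing the proof.
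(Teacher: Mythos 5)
Your overall strategy is exactly the paper's (constructibility of the order complex of $\gG_P\setminus\bra{\hat 0}$ via Corollary~\ref{cor:EK_const}, then the two local conditions of Proposition~\ref{prop:balllem}, then Proposition~\ref{prop:faceposet}), but there is a genuine gap in your verification of condition~\eqref{enum:atmost2}. You treat an $(h-2)$-face as a maximal chain $c_1\lessdot\cdots\lessdot c_h$ with some $c_k$, $1\le k\le h-1$, deleted, and invoke thinness of the length-two intervals $[c_{k-1},c_{k+1}]$. That covers only the ridges missing a \emph{non-top} rank. A ridge can equally well be a saturated chain $c_1\lessdot\cdots\lessdot c_{h-1}$ starting at a minimal element and missing the \emph{top} rank; its extensions to facets are the maximal elements $(F^I,\m^{(i)})$ covering $c_{h-1}$, and since $\gG_P$ has no greatest element, thinness says nothing about how many there are --- a priori all $r$ elements of $G_h(I)$ could contribute. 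This is precisely where Cohen--Macaulayness is used beyond purity, and it is the bulk of the paper's proof: writing $c_{h-1}=(F,\m)$ and taking $\m^{(j+1)}$ to be the $\prec$-largest element with $c_{h-1}\lessdot(F^I,\m^{(j+1)})$, Corollary~\ref{cor:EK_const}\eqref{enum:EKposetint} forces any other cover $(F^I,\m^{(i)})$ to satisfy $g(x_s\m^{(i)})=\m^{(j+1)}$ with $\bra{s}=F^I\setminus F$, which determines $\m^{(i)}$ uniquely; hence at most two facets. Without this argument your ``at most two'' claim is unproved, and it is genuinely needed (for non-Cohen-Macaulay ideals the analogous complexes can fail to be balls).

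Two smaller points. In your free-face argument the direction of the $g$-covers is muddled: the issue is not whether an ``arrow of positive label'' leaves $x_h^{l_I}$, but whether another maximal element maps \emph{onto} it, i.e.\ whether $g(x_s\m^{(i)})=\m^{(1)}=x_h^{l_I}$ for some $\m^{(i)}\neq\m^{(1)}$; this is impossible because $g(x_s\m^{(i)})\succeq\m^{(i)}\succ\m^{(1)}$, which is the clean way to finish (the paper's chain drops $h-1,h-2,\dots$ rather than $1,2,\dots$, but either choice works). Finally, in the constructibility step you apply Corollary~\ref{cor:EK_const}\eqref{enum:EK_const} to complexes with $\hat 0$ removed, whereas it is stated for intervals containing $\hat 0$; either insert Lemma~\ref{lem:const_lem} at each stage or, as the paper does, prove $\gD(\gG_P)$ constructible first and remove the cone point $\hat 0$ once at the end via $\gD(\gG_P)=\gD(\gG_P\setminus\bra{\hat 0})\ast\ang{\hat 0}$.
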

\begin{proof}
For simplicity, we set $\gG_0 := \gG_P \setminus \bra{\hat 0}$.
By Corollary \ref{cor:EK_regular}, the poset $\gG_P$ is CW,
and hence coincides with a face poset of a regular CW complex $X$ with the $(-1)$-cell $\void$.
By Proposition \ref{prop:faceposet}, it is enough to show $\gD(\gG_0)$ satisfies the three conditions in Proposition \ref{prop:balllem}.

To verify the constructibility of $\gD(\gG_0)$, by Lemma \ref{lem:const_lem},
we have only to show that $\gD(\gG_P)$ is constructible since
$\gD(\gG_P) = \gD(\gG_0) \ast \ang{\hat 0}$.
The constructibility of $\gD(\gG_P)$ is an immediate consequence of
\eqref{enum:EK_const} of Corollary \ref{cor:EK_const} and the fact that each interval in $\gG_P$ is shellable.

Next we will show that $\gD(\gG_0)$ satisfies the condition \eqref{enum:atmost2}
in Proposition \ref{prop:balllem}.
Note that $\gG_0$ is pure and the maximal length of the chains in $\gG_0$
is equal to $h-1$. Hence $\gD(\gG_0)$ is pure of dimension $h-1$.
Take any $(h-2)$-face $\gs$ of $\gD(\gG_0)$ and
let $c_\gs: c_1 < c_2 < \dots < c_{h-1}$ be the corresponding
chain  in $\gG_0$.
If $c_0$ is not minimal in $\gG_1$ or $c_\gs$ is refinable,
then $c_\gs$ is indeed contained in just two maximal chains in $\gG_1$
since $\gG_P$ is thin.
Assume $c_1$ is minimal and the chain $c_\gs$ is unrefinable
(hence each $<$ in $c_\gs$ is $\lessdot$).
Then every $c_h$ with $c_{h-1} \lessdot c_h$ is a maximal element in $\gG_P$
and of the form $(F^I, \m^{(i)})$.
Let $\m^{(j+1)}$ be the maximal element with respect to $\prec$
such that $c_{h-1} \lessdot (F^I,\m^{(j+1)})$.
If there exists another $(F^I,\m^{(i)})$ with $c_{h-1} \lessdot (F^I,\m^{(i)})$,
then it follows from \eqref{enum:EKposetint} of Corollary \ref{cor:EK_const} that
$g(x_s\m^{(i)})= \m^{(j+1)}$ for some $s \in \supp(\m^{(j+1)} ) \setminus \bra{h}$
and $i \le j$.
This implies $x_s \cdot \m^{(i)} = x_h^k \cdot \m^{(j+1)}$ for suitable integer $k$
with $0 \le k \le l_I$, and hence $\m^{(i)} = g((x_j^{l_I}/x_s)\m^{(j+1)})$.
Moreover if we set $c_{h-1} = (F,\m)$, then $\bra{s} = F^I \setminus F$.
Conseqently $(F^I,\m^{(i)})$ is uniquely determined only by $(F^I,\m^{(j+1)})$ and $c_{h-1}$.
Therefore $c_\gs$ is contained in at most two maximal chains.

Finally, the complex $\gD(\gG_0)$ is indeed satisfies
the condition \eqref{enum:only1} in Proposition \ref{prop:balllem} since the $(h-1)$-face
of the order complex $\gD(\gG_0)$ corresponding to the chain
$$
(F^I_{\ang{h-1}},\m^{(1)}) \gtrdot
((F^I_{\ang{h-1}})_{\ang{h-2}},\m^{(1)}) \gtrdot \dots \gtrdot (\varnothing, \m^{(1)})
$$
is contained only in the facet corrresponding to
$$
(F^I,\m^{(1)}) \gtrdot (F^I_{\ang{h-1}},\m^{(1)}) \gtrdot
((F^I_{\ang{h-1}})_{\ang{h-2}},\m^{(1)}) \gtrdot \dots \gtrdot (\varnothing, \m^{(1)}).
$$
\end{proof}

\begin{exmp}\label{ex:EK}
The CW complex in the figure below supports the Eliahou-Kervaire resolution of
the Cohen-Macaulay Borel fixed ideal $I = (x_1^2,x_1x_2,x_1x_3,x_2^2,x_2x_3,x_3^2)$.
Clearly this is regular and homeomorphic to a $2$-dimensional closed ball.

\begin{figure}[htbp]
\begin{center}
\includegraphics[height=5cm, width=5cm]{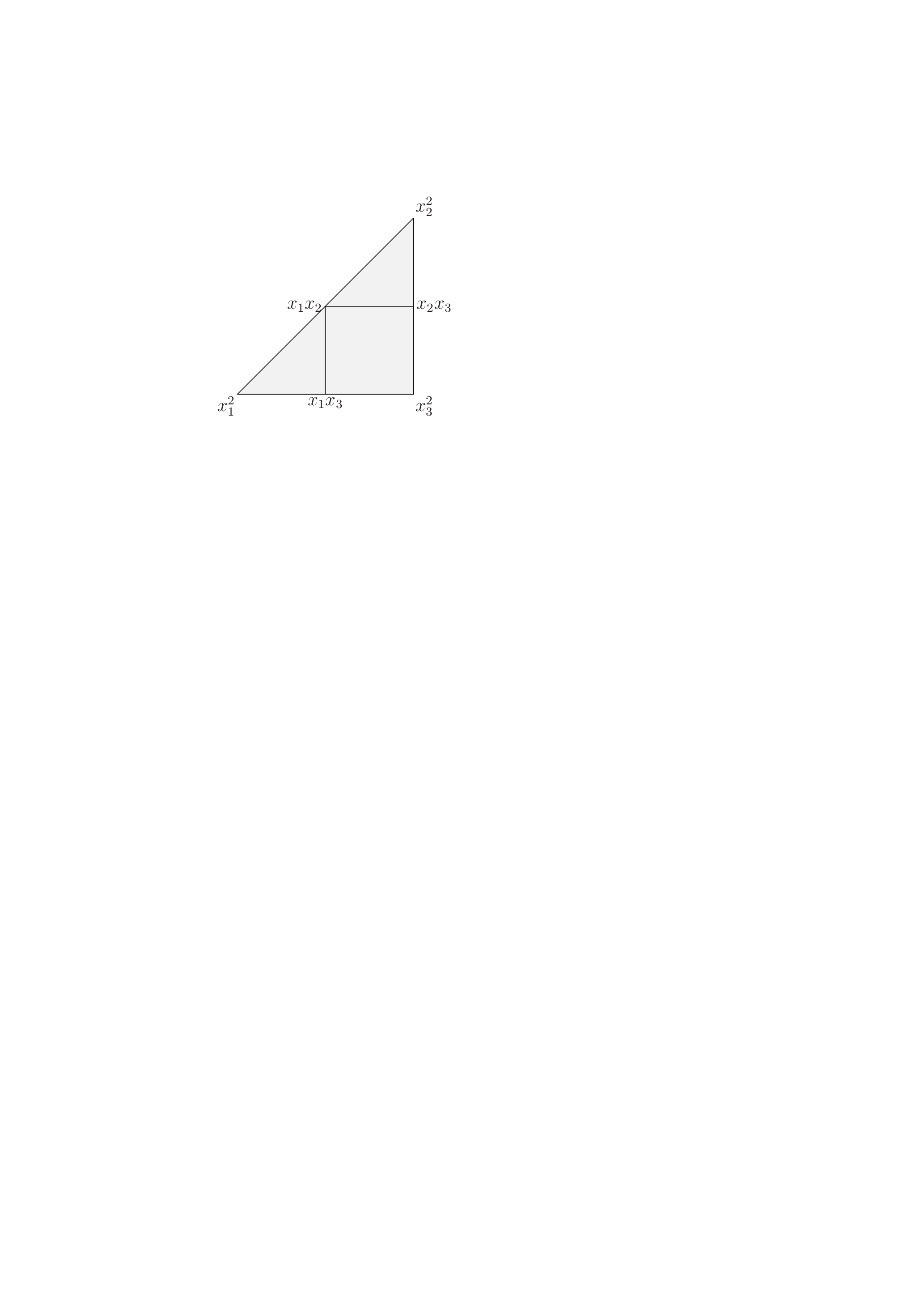}
\end{center}
\caption{}\label{fig:EK}
\end{figure}
\end{exmp}\section{A modified Eliahou-Kervaire resolution of a Borel fixed ideal}\label{sec:regular}

Throughout this section, $I$ denotes a Borel fixed ideal of $S$.
In the paper \cite{OY}, the authors constructed an explicit cellular minimal free resolution
$\wP_\bullet$ of $\bpol(I)$ (see Section~\ref{sec:intro} for the definition of $\bpol(I)$).
Against \cite{OY}, we will call $\wP_\bullet$
the {\em modified Eliahou-Kervaire resolution} and
the term ``Eliahou-Kerviare type resolution" will be used for the generic term of
the usual Eliahou-Kervaire resolution and the modified one.

As is stated in Section~\ref{sec:intro},
the regularity of the CW complex supporting $\wP_\bullet$ is still obscure.
It is noteworthy that we could not deduce the regularity
from Corollary \ref{cor:EK_regular}.
Recall that $\bpol(I)$ is a polarization of $I$ (see Section \ref{sec:intro})
and $Q_\bullet := \wS/(\Theta) \otimes_{\wS} \wP_\bullet$, where
$\Theta := \set{x_{i,1}-x_{i,j}}{1 \le i \le n, \ 2 \le j \le d}$,
becomes a minimal cellular resolution of $I$ with the same supporting CW complex
as $\wP_\bullet$. Of course, $Q_\bullet$ and the Eliahou-Kervaire resolution of $I$
are isomorphic, while their choice of $S$-free basis are different.
This difference gives rise to the one between the supporting CW complexes,
and in general they are not even homeomorphic to each other
(See (3) of Remark~\ref{rem:last_rem} or \cite[Example 6.2]{OY}).

In this section, we will prove that $\wP_\bullet$ is really supported
by a regular CW complex.
First, let us recall the construction of the resolution.

For simplicity, we set $\wI := \bpol(I)$.

\begin{dfn}[{\cite[Definition 2.1]{OY}}]\label{dfn:adm} 
For a finite subset $\wF \subset \NN \times \NN$ 
and a monomial $\m =\prod_{i=1}^e x_{\alpha_i} = \prod_{i=1}^n x_i^{a_i} \in G(I)$ with $1 \le \alpha_1 \le \alpha_2 \le \cdots \le \alpha_e \le n$, 
the pair $(\wF, \wm)$ is said to be {\it admissible} for $\wI$,
if $\wF = \varnothing$, or otherwise $\wF = \bra{\ijseq q}$ with the following
conditions: 
\begingroup
\renewcommand{\labelenumi}{(\alph{enumi})}
\begin{enumerate}
\item $1 \le i_1 < i_2 < \cdots < i_q < \max(\m)$, and
\label{enum:adm1}
\item $j_r = \max \{ \, l \mid \alpha_l \le i_r\, \} +1$ (equivalently, $j_r=1+\sum_{l=1}^{i_r} a_l$) for all $r$. 
\label{enum:adm2}
\end{enumerate}
\endgroup
\end{dfn}

Let $\m \in G(I)$ and $\wF = \bra{\ijseq q}$ with $i_1 < \dots < i_q$.
For integers $s, r$ with $1 \le s < \max(\m)$ and $1 \le r \le q$,
we define the set $\wF_{\ang{i_r}}$, and monomials $\fb_s(\m)$, $\m_{\ang{s}}$, and $\wm_{\ang{s}}$ as follows:
$$
\wF_{\ang{i_r}} := \bra{\ijseq{r-1}, \ijseq[r+1]q},
$$
$\fb_s(\m) := (\m /x_s ) \cdot x_k$, where $k = \min\set{k > s}{k \in \supp(\m)}$,
$\m_{\ang{s}}:= g(\fb_s(\m))$, and $\wm_{\ang{s}}:=\bpol(\m_{\ang{s}})$.
Note that in the above $\fb_s(\m)$ is indeed in $I$ since $I$ is Borel fixed.

Assume $(\wF,\wm)$ is admissible. Then we set
$$
B(\wF,\wm) := \set{i_r}{(\wF_{\ang{i_r}}, \wm_{\ang{i_r}}) \text{ is admissible}}.
$$

\begin{rem}
In \cite{OY}, the set $\wF_{\ang{i_r}}$ is denoted by $\wF_r$.
However we need to know explicitly which element is removed
in the argument in the next section.
Thus we prefer to write $\wF_{\ang{i_r}}$ rather than $\wF_r$.
\end{rem}

To grasp the image of admissible pairs,
it is helpful to draw a diagram of squares as follows:
for any pair $(\wF, \wm)$ with $\wF \subset \NN^2$ and $\wm \in G(\wI)$,
we put a white square in the $(i,j)$-th position for each $(i,j) \in \wF$
and a black square in the $(i',j')$-th position for each $x_{i',j'}$ dividing $\wm$.
If $\wF$ is the maximal subset of $\NN^2$ such that $(\wF, \wm)$
is admissible (such $\wF$ is unique), then the corresponding diagram forms
a ``right-side-down stairs'' with a sole black square in the bottom of each columns.
A pair $(\wF',\wm)$ is then admissible if and only if its diagram is given by
removing some white squares (those corresponding to $\wF \setminus \wF'$)
from the right-side-down stairs of the admissible $(\wF,\wm)$ with $\wF$ maximal.
For example, for $\wm=x_{1,1}x_{1,2}x_{4,3}x_{6,4}x_{6,5}
(= \bpol(x_1^2x_4x_6^2))$, $\wF=\bra{(1,3), (2,3), (3,3), (4,4),(5,4)}$, and
$\wF'=\bra{(1,3), (2,3), (5,4)}$, both of $(\wF,\wm)$ and $(\wF',\wm)$
are admissible and $\wF$ is the maximal subset.
The diagram of $(\wF,\wm)$ becomes as Figure \ref{fig:fm}
and that of $(\wF',\wm)$ as Figure \ref{fig:f'm}

\begin{figure}[h]
\setlength\unitlength{.4mm}
\begin{minipage}{.4\textwidth}
\begin{center}
\begin{picture}(80,80)(0,0)
\thicklines
\put(20,50){\shade\path(0,0)(0,10)(10,10)(10,0)(0,0)}
\put(30,50){\shade\path(0,0)(0,10)(10,10)(10,0)(0,0)}
\put(40,50){\whiten\path(0,0)(0,10)(10,10)(10,0)(0,0)}
\put(40,40){\whiten\path(0,0)(0,10)(10,10)(10,0)(0,0)}
\put(40,30){\whiten\path(0,0)(0,10)(10,10)(10,0)(0,0)}
\put(40,20){\shade\path(0,0)(0,10)(10,10)(10,0)(0,0)}
\put(50,20){\whiten\path(0,0)(0,10)(10,10)(10,0)(0,0)}
\put(50,10){\whiten\path(0,0)(0,10)(10,10)(10,0)(0,0)}
\put(50,0){\shade\path(0,0)(0,10)(10,10)(10,0)(0,0)}
\put(60,0){\shade\path(0,0)(0,10)(10,10)(10,0)(0,0)}
\put(0,0){\makebox(5,60){$i$}}
\put(18,77){\makebox(60,5){$j$}}
\put(10,3){$6$}
\put(10,13){$5$}
\put(10,23){$4$}
\put(10,33){$3$}
\put(10,43){$2$}
\put(10,53){$1$}
\put(23,66){$1$}
\put(33,66){$2$}
\put(43,66){$3$}
\put(53,66){$4$}
\put(63,66){$5$}
\end{picture}
\end{center}
\caption{}\label{fig:fm}
\end{minipage}
\begin{minipage}{.4\textwidth}
\begin{center}
\begin{picture}(80,80)(0,0)
\thicklines
\put(20,50){\shade\path(0,0)(0,10)(10,10)(10,0)(0,0)}
\put(30,50){\shade\path(0,0)(0,10)(10,10)(10,0)(0,0)}
\put(40,50){\whiten\path(0,0)(0,10)(10,10)(10,0)(0,0)}
\put(40,40){\whiten\path(0,0)(0,10)(10,10)(10,0)(0,0)}
\put(40,20){\shade\path(0,0)(0,10)(10,10)(10,0)(0,0)}
\put(50,10){\whiten\path(0,0)(0,10)(10,10)(10,0)(0,0)}
\put(50,0){\shade\path(0,0)(0,10)(10,10)(10,0)(0,0)}
\put(60,0){\shade\path(0,0)(0,10)(10,10)(10,0)(0,0)}
\put(0,0){\makebox(5,60){$i$}}
\put(18,77){\makebox(60,5){$j$}}
\put(10,3){$6$}
\put(10,13){$5$}
\put(10,23){$4$}
\put(10,33){$3$}
\put(10,43){$2$}
\put(10,53){$1$}
\put(23,66){$1$}
\put(33,66){$2$}
\put(43,66){$3$}
\put(53,66){$4$}
\put(63,66){$5$}
\end{picture}
\end{center}
\caption{}\label{fig:f'm}
\end{minipage}
\end{figure}

For an admissible pair $(\wF,\wm)$ with $\wF = \bra{\ijseq q}$
and $(i_r,j_r) \in \wF$, the operation
$(\wF,\wm) \mapsto (\wF_{\ang{i_r}},\wm)$ corresponds to removal of
the $(i_r,j_r)$-th white square from the diagram of $(\wF,\wm)$.
The operation $(\wF,\wm) \mapsto (\wF_{\ang{i_r}},\wm_{\ang{i_r}})$ is a little more complicated.
Recall that $\wm_{\ang{i_r}} = \bpol(g(\fb_{i_r}(\m)))$.
Ignoring the function $g$, the operation
$(\wF,\wm) \mapsto (\wF_{\ang{i_r}},\bpol(\fb_{i_r}(\m)))$
corresponds to removing the $(i_r,j_r)$-th white square
and moving the black square in the bottom of the $j_r$-th column
to the $(i_r,j_r)$-th position.
The diagram $(\wF_{\ang{i_r}},\wm_{\ang{i_r}})$ is given by
removing, from that of $(\wF_{\ang{i_r}},\bpol(\fb_{i_r}(\m)))$,
some black squares successively from the bottom right end.
It is easy to verify that if $(i_r,j_r) \in B(\wF,\wm)$,
then the white $(i_r,j_r)$-th square is in the lowest position
among those white in the $j_r$-th column.

Let $\prec$ be the lexicographic order defined in Section~\ref{sec:EKball}.
The following two lemmas are basic properties of admissible pairs.
See Lemmas 2.2, 2.3, 3.3 and the proof of Proposition 3.1 in \cite{OY}.

\begin{lem}[\cite{OY}]\label{lem:adm} 
Let $(\wF, \wm)$ with $\wF = \bra{\ijseq q}$ be an admissible pair.
For all $k$ with $1 \le k \le q$, the following hold:
\begin{enumerate}
\item $x_{i_k,j_k} = \lcm(\wm, \wm_{\ang{i_k}})/\wm$, and
\item $\m_{\ang{i_k}}$ and $\fb_{i_k}(\m)$ have the same exponents 
in the variables $x_l$ with $l \le i_k$; in particular, $\max(\m_{\ang{i_k}}) \ge i_k$
and $\m_{\ang{i_k}} \succ \m$.
\end{enumerate}
Assume $q \ge 2$. Then for integers $r,s$ with $1 \le r \neq s < q$, the following hold.
\begin{enumerate}
\setcounter{enumi}{2}
\item If $i_r \in B(\wF,\wm)$, then $i_r \in B(\wF_{\ang{i_s}},\wm)$.
\item Assume $i_r, i_s \in B(\wF,\wm)$.
Then $(\wm_{\ang{i_s}})_{\ang{i_r}} = (\wm_{\ang{i_r}})_{\ang{i_s}}$.
Moreover $i_r$ is in $B(\wF_{\ang{i_s}}, \wm_{\ang{i_s}})$ if and only if $i_s$ is in $B(\wF_{\ang{i_r}},\wm_{\ang{i_r}})$.

\item Assume $i_r \not\in B(\wF,\wm)$ and $i_s \in B(\wF,\wm)$.
Then $i_r$ is in $B(\wF_{\ang{i_s}},\wm_{\ang{i_s}})$ if and only if $i_r$ is in
$B(\wF_{\ang{i_s}},\wm)$.
If this is the case, then $\wm_{\ang{i_r}} = (\wm_{\ang{i_s}})_{\ang{i_r}}$.
\end{enumerate}
\end{lem}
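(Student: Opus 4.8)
The plan is to obtain all five assertions from the defining properties of $g$ recalled in Section~\ref{sec:EKball} (valid here, since a Borel fixed ideal is stable) together with the diagram‑of‑squares bookkeeping described after Definition~\ref{dfn:adm}. Two elementary facts carry most of the weight. First, for $\n\in I$ the sorted exponent sequence of $g(\n)$ is an initial segment of that of $\n$: by Lemma~\ref{lem:g} we have $\n=g(\n)\cdot q$ with $\min(q)\ge\max(g(\n))$, so every variable occurring in $q$ has index at least every variable occurring in $g(\n)$; hence $\bpol(g(\n))$ divides $\bpol(\n)$ and is obtained from it by deleting a suffix of columns --- this is exactly ``$g$ erases black squares from the bottom‑right end.'' Second, the operation $\m\mapsto\fb_{i_r}(\m)$ (before applying $g$) alters only two exponents of $\m$: it lowers by one the exponent of $x_{\alpha_{j_r}}$ and raises by one that of $x_{i_r}$; equivalently, in the diagram it moves the black square of column $j_r$ up from row $\alpha_{j_r}$ to row $i_r$, which is a strongly stable move and so keeps the result in $I$.

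I would prove (2) first. Write $\n:=\fb_{i_k}(\m)$ and $\m_0:=\m_{\ang{i_k}}=g(\n)$. If $\max(\m_0)<i_k$, then $\m_0$ involves no variable of index $\ge i_k$, so $\m_0$ divides $\prod_{l<i_k}x_l^{\deg_l(\m)}$ and hence divides the generator $\m$, forcing $\m_0=\m$; but $\m$ does not divide $\n=\fb_{i_k}(\m)$ (one exponent of $\m$ strictly drops), a contradiction. Hence $\max(\m_0)\ge i_k$, and then $\min(\n/\m_0)\ge\max(\m_0)\ge i_k$ gives $\deg_l(\m_0)=\deg_l(\n)$ for all $l<i_k$; the same ``divisor of a generator'' argument rules out $\deg_{i_k}(\m_0)<\deg_{i_k}(\n)$, so $\m_0$ and $\n$ agree in every variable of index $\le i_k$. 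The two ``in particular'' clauses follow at once: $\deg_{i_k}(\m_0)=\deg_{i_k}(\m)+1\ge1$ forces $\max(\m_0)\ge i_k$, and since $\m_0$ agrees with $\m$ below index $i_k$ but carries strictly more $x_{i_k}$, we get $\m_0\succ\m$.

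Assertion (1) then follows from the two facts above: the sorted exponent sequence of $\n=\fb_{i_k}(\m)$ is that of $\m$ with the $j_k$‑th entry changed from $\alpha_{j_k}$ to $i_k$ (it remains sorted, as $\alpha_{j_k-1}\le i_k<\alpha_{j_k}$), hence $\bpol(\n)=\wm\cdot x_{i_k,j_k}/x_{\alpha_{j_k},j_k}$, and by (2) column $j_k$ survives the deletion performed by $g$, so $\wm_{\ang{i_k}}=\bpol(\m_0)$ both divides $\wm\cdot x_{i_k,j_k}$ and is divisible by $x_{i_k,j_k}$; therefore $\lcm(\wm,\wm_{\ang{i_k}})=\wm\cdot x_{i_k,j_k}$. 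Assertion (3) is the easy one: deleting the white square $(i_s,j_s)$ leaves $\m$, and hence $\fb_{i_r}(\m)$, $\m_{\ang{i_r}}$, and $\wm_{\ang{i_r}}$, unchanged, while admissibility of a pair is visibly inherited under deletion of white squares; so admissibility of $(\wF_{\ang{i_r}},\wm_{\ang{i_r}})$ passes to $((\wF_{\ang{i_s}})_{\ang{i_r}},\wm_{\ang{i_r}})$, which is exactly the claim.

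Assertions (4) and (5) are the real work, and the interaction of the globally defined $g$ with the support‑dependence of $\fb$ is, I expect, the main obstacle. Taking $i_r<i_s$ without loss of generality, (2) applied at $i_s$ shows that $\m_{\ang{i_s}}$ and $\m$ have the same exponents in every variable of index $\le i_r$, so $i_r$ is assigned the same column index $j_r$ with respect to both $\wm$ and $\wm_{\ang{i_s}}$; no symmetric statement holds at $i_s$, which is exactly why the two cases of condition~(b) of Definition~\ref{dfn:adm} resurface in the ``moreover'' parts. I would reduce the identity $(\wm_{\ang{i_s}})_{\ang{i_r}}=(\wm_{\ang{i_r}})_{\ang{i_s}}$, via $\bpol$ and the relation $g(\m'g(\n))=g(\m'\n)$ from Section~\ref{sec:EKball}, to an equality $g(\fb_{i_r}\fb_{i_s}(\m))=g(\fb_{i_s}\fb_{i_r}(\m))$, after first isolating as a preliminary lemma that $\fb_{i_r}$ and $\fb_{i_s}$ commute on monomials of $I$ whenever $i_r\ne i_s$ (the pairs of variables they move being disjoint, or nested in a way that, by (2), does not interfere). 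The remaining $B$‑membership equivalences then reduce to evaluating condition~(b) on the doubly reduced monomials, again pinning the low‑degree exponents down by (2); the delicate point throughout is that $g$ could a priori ``see'' both modified columns at once, and the remedy is to use the two facts from the first paragraph to confine the effect of each operation to a region that the other leaves untouched.
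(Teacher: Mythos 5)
Your parts (1)--(3) are sound, and they correctly repair the obvious typo in the displayed formula for $\fb_s(\m)$ (it must be $x_s\cdot\m/x_k$, as your diagram reading and the paper's Borel-fixedness remark require). Note, however, that the paper itself offers no proof of this lemma --- it is quoted from \cite{OY} --- so your argument has to stand alone, and the burden of the statement lies exactly in (4) and (5), which you yourself call ``the real work'' but leave as a plan. That plan has a genuine flaw: the proposed preliminary lemma that $\fb_{i_r}$ and $\fb_{i_s}$ commute whenever $i_r\neq i_s$ is false. Take $i_r<i_s$ with $\supp(\m)\cap\{i_r+1,\dots,i_s\}=\void$, i.e.\ $j_r=j_s$ (two white squares in the same column). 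Then, writing $k=\min\set{l\in\supp(\m)}{l>i_s}$, one computes $\fb_{i_r}(\fb_{i_s}(\m))=x_{i_r}\m/x_k=\fb_{i_r}(\m)$, whose $x_{i_s}$-exponent is $0$, whereas $\fb_{i_s}(\fb_{i_r}(\m))$ is divisible by $x_{i_s}$; the two orders give different monomials. Under the hypothesis of (4) this configuration happens to be excluded (if $j_r=j_s$ with $i_r<i_s$, then condition (b) of Definition~\ref{dfn:adm} for $(i_s,j_s)$ fails with respect to $\m_{\ang{i_r}}$ because, by your (2), $\sum_{l\le i_s}\deg_l(\m_{\ang{i_r}})\ge j_r+0+1>j_s$, so $i_r\notin B(\wF,\wm)$), but you neither state nor prove this exclusion; and in (5) the same-column case with $i_r<i_s$ is precisely one of the cases that must be handled, so the commutation-based reduction breaks exactly where the assertion has content.

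Beyond that, two further steps are asserted rather than proved. First, the reduction of $(\wm_{\ang{i_s}})_{\ang{i_r}}$ to $\bpol\bigl(g(\fb_{i_r}(\fb_{i_s}(\m)))\bigr)$ is not a direct application of $g(\m' g(\n))=g(\m'\n)$: since $\fb_{i_r}$ divides by a variable determined by the support of its argument, you must check that this variable is the same for $\fb_{i_s}(\m)$ and for $g(\fb_{i_s}(\m))$ (and symmetrically with $r,s$ exchanged, where the truncation performed by $g$ occurs \emph{above} $i_s$ and is not controlled by (2)), and then absorb the cofactor $\fb_{i_s}(\m)/\m_{\ang{i_s}}$ using $g(x_i\n)=\n$ for $i\ge\max(\n)$. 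Second, the $B$-membership equivalences in (4) and (5) amount to verifying conditions (a) and (b) of Definition~\ref{dfn:adm} at \emph{every} remaining index $i_t$ of the smaller set, including $i_t>\max(i_r,i_s)$, where assertion (2) says nothing about how $g$ truncates the monomial; ``evaluating condition (b) on the doubly reduced monomials'' is a program, not an argument. So as submitted the proof establishes (1)--(3) but not (4) and (5).
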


Let $A_q^{\wI}$ be the set of all the admissible pairs $(\wF, \wm)$ with $\# \wF = q$,
and $\wP_q$ the free $\wS$-module with basis $\set{e(\wF,\wm)}{(\wF,\wm) \in A_q^{\wI}}$, that is,
$$
\wP_q := \bigoplus_{(\wF, \wm) \in A_q^{\wI}} \wS \cdot e(\wF,\wm),
$$
where the degree of $e(\wF,\wm)$ is set to be that of $\pr{\prod_{(k,l) \in \wF} x_{k,l}} \cdot \wm$.
Define the $\wS$-homomorphism $\partial_q : \wP_q \to \wP_{q-1}$ for $q \ge 1$ so that $e(\wF, \wm)$ 
with $\wF=\bra{\ijseq q}$ is sent to 
$$
\sum_{ 1 \le  r \le q } (-1)^r \cdot  x_{i_r, j_r} \cdot e(\wF_{\ang{i_r}}, \wm) -
\sum_{i_r \in B(\wF, \wm)} (-1)^r \cdot \frac{ x_{i_r, j_r} \cdot \wm}{\wm_{\ang{i_r}}}
\cdot e(\wF_{\ang{i_r}}, \wm_{\ang{i_r}}).
$$
Clearly, each $\partial_q$ is a degree-preserving homomorphism.

\begin{prop}[{\cite[Theorems 2.6 and 5.13]{OY}}]
The $\wS$-free modules $\wP_\bullet$ together with the degree-preserving
homomorphisms $\partial_\bullet$ gives a minimal $\ZZ^{n \times d}$-graded
free cellular resolution of $\bpol(I)$.
\end{prop}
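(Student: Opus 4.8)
The plan is to check, in order, that $\partial_\bullet$ is a minimal complex and then that the augmented complex is exact, hence a cellular resolution of $\wI=\bpol(I)$; only the last step requires real work. Minimality can be read off the formula: each entry of the matrix of $\partial_q$ is either $\pm x_{i_r,j_r}$ or $\pm\,x_{i_r,j_r}\wm/\wm_{\ang{i_r}}$, and for the latter Lemma~\ref{lem:adm}(1) gives $x_{i_r,j_r}\wm=\lcm(\wm,\wm_{\ang{i_r}})$, so the entry equals $\lcm(\wm,\wm_{\ang{i_r}})/\wm_{\ang{i_r}}$, which is therefore a genuine monomial and has positive degree because $\wm\neq\wm_{\ang{i_r}}$ by Lemma~\ref{lem:adm}(2) (where $\m_{\ang{i_r}}\succ\m$). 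Thus every entry of $\partial_q$ lies in the graded maximal ideal of $\wS$, so the moment $\wP_\bullet$ is shown to be a resolution it is automatically minimal; and $\wP_\bullet$ being a resolution of $\wI$ means precisely that the augmented complex $\cdots\to\wP_1\xrightarrow{\partial_1}\wP_0\xrightarrow{\epsilon}\wI\to0$, with $\epsilon(e(\varnothing,\wm))=\wm$, is exact.

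For $\partial_{q-1}\circ\partial_q=0$, fix $(\wF,\wm)\in A_q^{\wI}$ with $\wF=\bra{\ijseq q}$ and expand $\partial_{q-1}\partial_q(e(\wF,\wm))$ as a double sum over ordered pairs $(r,s)$ with $r\ne s$, each pair contributing up to four terms according as each of the two differentials is applied via its first sum (delete $i_r$, keep $\wm$) or its second sum (delete $i_r$, replace $\wm$ by $\wm_{\ang{i_r}}$). Collect the terms indexed by an unordered pair $\bra{i_r,i_s}$: the two terms coming from applying the first sum twice cancel because deleting $i_r$ first shifts the index of $i_s$, turning $(-1)^s$ into $(-1)^{s-1}$; the two terms mixing the first and second sums match by Lemma~\ref{lem:adm}(3) (which guarantees that both terms actually occur) together with Lemma~\ref{lem:adm}(5) (which yields $\wm_{\ang{i_r}}=(\wm_{\ang{i_s}})_{\ang{i_r}}$, so the monomial coefficients agree); and the two terms from applying the second sum twice cancel by Lemma~\ref{lem:adm}(4), which supplies both the symmetry $(\wm_{\ang{i_r}})_{\ang{i_s}}=(\wm_{\ang{i_s}})_{\ang{i_r}}$ and the equivalence $i_r\in B(\wF_{\ang{i_s}},\wm_{\ang{i_s}})\Leftrightarrow i_s\in B(\wF_{\ang{i_r}},\wm_{\ang{i_r}})$. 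This is the sign-chase of \cite[Theorem~2.1]{EK}, now carried out with the operation $\fb_{i_r}$ in place of multiplication by $x_{i_r}$.

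For exactness and cellularity simultaneously I would proceed as in \cite{OY} via algebraic discrete Morse theory \cite{BaWe, F}. View the Taylor complex of $\wI$ as the chain complex of the full simplex on $G(\wI)$ labelled by least common multiples, and exhibit an acyclic matching on it whose critical cells are exactly the admissible pairs $(\wF,\wm)$, the dictionary between a critical face $\sigma\subseteq G(\wI)$ and such a pair being governed by Lemma~\ref{lem:adm}(1), which records $\lcm$-differences as the variables $x_{i_k,j_k}$ indexed by $\wF$. Then the general machinery produces a $\ZZ^{n\times d}$-graded free resolution of $\wI$ supported on the resulting Morse CW complex, so it is at once cellular and acyclic; it remains only to compute its differential by enumerating the gradient paths and to verify that the outcome is the displayed $\partial_\bullet$, an identification that again runs entirely on Lemma~\ref{lem:adm}, since along a gradient path the only moves are $\wF\mapsto\wF_{\ang{i_r}}$ and $\wm\mapsto\wm_{\ang{i_r}}=\bpol(g(\fb_{i_r}(\m)))$. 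For exactness alone there is a shorter route by descent along $\Theta$: by \cite{Y} the set $\Theta$ is a regular sequence on $\wS$ with $\wS/(\Theta)\cong S$, so tensoring the short exact sequences $0\to\wP_\bullet\xrightarrow{\theta}\wP_\bullet\to\wP_\bullet/\theta\wP_\bullet\to0$ for $\theta\in\Theta$ and chasing the associated long exact homology sequences reduces the vanishing of $H_i(\wP_\bullet)$ for $i\ge1$ (then forced by graded Nakayama, that homology being finitely generated and annihilated by a positive-degree element) to the statement that $\wS/(\Theta)\otimes\wP_\bullet$ resolves $I$; this last complex has the same free modules as the Eliahou--Kervaire resolution of $I$, since admissibility condition~(b) makes the second index $j_r$ redundant, and one identifies it with the iterated mapping cone attached to the linear quotients of $I$, i.e. a Herzog--Takayama resolution \cite{HT}.

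The step I expect to be the main obstacle is exactly this: constructing the acyclic matching on the Taylor complex with precisely the admissible pairs as critical cells, and then running the gradient-path computation to show that the Morse differential has no unit entries and equals $\partial_\bullet$. Everything else is either formal or a bounded sign-chase organised by Lemma~\ref{lem:adm}.
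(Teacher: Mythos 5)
Two things to keep in mind when comparing: in this paper the proposition is not proved at all --- it is imported verbatim from \cite[Theorems 2.6 and 5.13]{OY}, and the introduction records that the proof there runs through Batzies--Welker discrete Morse theory. Your primary route (an acyclic matching on the Taylor complex of $\wI=\bpol(I)$ whose critical cells are exactly the admissible pairs, followed by a gradient-path computation identifying the Morse differential with $\partial_\bullet$) is therefore the same approach as the cited source, and your treatment of minimality and of $\partial_{q-1}\circ\partial_q=0$ via Lemma~\ref{lem:adm} is the expected bounded sign-chase. The problem is that you explicitly stop at the step that constitutes essentially all of the content of \cite{OY}: you neither specify the matching, nor verify acyclicity of the matching, nor carry out the gradient-path enumeration showing the Morse complex has the admissible pairs as bases and $\partial_\bullet$ as differential. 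As it stands this is a correct plan that reproduces the architecture of the known proof, but not a proof; the ``main obstacle'' you name is precisely the theorem.

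Two cautions about your proposed shortcut for exactness. First, the input you need from \cite{Y} is that $\Theta$ is a regular sequence on $\wS/\bpol(I)$ (regularity on $\wS$ is trivial for linearly independent linear forms); that is what makes the augmented complex behave well under reduction and what makes $\wS/(\Theta)\otimes_{\wS}\wI\cong I$. Second, the identification of $Q_\bullet=\wS/(\Theta)\otimes_{\wS}\wP_\bullet$ with the Eliahou--Kervaire resolution, or with a Herzog--Takayama iterated mapping cone, is not automatic: after killing $\Theta$ the second sum in $\partial_q$ involves $g(\fb_{i_r}(\m))$, not $g(x_{i_r}\m)$, and the sets $B(\wF,\wm)$ and $B^I(F,\m)$ differ accordingly. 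This paper itself emphasizes (Section~\ref{sec:regular} and Remark~\ref{rem:last_rem}(3)) that $Q_\bullet$ and the Eliahou--Kervaire resolution are isomorphic only after a nontrivial change of basis and are supported by different CW complexes in general, so the acyclicity of $Q_\bullet$ would itself require an argument (for instance that $\m\mapsto g(\fb_s(\m))$ is a decomposition function compatible with a linear-quotient order), and your descent along $\Theta$ with graded Nakayama only transfers acyclicity once that is in place.
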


To reach our goal, we will apply a technique similar to Clark's in Section \ref{sec:prel}.
For $(\wF,\wm) \in A_q^{\wI}$ and $(\wF',\wm') \in A_{q-1}^{\wI}$, define
$$
(\wF,\wm) > (\wF', \wm') \ \Iff\  (\wF', \wm') = (\wF_{\ang{i_r}}, \wm)
\text{ or }(\wF_{\ang{i_r}}, \wm_{\ang{i_r}}) \quad \exists i_r \in \wF.
$$
Taking the transitive closure, we obtain the order $<$ on $\bigcup_{q \ge 0} A_q^{\wI}$,
and thus $\bigcup_{q \ge 0} A_q^{\wI}$ becomes a poset.
Let $\gG_{\wP}$ be the poset given by additing the new least element $\hat 0$;
hence $\gG_{\wP} := (\bigcup_{q \ge 0} A_q^{\wI}) \cup \bra{\hat 0}$.

\begin{lem}[{\cite[Proposition 6.1]{OY}}]\label{lem:intv}
The poset $\gG_{\wP}$ is {\em thin}.
\end{lem}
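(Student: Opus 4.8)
The plan is to verify directly that every interval of length $2$ in $\gG_{\wP}$ has exactly four elements, i.e. that whenever $(\wF,\wm) \gtrdot (\wF'',\wm'') $ and $(\wF',\wm') \gtrdot (\wF'',\wm'')$ are two distinct covering relations below a fixed $(\wF,\wm)$, there is exactly one other element $z$ with $(\wF,\wm) \gtrdot z \gtrdot (\wF'',\wm'')$, and symmetrically going upward. Since $\gG_{\wP}$ is pure (ranked by $\#\wF$, with $\hat 0$ at the bottom), an interval $[(\wG,\wn),(\wF,\wm)]$ of length $2$ is determined by choosing $(\wF,\wm)$ and two distinct covering-down steps from it, so it suffices to check: given $(\wF,\wm)$ with $\wF = \bra{\ijseq q}$ and two indices $i_r \neq i_s$ in $\wF$, and a choice for each of the two possible ``descendants'' at $i_r$ (namely $(\wF_{\ang{i_r}},\wm)$ or $(\wF_{\ang{i_r}},\wm_{\ang{i_r}})$ when the latter is admissible, i.e. $i_r \in B(\wF,\wm)$) and likewise at $i_s$, the two chosen length-$1$ descents have a common lower bound reached in exactly one way through the ``other'' middle element. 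There are four cases according to which of $i_r, i_s$ lie in $B(\wF,\wm)$.

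First I would dispatch the ``easy'' case where both descents are of the pure-deletion type $(\wF,\wm) \mapsto (\wF_{\ang{i}},\wm)$: then the common lower element is $(\wF_{\ang{i_r,i_s}},\wm)$ (with $\wF_{\ang{i_r,i_s}}:=\wF \setminus \bra{i_r,i_s}$, which is still admissible since condition (b) of Definition~\ref{dfn:adm} is inherited by subsets), and the unique other middle element is obtained by deleting the two white squares in the other order — uniqueness is immediate because a pair $(\wF_{\ang{i_r,i_s}},\wm)$ covers only the pairs got by re-inserting one admissible white square. Next, the mixed case: one descent is $(\wF_{\ang{i_r}},\wm_{\ang{i_r}})$ with $i_r \in B(\wF,\wm)$, the other $(\wF_{\ang{i_s}},\wm)$ with $i_s \notin B(\wF,\wm)$. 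Here parts (3) and (5) of Lemma~\ref{lem:adm} are exactly what is needed: (3) gives $i_r \in B(\wF_{\ang{i_s}},\wm)$, so $(\wF_{\ang{i_r,i_s}},\wm_{\ang{i_r}})$ is legitimate and lies below $(\wF_{\ang{i_s}},\wm)$; and (5) says $i_s \in B(\wF_{\ang{i_r}},\wm_{\ang{i_r}})$ iff $i_s \in B(\wF_{\ang{i_r}},\wm)$, i.e. never (since $i_s \notin B(\wF,\wm)$, one checks membership is unaffected), so the only middle element below $(\wF_{\ang{i_r}},\wm_{\ang{i_r}})$ and above the common bottom is the deletion $(\wF_{\ang{i_r,i_s}},\wm_{\ang{i_r}})$ — again unique. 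Finally, the case $i_r, i_s \in B(\wF,\wm)$ with both descents of type $\wm \mapsto \wm_{\ang{\cdot}}$: by part (4) of Lemma~\ref{lem:adm}, $(\wm_{\ang{i_s}})_{\ang{i_r}} = (\wm_{\ang{i_r}})_{\ang{i_s}}$ and the two iterated-$B$ conditions are equivalent, so there is a well-defined common lower element and the diamond closes; when $i_r \in B(\wF,\wm)$ but we pick the pure deletion at $i_r$ while taking $\wm_{\ang{i_s}}$ at $i_s$, part (3) again supplies $i_s \in B(\wF_{\ang{i_r}},\wm)$. In every subcase one must also confirm that there are no \emph{extra} middle elements, which follows because a length-$1$ descent from $(\wF,\wm)$ is indexed by a single $i \in \wF$ together with a binary choice ($\wm$ vs. $\wm_{\ang{i}}$), and only the index occurring in the target's ``difference'' can be the missing one.

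The step I expect to be the genuine obstacle is bookkeeping the subcase analysis so that it is visibly exhaustive: one has to be careful that ``$z$ covers $(\wF'',\wm'')$ and is covered by $(\wF,\wm)$'' really does force $z$ to be one of the finitely many candidates I have listed, rather than some pair obtained by an unrelated admissible move — this is where the precise combinatorics of the diagram (the white square removed at $i$ is the lowest white square in its column, and passing from $\wm$ to $\wm_{\ang{i}}$ only changes exponents in variables $x_l$ with $l \le i$, by part (2) of Lemma~\ref{lem:adm}) must be invoked to pin down degrees and hence the monomial components. A clean way to organize this is to note that the multidegree of a middle element $z=(\wF_z,\wn_z)$ is squeezed between those of $(\wF'',\wm'')$ and $(\wF,\wm)$, and the differences of multidegrees along covers are the squarefree monomials $x_{i,j}$ (resp. $x_{i_r,j_r}\wm/\wm_{\ang{i_r}}$) from the differential; comparing these determines which index was removed and which monomial appears, leaving at most the one alternative middle element in each case. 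Once the four cases are checked, thinness — and hence the lemma — follows, and this is already essentially the content of \cite[Proposition 6.1]{OY}, so I would present the argument as a streamlined reorganization of that proof emphasizing the role of Lemma~\ref{lem:adm}(3)--(5).
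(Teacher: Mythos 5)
The first thing to note is that the paper does not prove Lemma~\ref{lem:intv} at all: it is imported verbatim from \cite[Proposition 6.1]{OY}, so what you are offering is a re-proof of that result. As a re-proof it has a genuine gap. You parametrize the length-$2$ intervals by a choice of two distinct indices $i_r\neq i_s$ of $\wF$ together with a binary descent choice at each, and in the mixed case you rely on the parenthetical claim that $i_s\notin B(\wF,\wm)$ forces $i_s\notin B(\wF_{\ang{i_r}},\wm)$ (``membership is unaffected''). That is false: Lemma~\ref{lem:adm}(3) is only a one-way implication, and deleting a square can \emph{enlarge} $B$ --- Lemma~\ref{lem:adm}(5) exists precisely to handle indices that enter $B$ only after another square is removed. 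Concretely, if $i_s\in B(\wF,\wm)$, $i_r\notin B(\wF,\wm)$, but $i_r\in B(\wF_{\ang{i_s}},\wm)$, then $w:=((\wF_{\ang{i_s}})_{\ang{i_r}},\wm_{\ang{i_r}})$ lies two steps below $(\wF,\wm)$, and the two middle elements of $[w,(\wF,\wm)]$ are $(\wF_{\ang{i_s}},\wm)$ and $(\wF_{\ang{i_s}},\wm_{\ang{i_s}})$ --- both obtained by removing the \emph{same} square $(i_s,j_s)$. Already $I=(x_1,x_2,x_3^2)$, $\wm=x_{3,1}x_{3,2}$, $\wF=\bra{(1,1),(2,1)}$ realizes this: the interval $[(\void,x_{1,1}),(\wF,\wm)]$ has middles $(\bra{(1,1)},\wm)$ and $(\bra{(1,1)},x_{2,1})$. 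No pair ``two distinct indices plus a descent choice available at the top'' produces such an interval (the descent $(\wF_{\ang{i_r}},\wm_{\ang{i_r}})$ does not even exist there), so your case list is not exhaustive, and your ``no extra middles'' check is never performed for these intervals; for the same reason your claim that a length-$2$ interval is ``determined by'' a choice of two covers of its top is unjustified.

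The repair is to organize the argument by the interval rather than by pairs of descents: fix the bottom $(\wG,\wn)$ and top $(\wF,\wm)$ with $\#(\wF\setminus\wG)=2$, observe that any middle element must have first component $\wF_{\ang{i_r}}$ or $\wF_{\ang{i_s}}$ and monomial $\wm$, $\wm_{\ang{i_r}}$ or $\wm_{\ang{i_s}}$ (at most four candidates), and then decide which two occur according to whether $\wn$ is $\wm$, $\wm_{\ang{i_r}}$, $\wm_{\ang{i_s}}$ or $(\wm_{\ang{i_r}})_{\ang{i_s}}=(\wm_{\ang{i_s}})_{\ang{i_r}}$; here Lemma~\ref{lem:adm}(1)--(2) and the diagram description rule out coincidences such as $\wm_{\ang{i_r}}=\wm_{\ang{i_s}}$ (e.g.\ $x_{i_r,j_r}$ divides $\wm_{\ang{i_r}}$ but not $\wm_{\ang{i_s}}$), while parts (3)--(5) match the surviving candidates, including the configuration exhibited above. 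You should also dispose, trivially, of the length-$2$ intervals with bottom $\hat 0$, i.e.\ $[\hat 0,(\wF,\wm)]$ with $\#\wF=1$, whose two middles are $(\void,\wm)$ and $(\void,\wm_{\ang{i_1}})$; your scheme, which needs two indices in $\wF$, does not see these either. With that reorganization the argument becomes, in substance, the proof of \cite[Proposition 6.1]{OY} that the present paper simply cites.
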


Since the coefficients of the differential maps in $\wP_\bullet$ consists only of $\pm 1$,
we can deduce that $\wP_\bullet$ is supported by a regular CW complex
if the poset $\gG_\wP$ is isomorphic to the face poset of
a regular CW complex.
Thus we shall show $\gG_\wP$ is CW.
To prove this, we will make use of Proposition \ref{prop:CW}.
It is clear that $\#\gG_\wP \ge 2$, and as is stated above,
$\gG_\wP$ has the least element $\hat 0$.
Besides, by Lemma~\ref{lem:intv}, it is thin.
Since $\gG_\wP$ is finite, only the shellability of each interval $[\hat 0, \gs]$
is not trivial.
Let $\gG^*_\wP$ be the {\em dual} poset of $\gG_\wP$.
The order on $\gG^*_\wP$ is denoted by $<^\ast$.
We define the edge labeling $\gl: \cC^1(\gG^*_\wP) \to \ZZ$ as follows:
$\gl((\wF,\wm) \lessdot^\ast \hat 0) = 0$ and
\begin{align*}
\gl((\wF,\wm) \lessdot^\ast (\wF',\wm')) :=\begin{cases}
-i_r & \text{ if $\wF' = \wF_{\ang{i_r}}$ and $\wm' = \wm$} \\
i_r & \text{ if $\wF' = \wF_{\ang{i_r}}$ and $\wm' = \wm_{\ang{i_r}}$,}
\end{cases}
\end{align*}
where $\wF = \bra{\ijseq q}$.
By the definition of $\gl$, any unrefinable chain $c$ can be reconstructed uniquely
from $\gl(c)$ and the first component of $c$,
and if $\gl(c)$ contains $0$, then it always comes in last.

Let $c: c_0 \lessdot^\ast c_1 \lessdot^\ast \cdots \lessdot^\ast c_q$ be
an unrefinable chain with $\gl(c) = (\gl_1,\dots ,\gl_q) \in \ZZ^q$.
The set $\cC^q([c_0,c_q]_{\gG^*_{\wP}})$ then consists of all the unrefinable chains
$c': c'_0 \lessdot^\ast c'_1 \lessdot^\ast \cdots \lessdot^\ast c'_q$ with $c'_0 = c_0$
and $c'_q = c_q$.
For convenience, we will use the following terminology:
\begin{itemize}
\item for an integer $\gl'_r$, we say $\gl_r$ {\em can be replaced by} $\gl_r'$
if
$$
\gl(c') = (\gl_1,\dots ,\gl_{r-1},\gl_r',\gl_{r+1},\dots ,\gl_q),
$$
for some $c' \in \cC^q([c_0,c_q]_{\gG^\ast_{\wP}})$
\item two entries $\gl_s, \gl_r$ with $s < r$ are {\em commutative} if 
$$
\gl(c') = (\gl_1,\dots ,\gl_{s-1}, \gl_r, \gl_{s+1},\dots ,\gl_{r-1} , \gl_s, \gl_{r+1}, \cdots ,\gl_q)
$$
for some $c' \in \cC^q([c_0,c_q]_{\gG^\ast_{\wP}})$, and
\item the entry $\gl_r$ is {\em shiftable} to the $s$-th position if
$$
\gl(c') = (\gl_1,\dots, \gl_{s-1}, \gl_r ,\gl_s, \dots ,\gl_{r-1}, \gl_{r+1}, \dots ,\gl_q)
$$
for some $c' \in \cC^q([c_0,c_q]_{\gG^\ast_{\wP}})$
\end{itemize}

\begin{lem}\label{lem:label_lem}
Let $c \in \cC^q(\gG^*_{\wP})$ and assume $q \ge 2$.
Set $\gl(c) := (\gl_1 ,\dots ,\gl_q)$.
The following hold.
\begin{enumerate}
\item Assume $r \ge 2$ and $\gl_r$ is negative.
Then contiguous entries $\gl_{r-1}, \gl_r$ are commutative.
In particular, any negative entry $\gl_r$ is shiftable to any position in the left.
\label{enum:caseneg}
\item Assume $\gl_{r-1} >0$. If $\gl_{r-1}$ and $\gl_r$ are not commutative
(hence $\gl_r \ge 0$ by \eqref{enum:caseneg}),
then $\gl_{r-1}$ can be replaced by $-\gl_{r-1}$.
\end{enumerate}
\end{lem}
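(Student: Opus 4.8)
The plan is to derive both statements directly from the explicit form of the covering relations of $\gG_\wP$, using parts (3), (4) and (5) of Lemma~\ref{lem:adm}. The guiding picture is that a cover $c_{k-1}\lessdot^\ast c_k$ with label $-i$ merely deletes from $\wF$ the pair with first coordinate $i$ and fixes $\wm$, whereas a cover with label $i>0$ (so $i\in B(\wF,\wm)$) deletes that pair and passes from $\wm$ to $\wm_{\ang i}$. In each case below I shall rewrite only the two covers at positions $r-1$ and $r$, leaving $c_{r-2}$ and $c_r$ — and hence the whole chain outside these two steps — unchanged, so the modified chain automatically lies in $\cC^q([c_0,c_q]_{\gG^*_\wP})$; and I use throughout that deleting white squares from an admissible pair keeps it admissible (see the remarks after Definition~\ref{dfn:adm}).

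For \eqref{enum:caseneg}, put $c_{r-2}=(\wF'',\wm'')$ and let $\gl_r=-i$, so the cover $c_{r-1}\lessdot^\ast c_r$ deletes $i$ from the $\wF$-part of $c_{r-1}$, which is $\wF''_{\ang j}$ where $\pm j=\gl_{r-1}$; in particular $i\in\wF''$ and $i\neq j$. If $\gl_{r-1}=-j$, the monomial equals $\wm''$ along both the old and the rewritten step, and one simply deletes $i$ first and $j$ second. If $\gl_{r-1}=j>0$, so $j\in B(\wF'',\wm'')$, then deleting $i$ first is legitimate, part (3) of Lemma~\ref{lem:adm} gives $j\in B(\wF''_{\ang i},\wm'')$, and the ensuing substitution $+j$ still reaches the same vertex $c_r$. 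Either way $\gl_{r-1}$ and $\gl_r$ are commutative. The ``in particular'' clause follows by iteration, since a negative entry remains negative under each such transposition and can therefore be moved one position to the left at a time.

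For (2), put $c_{r-2}=(\wF'',\wm'')$, so $c_{r-1}=(\wF''_{\ang j},\wm''_{\ang j})$ with $\gl_{r-1}=j>0$ and $j\in B(\wF'',\wm'')$. By \eqref{enum:caseneg} the non-commutativity hypothesis forces $\gl_r\ge 0$. If $\gl_r=0$, then $\wF''$ is a singleton and $c_r=\hat 0$, and replacing $+j$ by the deletion $-j$ is immediate. So suppose $\gl_r=i>0$, $i\neq j$, with $i\in B(\wF''_{\ang j},\wm''_{\ang j})$ and $c_r=((\wF''_{\ang j})_{\ang i},(\wm''_{\ang j})_{\ang i})$. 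If $i\in B(\wF'',\wm'')$, then part (4) of Lemma~\ref{lem:adm} gives $j\in B(\wF''_{\ang i},\wm''_{\ang i})$ and $(\wm''_{\ang j})_{\ang i}=(\wm''_{\ang i})_{\ang j}$, so one may perform $+i$ first and $+j$ second and reach the same $c_r$, making $\gl_{r-1},\gl_r$ commutative — contrary to hypothesis. Hence $i\notin B(\wF'',\wm'')$, and part (5) of Lemma~\ref{lem:adm} (with $i_r=i$, $i_s=j$) yields both $i\in B(\wF''_{\ang j},\wm'')$, so that after the deletion $-j$ the substitution $+i$ is available, and $\wm''_{\ang i}=(\wm''_{\ang j})_{\ang i}$, which guarantees that this rerouted two-step chain still ends at $c_r$. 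Thus $\gl_{r-1}$ is replaced by $-\gl_{r-1}$, with $\gl_r$ left intact.

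I expect the main obstacle to be the last case of (2): one must be sure that, after substituting the deletion step $-j$ for the substitution step $j$ at position $r-1$, the following substitution step $i$ out of $(\wF''_{\ang j},\wm'')$ lands on precisely the original vertex $c_r$ — this is exactly the identity $\wm''_{\ang i}=(\wm''_{\ang j})_{\ang i}$ in Lemma~\ref{lem:adm}(5), so the real point is to see that the hypotheses of that clause, namely $i\in B(\wF''_{\ang j},\wm''_{\ang j})$ and $i\notin B(\wF'',\wm'')$, are exactly the configuration in force. A secondary point is the use of Lemma~\ref{lem:adm}(4) to exclude the case $i\in B(\wF'',\wm'')$, where one must combine both $B$-membership conclusions with the commutation of the operations $\wm\mapsto\wm_{\ang i}$ and $\wm\mapsto\wm_{\ang j}$ to confirm that the chain ``$+i$ then $+j$'' genuinely terminates at $c_r$.
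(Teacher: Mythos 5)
Your proof is correct and follows essentially the same route as the paper's: you alter only the middle vertex $c_{r-1}$, treat the sign cases separately, and invoke parts (3), (4), (5) of Lemma~\ref{lem:adm} exactly where the paper does (with (3) made explicit where the paper says ``it is easy to verify''). No gaps to report.
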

\begin{proof}
Let $c_0 \lessdot^\ast c_1 \lessdot^\ast \cdots \lessdot^\ast c_q$ be the unrefinable chain $c$.
We set $c_{r-2} = (\wF,\wm)$ and $\wF = \bra{\ijseq p}$.
By the definition of the order $<^\ast$,
it follows that $c_{r-1} = (\wF_{\ang{i_k}},\wm')$ and
$c_r = ((\wF_{\ang{i_k}})_{\ang{i_l}}, \wm'')$ for some $k, l$ with $1 \le k \neq l < p$ and some monomials $\wm', \wm'' \in G(\wI)$.

(1) The second assertion is an immediate consequence of the first.
We will show the first assertion.
Note that the negativity of $\gl_r$ implies $c_{r-1} \neq \hat 0$, $c_r \neq \hat 0$,
and $\wm' = \wm''$.
Thus $c_r = ((\wF_{\ang{i_k}})_{\ang{i_l}}, \wm')$.
Obviously $(\wF_{\ang{i_l}} ,\wm)$ is admissible.
Set $c'_{r-1} = (\wF_{\ang{i_l}}, \wm)$.
Then $c_{r-2} \lessdot^\ast c'_{r-1} \lessdot^\ast c_r$ holds, and
it is easy to verify that
$\gl(c_{r-2} \lessdot^\ast c'_{r-1} \lessdot^\ast c_r) = (\gl_r, \gl_{r-1})$.
Thus the unrefinable chain $c'$, given by replacing $c_{r-1}$
by $c'_{r-1}$ in $c$, belongs to $\cC^q([c_0,c_q]_{\gG_{\wP}^\ast})$
and $\gl(c')$ is just the vector given by interchanging $\gl_{r-1}$ and $\gl_r$
in $\gl(c)$.
Therefore $\gl_{r-1}$ and $\gl_r$ are commutative.

(2) The assertion is clear when $\gl_r = 0$.
Assume $\gl_r > 0$. The positivity of $\gl_{r-1}$ and $\gl_r$ imply
$c_r \neq \hat 0$, $c_{r-1} = (\wF_{\ang{i_k}}, \wm_{\ang{i_k}})$,
and $c_r = ((\wF_{\ang{i_k}})_{\ang{i_l}}, (\wm_{\ang{i_k}})_{\ang{i_l}})$.
Hence $i_k \in B(\wF,\wm)$ and $i_l \in B(\wF_{\ang{i_k}}, \wm_{\ang{i_k}})$.
Suppose $i_l \in B(\wF,\wm)$. Then it follows
from Lemma~\ref{lem:adm} that
$i_k \in B(\wF_{\ang{i_l}},\wm_{\ang{i_l}})$ and $(\wm_{\ang{i_k}})_{\ang{i_l}} = (\wm_{\ang{i_l}})_{\ang{i_k}}$.
Thus $c_r = ((\wF_{\ang{i_l}})_{\ang{i_k}}, (\wm_{\ang{i_l}})_{\ang{i_k}})$.
Replacing $c_{r-1}$ by $(\wF_{\ang{i_l}},\wm_{\ang{i_l}})$ in $c$,
we have an unrefinable chain $c'$ whose $\gl(c')$ is equal to the vector
given by interchanging $\gl_{r-1}$ and $\gl_r$ in $\gl(c)$.
This contradicts the hypothesis that $\gl_{r-1}$ and $\gl_r$ are not commutative.
Thus $i_l \not\in B(\wF,\wm)$.
Since $i_l \in B(\wF_{\ang{i_k}},\wm_{\ang{i_k}})$,
applying Lemma~\ref{lem:adm}, we see that
$i_l \in B(\wF_{\ang{i_k}},\wm)$ and $\wm_{\ang{i_l}} = (\wm_{\ang{i_k}})_{\ang{i_l}}$.
Hence $c_{r-2} \lessdot^\ast c'_{r-1} \lessdot^\ast c_r$, where $c'_{r-1} = (\wF_{\ang{i_k}},\wm)$.
Let $c'$ be the unrefinable chain given by replacing $c_{r-1}$ by $c'_{r-1}$ in $c$.
It is easy to show that $\gl(c')$ is equal to the vector
given by replacing $\gl_{r-1} = i_k$ by $-i_k$.
\end{proof}

Let $c: c_0 \lessdot^\ast c_1 \lessdot^\ast \cdots \lessdot^\ast c_q$ be
an unrefinable chain in $\gG^*_{\wP}$.
Assume $c_q \neq \hat 0$.
Set $c_0 = (\wF,\wm)$ and $c_q = (\wF',\wm')$,
and let $\wF \setminus \wF' = \bra{(i_1,j_1), \dots, (i_q,j_q)}$.
We set $\gl_+(c) := \set{k \in \gl(c)}{k > 0}$
and define the squarefree monomial $\wu(c)$ of $\wS$
as follows:
$$
\wu(c) := \prod_{i_r \in \gl_+(c)} x_{i_r,j_r}.
$$
\begin{lem}\label{lem:label_unique}
Let $c: c_0 \lessdot^\ast c_1 \lessdot^\ast \cdots \lessdot^\ast c_q$ be
an increasing unrefinable chain with $c_q \neq \hat 0$,
$c_0 = (\wF,\wm)$, and $c_q = (\wF',\wm')$.
Then it follows that
$$
\wu(c)= \frac{\lcm(\wm,\wm')}{\wm}.
$$
\end{lem}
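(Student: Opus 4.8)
The plan is to show that $\wu(c)$ and $\lcm(\wm,\wm')/\wm$ have the same support as monomials of $\wS$; since $\wu(c)$ is squarefree, this forces the two monomials to coincide. The argument runs parallel to the proof of Lemma~\ref{lem:lcm}, but it exploits the squarefreeness of every $\bpol(\n)$ in an essential way.

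First I would reduce to the case in which $\gl(c)$ has only positive entries. Since $c$ is increasing and $c_q\neq\hat 0$ (so no entry of $\gl(c)$ is $0$), all negative entries of $\gl(c)$ precede all positive ones, and each step carrying a negative label fixes the monomial component of the chain. Hence, if $c_s$ denotes the last vertex of the negative part of $c$, then $c_s$ has the form $(\wF'',\wm)$ for some $\wF''$, the subchain $c_s\lessdot^\ast\cdots\lessdot^\ast c_q$ is again increasing, runs from $\wm$ to $\wm'$, and has the same associated monomial $\wu$; so we may assume $c$ itself has all labels positive, say $\gl(c)=(k_1,\dots,k_q)$ with $0<k_1<\cdots<k_q$ (strict because the first coordinates of the elements of $\wF\setminus\wF'$ are pairwise distinct, by Definition~\ref{dfn:adm}), the case of no positive label being trivial. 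Writing $c_r=(\wF^{(r)},\wm^{(r)})$ and $(k_r,l_r)$ for the element of $\wF$ removed at the $r$-th step, we have $\wm^{(r)}=\wm^{(r-1)}_{\ang{k_r}}$ and $\wu(c)=\prod_{r=1}^{q}x_{k_r,l_r}$, and admissibility of $(\wF,\wm)$ gives $l_r=1+\sum_{p\le k_r}\deg_p(\m)$, where $\wm=\bpol(\m)$.

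One inclusion comes cheaply. By Lemma~\ref{lem:adm}(1) we have $x_{k_r,l_r}=\lcm(\wm^{(r-1)},\wm^{(r)})/\wm^{(r-1)}$, so $\wm^{(r)}$ divides $\wm^{(r-1)}\,x_{k_r,l_r}$; telescoping over $r=1,\dots,q$ yields $\wm'\mid\wm\cdot\prod_{r}x_{k_r,l_r}=\wm\cdot\wu(c)$, whence, comparing supports of squarefree monomials, $\supp(\wm')\setminus\supp(\wm)\subseteq\supp(\wu(c))$, i.e.\ $\lcm(\wm,\wm')/\wm$ divides $\wu(c)$. For the reverse inclusion I must check that each $x_{k_r,l_r}$ divides $\wm'$ but not $\wm$. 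Non-divisibility of $x_{k_r,l_r}$ into $\wm=\bpol(\m)$ is immediate from $\sum_{p\le k_r}\deg_p(\m)=l_r-1$: writing $\m=x_{\alpha_1}\cdots x_{\alpha_{\deg\m}}$ with $\alpha_1\le\cdots$, this says $\alpha_{l_r}>k_r$, so $x_{k_r,l_r}\nmid\bpol(\m)$. For divisibility into $\wm'$, the identity above also shows $x_{k_r,l_r}\in\supp(\wm^{(r)})$ (it divides the lcm but not the squarefree monomial $\wm^{(r-1)}$); and for each later step $t>r$, Lemma~\ref{lem:adm}(2) says $\m^{(t)}$ and $\fb_{k_t}(\m^{(t-1)})$ agree in the variables $x_p$ with $p\le k_t$, which, together with the fact that $\fb_{k_t}(\m^{(t-1)})$ agrees with $\m^{(t-1)}$ below index $k_t$, forces $\m^{(t)}$ and $\m^{(t-1)}$ to agree in every $x_p$ with $p\le k_r$ (here $k_r<k_t$ is used). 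Since whether $x_{k_r,l_r}$ divides $\bpol(\nu)$ depends only on the numbers $\deg_p(\nu)$ with $p\le k_r$, this gives $x_{k_r,l_r}\mid\wm^{(t-1)}\Rightarrow x_{k_r,l_r}\mid\wm^{(t)}$, and induction on $t$ from $r$ up to $q$ yields $x_{k_r,l_r}\mid\wm^{(q)}=\wm'$. Combining the two inclusions and using that $\wu(c)$ is squarefree gives $\wu(c)=\lcm(\wm,\wm')/\wm$.

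The main obstacle is the reverse inclusion, and within it the persistence statement that the variable $x_{k_r,l_r}$ produced at step $r$ is not destroyed by any later step of the chain. What makes this go through — and what genuinely uses that $c$ is increasing — is the strict inequality $k_r<k_t$ for $t>r$: by Lemma~\ref{lem:adm}(2) it confines the later operations $\fb_{k_t}$, and the subsequent applications of $g$, to a part of the exponent vector that is irrelevant to divisibility by $x_{k_r,l_r}$. I would also dispatch the routine edge cases — the empty chain $q=0$, and the verification that every vertex of the reduced chain is a genuine admissible pair so that Lemma~\ref{lem:adm} applies at each step — but these cause no trouble.
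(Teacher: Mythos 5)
Your proof is correct and follows essentially the same route as the paper's: the same reduction to an all-positive label chain via increasingness, and the same two ingredients, namely Lemma~\ref{lem:adm}(1) for the single-step identity $x_{k_r,l_r}=\lcm(\wm^{(r-1)},\wm^{(r)})/\wm^{(r-1)}$ and Lemma~\ref{lem:adm}(2) (together with $k_r<k_t$) for the fact that later steps only disturb indices above $k_r$. The only difference is bookkeeping: you prove the equality by a two-sided divisibility/support comparison of squarefree monomials with a per-variable persistence induction along the chain, whereas the paper runs one induction on the chain length $q$ and manipulates lcm's, your persistence step appearing there as the observation that $\wu(c_{\le q-1})$ divides $\wm'$ because $\deg_k \m'=\deg_k \n$ for all $k<i_q$.
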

\begin{proof}
If every entry in $c$ is negative, then $\wu(c) = 1$ and $\wm = \wm'$.
Hence the assertion holds.
Suppose not. Then all the entries in $\gl(c)$ are positive,
or otherwise there exists $r$ with $1 \le r < q$
such that $\gl(c_{i-1} \lessdot^\ast c_i) < 0$ if $1 \le i \le r$
and $\gl(c_{i-1} \lessdot^\ast c_i) > 0$ if $r + 1 \le i \le q$,
since the chain $c$ is increasing.
In the latter case, setting $c_{\ge r}$ to be the increasing chain
$c_r \lessdot^\ast \dots \lessdot^\ast c_q$,
it follows that $\wu(c) = \wu(c_{\ge r})$ and $\wm = \wm''$,
where $\wm''$ is the monomial corresponding to $c_r$.
Thus it suffices to consider only the case where
each entries in $\gl(c)$ are positive.
Assume all the entries in $\gl(c)$ are positive.
Let $\wF \setminus \wF' = \bra{(i_1,j_1),\dots ,(i_q,j_q)}$ with $i_1 < \dots < i_q$.
Since $c$ is increasing, it follows that
$\gl(c) = (i_1, \dots ,i_q)$.
We will show
$$
\wm \cdot \wu(c) = \lcm(\wm,\wm'),
$$
by the induction on $q$, which completes the proof.
When $q = 1$, the assertion follows from Lemma~\ref{lem:adm}.
Assume $q > 1$. Let $c_{\le q-1}$ be the increasing chain
$c_0 \lessdot^\ast \cdots \lessdot^\ast c_{q-1}$.
We set $c_{q-1} = (\wG,\wn)$.
Since $\wu(c) = \wu(c_{\le q-1}) \cdot x_{i_q,j_q}$,
applying the inductive hypothesis, we have the following equalities:
$$
\wm \cdot \wu(c) = \wm \cdot \wu(c_{\le q-1}) \cdot x_{i_q,j_q}
             = \lcm(\wm,\wn) \cdot x_{i_q,j_q}.
$$
Recall that $\wm'=\bpol(\m')$ and $\wn=\bpol(\n)$.
By the definition of an admissible pair, each variable $x_{i_r,j_r}$
with $1 \le r \le q-1$ does not divide $\wm$
and moreover $x_{i_q,j_q}$ does neither $\wm$ nor $\wn$.
Hence $\wu(c_{\le q-1}) = \prod_{s=1}^{q-1} x_{i_s,j_s}$ divides $\wn$.
On the other hand, since $\m' = \n_{\ang{i_q}}$, it follows that
$\deg_k \m' =\deg_k \n$ for all $k < i_q$, and
hence $\wu(c_{\le q-1})$ divides $\wm'$. 

Consequently, we have the following equalities:
\begin{align*}
\lcm(\wm,\wn) \cdot x_{i_q,j_q} &= \lcm(\wm,\wn \cdot x_{i_q,j_q}) \\
     &= \lcm(\wm,\lcm(\wn,\wm')) \\
     &= \lcm(\lcm(\wm,\wn),\wm')\\
     & = \lcm(\wm \cdot \wu(c_{\le q-1}), \wm') = \lcm(\wm, \wm'),
\end{align*}
where the second equality follows from the inductive hypothesis
and the last  from the above remark that $\wu(c_{\le q-1})$ divides $\wm'$ and does not $\wm$.

Summing up, we conclude that $\wm \cdot \wu(c) = \lcm(\wm,\wm').$
\end{proof}

\begin{prop}\label{prop:EL-shellable}
With the above edge-labeling $\gl$,
every interval in $\gG_{\wP}^*$ is EL-shellable,
and hence shellable.
\end{prop}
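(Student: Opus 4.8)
The plan is to verify, for every interval of $\gG_{\wP}^\ast$, the two conditions of Definition~\ref{dfn:EL-labeling}; since each interval of $\gG_{\wP}^\ast$ is bounded and pure (Lemma~\ref{lem:intv}), EL-shellability of that interval, and with it shellability by the proposition following Definition~\ref{dfn:EL-labeling}, will follow at once. Two facts recorded just after the definition of $\gl$ will be used repeatedly: an unrefinable chain is recovered from its starting vertex together with its label sequence, and the label $0$ occurs only on the step into $\hat 0$, in last position. Note also that along any maximal chain of an interval $[(\wF,\wm),(\wF',\wm')]$ with $(\wF',\wm')\neq\hat 0$ one deletes, one at a time, exactly the points of $\wF\setminus\wF'$, whose first coordinates $i_1<\dots<i_q$ are distinct; hence the label sequence is a signed permutation $(\varepsilon_1 i_{\pi(1)},\dots,\varepsilon_q i_{\pi(q)})$ of $(i_1,\dots,i_q)$, with a trailing $0$ appended when $(\wF',\wm')=\hat 0$.

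First consider $[(\wF,\wm),(\wF',\wm')]$ with $(\wF',\wm')\neq\hat 0$; here no label in the interval equals $0$. Starting from an arbitrary maximal chain, I iterate the lex-decreasing operations of Lemma~\ref{lem:label_lem}: at any descent $\gl_{r-1}>\gl_r$, either $\gl_r<0$ and part~(1) transposes $\gl_{r-1},\gl_r$, or $\gl_r\ge 0$, in which case $\gl_{r-1}>0$ and either they are commutative and we transpose them or they are not and part~(2) replaces $\gl_{r-1}$ by $-\gl_{r-1}<\gl_{r-1}$; in each case one gets a strictly lex-smaller maximal chain of the same interval. As there are only finitely many label sequences, the process terminates at a descent-free — hence increasing — maximal chain, and moreover the lex-minimal maximal chain admits no such operation and is therefore increasing. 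For uniqueness, let $c$ be an increasing maximal chain of the interval. By Lemma~\ref{lem:label_unique}, $\wu(c)=\lcm(\wm,\wm')/\wm$; by Lemma~\ref{lem:adm}(1) no variable $x_{i_r,j_r}$ with $(i_r,j_r)\in\wF\setminus\wF'$ divides $\wm$, so comparing supports gives $\gl_+(c)=\{\,i_r : x_{i_r,j_r}\mid\wm'\,\}$, a set depending only on the interval. Being increasing, $c$ then has a uniquely determined label sequence — the negatives $-i_r$ with $i_r\notin\gl_+(c)$ in increasing order, followed by the positives $i_r$ with $i_r\in\gl_+(c)$ in increasing order — so $c$ is unique by the reconstruction property. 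Thus the unique increasing maximal chain coincides with the lex-minimal one, which establishes both conditions of Definition~\ref{dfn:EL-labeling} for this interval.

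It remains to treat an interval $[(\wF,\wm),\hat 0]$. In an increasing maximal chain the last label is $0$, so every earlier label is $\le 0$, hence $<0$; thus $\wm$ is unchanged along the chain, which is therefore the increasing maximal chain of $[(\wF,\wm),(\varnothing,\wm)]$ — unique by the previous case — followed by the step into $\hat 0$. This gives existence and uniqueness of the increasing maximal chain. For lex-minimality, the descent-elimination argument persists: a descent $\gl_{r-1}>\gl_r=0$ forces $\gl_{r-1}>0$ with $\gl_{r-1},\gl_r$ non-commutative (one cannot perform the step into $\hat 0$ ahead of an earlier step), so Lemma~\ref{lem:label_lem}(2) again replaces $\gl_{r-1}$ by $-\gl_{r-1}$; descents with $\gl_r\neq 0$ are handled exactly as above. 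Hence the lex-minimal maximal chain is increasing and equals the unique increasing chain. This establishes the EL-conditions for all intervals of $\gG_{\wP}^\ast$, so $\gl$ is an EL-labeling and every interval is EL-shellable, hence shellable.

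I expect the only delicate points to be checking that the operations of Lemma~\ref{lem:label_lem} genuinely cover every shape of descent — in particular the one in which the trailing $0$ is the right-hand entry — and the uniqueness step, where one needs the variables $x_{i_r,j_r}$ to lie genuinely outside $\wm$ so that $\lcm(\wm,\wm')/\wm$ pins down $\gl_+(c)$; both are already supplied by Lemmas~\ref{lem:label_lem}, \ref{lem:label_unique} and~\ref{lem:adm}, so no fresh computation is needed.
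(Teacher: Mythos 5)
Your proof is correct and follows essentially the same route as the paper: descent elimination via Lemma~\ref{lem:label_lem} to produce a lex-smaller increasing chain, uniqueness via Lemma~\ref{lem:label_unique} pinning down $\gl_+(c)$, and a separate treatment of intervals ending at $\hat 0$. Your handling of the $\hat 0$ case (reducing to $[(\wF,\wm),(\varnothing,\wm)]$) and the explicit support comparison are only cosmetic variations on the paper's argument.
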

\begin{proof}
Let $[c_0,c_q]_{\gG_{\wP}^\ast}$ be an interval in $\gG_{\wP}^*$ of length $q$.
When $q \le 1$, then it is obviously shellable.
Assume $q \ge 2$. In the sequel, we tacitly use the following fact:
for any $c$, $c' \in \cC^q([c_0,c_q]_{\gG_{\wP}^\ast})$, the equality $\gl(c) = \gl(c')$
implies $c = c'$.
We divide the arguments into the following two cases.

{\em The case $c_q = \hat 0$}.
Let $c_0 = (\wF,\wm)$ and $\wF = \bra{\ijseq{q-1}}$.
Assume there exists an increasing maximal chain $c$ in $[c_0, \hat 0]_{\gG_{\wP}^\ast}$.
Then the last entry in $\gl(c)$ is $0$.
Since $c$ is increasing, all other entries must be negative,
and hence each of them is equal to $-i_s$ for some $s$.
Therefore $\gl(c)$ must be equal to $(-i_q, -i_{q-1}, \dots ,-i_1, 0)$.
The corresponding chain $c$ is
$$
(\wF,\wm) \lessdot^\ast (\wF_{\ang{i_q}},\wm) \lessdot^\ast
((\wF_{\ang{i_q}})_{\ang{i_{q-1}}}, \wm) \lessdot^\ast \cdots \lessdot^\ast \hat 0,
$$
and this is the unique increasing maximal one in $[c_0,\hat 0]_{\gG_{\wP}^\ast}$.
Obviously $c <_{\lex} c'$ for any other maximal chain in $[c_0, \hat0]_{\gG_{\wP}^\ast}$.

{\em The case $c_q \neq \hat 0$}.
For any maximal chain $c'$, applying Lemma~\ref{lem:label_lem} iteratively
yields an increasing one $c''$ such that $c'' \le_{lex} c'$.
Hence it suffices to show the uniqueness of an increasing maximal chain $c$.
Let $c_0 = (\wF,\wm)$ and $c_q = (\wF',\wm')$.
Set $\wF \setminus \wF' = \bra{\ijseq q}$ and
$\gl_-(c) := \set{k \in \gl(c)}{k < 0}$.
The disjoint union $\gl_+(c) \sqcup \gl_-(c)$ is the set of all the entries in $\gl(c)$,
since $0 \not\in \gl(c)$.
It follows from Lemma~\ref{lem:label_unique} the set $\gl_+(c)$ does not
depend on the choice of the increasing chain $c$ and hence
neither does $\gl_-(c)$.
This implies that the set of all the entries in $\gl(c)$
is independent of the choice of $c$,
and hence so is $\gl(c)$ itself, since $c$ is increasing.
Thus we conclude that $c$ is unique.
\end{proof}

Applying Proposition \ref{prop:CW}, we obtain the following.

\begin{cor}\label{cor:EKtype_reg}
Our poset $\gG_\wP$ is CW. In particular, the resolution $\wP_\bullet$ is supported by
a regular CW complex.
\end{cor}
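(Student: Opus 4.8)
The plan is to invoke Proposition~\ref{prop:CW}, which says that a thin poset $\gG$ with $\#\gG \ge 2$, a least element $\hat 0$, and all intervals $[\hat 0,\gs]$ finite and shellable, is a CW poset. All the hypotheses are already available: $\#\gG_{\wP} \ge 2$ and the existence of $\hat 0$ are immediate from the construction of $\gG_{\wP}$; thinness is exactly Lemma~\ref{lem:intv}; and finiteness of each interval follows because the whole poset is finite. So the only nontrivial input is the shellability of each interval $[\hat 0,\gs]_{\gG_{\wP}}$.

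For that I would argue as follows. First recall that a poset is shellable if and only if its dual is, since $\gD(\gG) = \gD(\gG^\ast)$; hence it suffices to show that every interval of $\gG_{\wP}^\ast$ is shellable. But this is precisely the content of Proposition~\ref{prop:EL-shellable}: with the edge-labeling $\gl$ introduced above, every interval in $\gG_{\wP}^\ast$ is EL-shellable, and an EL-shellable bounded pure poset is shellable by the cited result of Bj\"orner. (Note that the intervals $[c_0,c_q]_{\gG_{\wP}^\ast}$ are automatically bounded and pure — purity coming from thinness and the $\ZZ^{\times}$-grading by $q$.) Reading this back through the order-reversal, every interval $[\hat 0,\gs]_{\gG_{\wP}}$ is shellable. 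Thus all the hypotheses of Proposition~\ref{prop:CW} are verified, so $\gG_{\wP}$ is a CW poset, i.e., it is isomorphic to the face poset of some regular CW complex $X$ with $(-1)$-cell $\void$.

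It then remains to see that $X$ actually \emph{supports} $\wP_\bullet$, not merely that the combinatorics of the index poset match. Here the key observation, already noted in the excerpt, is that the $\kk$-coefficients appearing in the differentials $\partial_q$ of $\wP_\bullet$ are all $\pm 1$: indeed each $\partial_q$ is a signed sum of terms $\pm x_{i_r,j_r}\cdot e(\wF_{\ang{i_r}},\wm)$ and $\pm \frac{x_{i_r,j_r}\wm}{\wm_{\ang{i_r}}}\cdot e(\wF_{\ang{i_r}},\wm_{\ang{i_r}})$. For a regular CW complex the incidence numbers $[c:c']$ of the cellular chain complex are also all $\pm 1$, and they are determined up to simultaneous sign change of each cell's orientation; since $\partial^2 = 0$ for $\wP_\bullet$ and the multidegrees match the $\gr$ built from $\wF$ and $\wm$, one can choose orientations of the cells of $X$ so that the cellular differential $\partial^{(X,\gr)}$ coincides with $\partial_\bullet$ of $\wP_\bullet$. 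This is exactly the argument used to pass from Lemma~\ref{lem:ELlabel} to Corollary~\ref{cor:EK_regular} for the ordinary Eliahou--Kervaire resolution, and it goes through verbatim here.

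I expect the genuinely substantive step to be entirely upstream — namely Proposition~\ref{prop:EL-shellable} together with Lemmas~\ref{lem:label_lem} and \ref{lem:label_unique}, where the commutation/replacement moves on the edge labels and the $\lcm$-formula $\wu(c) = \lcm(\wm,\wm')/\wm$ do the real work of pinning down the unique increasing maximal chain in each interval. Given those, the proof of the corollary itself is a short bookkeeping argument: check the three hypotheses of Proposition~\ref{prop:CW} (thinness, least element, finite shellable principal ideals), dualize to apply Proposition~\ref{prop:EL-shellable}, and then invoke the $\pm 1$-coefficient remark to identify $\wP_\bullet$ with the cellular resolution $\cF^{(X,\gr)}_\bullet$. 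The only mild subtlety worth a sentence in the write-up is making sure the shellability statement is applied to $[\hat 0,\gs]_{\gG_{\wP}}$ rather than to an interval of $\gG_{\wP}^\ast$ of the form $[c_0,c_q]$; the order-reversal identity $\gD(\gG) = \gD(\gG^\ast)$ takes care of this cleanly.
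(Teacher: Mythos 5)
Your proposal is correct and follows essentially the same route as the paper: verify the hypotheses of Proposition~\ref{prop:CW} (thinness via Lemma~\ref{lem:intv}, least element, finiteness), obtain shellability of the intervals $[\hat 0,\gs]$ by dualizing and invoking the EL-shellability of Proposition~\ref{prop:EL-shellable} together with $\gD(\gG)=\gD(\gG^\ast)$, and then use the $\pm 1$-coefficient observation to pass from the CW poset to a regular CW complex supporting $\wP_\bullet$, exactly as in the passage from Lemma~\ref{lem:ELlabel} to Corollary~\ref{cor:EK_regular}. Your extra remarks on boundedness/purity of the dual intervals and on choosing orientations are just explicit versions of steps the paper leaves implicit.
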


\section{A regular CW complex supporting a modified Eliahou-Kervaire resolution  of a Cohen-Macaulay Borel fixed ideal}

Continuously, let $I$ be a Borel fixed ideal of $S$.
By Corollary \ref{cor:EKtype_reg},
the resolution $\wP_\bullet$ of $I$ is supported by a regular CW complex as the Eliahou-Kervaire resolution is, which leads us to expect that an analogue of Theorem \ref{thm:EKball}
holds true for $\wP_\bullet$.
In this section, we will show the following:

\begin{thm}\label{thm:ball}
Assume $I$ is Cohen-Macaulay. Then the resolution $\wP_\bullet$ is
supported by a regular CW complex whose underlying space is
homeomorphic to a closed ball.
\end{thm}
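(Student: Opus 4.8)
The plan is to transplant the proof of Theorem~\ref{thm:EKball} to the polarized setting. By Corollary~\ref{cor:EKtype_reg}, the poset $\gG_\wP$ is a CW poset, so it coincides with the face poset of a regular CW complex $\wX$ with the $(-1)$-cell $\void$, and by Proposition~\ref{prop:faceposet} the order complex $\gD(\gG_\wP^0)$, where $\gG_\wP^0 := \gG_\wP \setminus \bra{\hat 0}$, is homeomorphic to the underlying space of $\wX$; thus it suffices to verify the three conditions of Proposition~\ref{prop:balllem} for $\gD(\gG_\wP^0)$. As in Section~\ref{sec:EKball} put $h = \codim S/I$; then $h = \max\set{\max(\m)}{\m \in G(I)}$ and $x_h^{l_I} \in G(I)$ for a unique $l_I > 0$, and since $\bpol(I)$ is a polarization of $I$, the ring $\wS/\bpol(I)$ is Cohen--Macaulay of codimension $h$, so $\wP_\bullet$ has length $h-1$. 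For $\m \in G(I)$ with $\max(\m) = h$ write $\bpol(\m) =: \wm$ and let $\wF^\m$ be the unique maximal $\wF$ with $(\wF, \wm)$ admissible (so $\# \wF^\m = h-1$). Arguing as in Corollary~\ref{cor:EK_const}(1), using Lemma~\ref{lem:basic_lem}(1) together with Lemma~\ref{lem:adm}(2) to climb up, one shows the maximal elements of $\gG_\wP$ are exactly the $(\wF^\m, \wm)$ with $\max(\m) = h$; hence $\gG_\wP$ is pure of rank $h$ and $\gD(\gG_\wP^0)$ is pure of dimension $h-1$.

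The technical core is the polarized analogue of Lemma~\ref{lem:basic_lem}(2), which I would phrase as: if $(\wF, \wm)$ and $(\wF', \wm')$ are admissible for $\wI$, $(i_s, j_s) \in \wF$, and either (a) $\deg_l(\m') = \deg_l(\m)$ for $l < i_s$ and $\deg_{i_s}(\m') < \deg_{i_s}(\m)$, or (b) $\deg_l(\m') = \deg_l(\m)$ for $l \le i_s$ and $i_s$ is not a first coordinate of $\wF'$, then
$$
[\hat 0, (\wF', \wm')] \cap [\hat 0, (\wF, \wm)] \subseteq [\hat 0, (\wF_{\ang{i_s}}, \wm)].
$$
The proof copies that of Lemma~\ref{lem:basic_lem}(2): for $(\wG, \wn)$ in the intersection take the unique increasing maximal chains $c$ and $c'$ in $[(\wF, \wm), (\wG, \wn)]$ and $[(\wF', \wm'), (\wG, \wn)]$ of $\gG_\wP^\ast$; by Lemma~\ref{lem:label_unique} one has $\wu(c) = \lcm(\wm, \wn)/\wm$ and $\wu(c') = \lcm(\wm', \wn)/\wm'$, and since these are squarefree, $i_s \in \gl_+(c)$ forces $x_{i_s, j_s}$ to divide $\wn$ but not $\wm$; translating this divisibility into statements about the partial sums $\sum_{l \le i_s} \deg_l(-)$, hypothesis (a) or (b) yields a contradiction exactly as in the EK case (a degree overflow against squarefreeness of $\wu(c')$, resp.\ the assumption $i_s \notin \wF'$). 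Thus $i_s \notin \gl_+(c)$; if moreover $i_s$ is not a first coordinate of $\wG$, then $-i_s \in \gl(c)$ and, by Lemma~\ref{lem:label_lem}(1), the increasing chain out of $(\wF, \wm)$ may be started with the step $(\wF, \wm) \lessdot^\ast (\wF_{\ang{i_s}}, \wm)$, whence $(\wG, \wn) \le (\wF_{\ang{i_s}}, \wm)$; the remaining case is settled by a partial-sum comparison as in Lemma~\ref{lem:basic_lem}(2). From this, following Corollary~\ref{cor:EK_const} verbatim, one obtains: (i) with $G_h(I) = \bra{\m^{(1)}, \dots, \m^{(r)}}$, $\m^{(1)} \prec \cdots \prec \m^{(r)}$ (so $\m^{(1)} = x_h^{l_I}$), and $\gG_t := [\hat 0, (\wF^{\m^{(t)}}, \bpol(\m^{(t)}))]$, the equality $\gG_\wP = \bigcup_{t=1}^r \gG_t$; (ii) $\bigl(\bigcup_{t \le j} \gG_t\bigr) \cap \gG_{j+1} = \bigcup_{s} [\hat 0, (\wF^{\m^{(j+1)}}_{\ang{s}}, \bpol(\m^{(j+1)}))]$, the union over $s \in \supp(\m^{(j+1)}) \setminus \bra{h}$ (for $\supseteq$ use Lemma~\ref{lem:basic_lem}(1) to produce a $\prec$-smaller generator $(\m^{(j+1)}/x_s) x_h^{l} \in G(I)$); and (iii) for any admissible $(\wF, \wm)$ and any set $\gs$ of first coordinates of $\wF$, the order complex of $\bigcup_{i_r \in \gs} [\hat 0, (\wF_{\ang{i_r}}, \wm)]$ is constructible, by induction on $\# \gs$ from the intersection statement and the shellability of intervals (Proposition~\ref{prop:EL-shellable}).

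Granting (i)--(iii), the three conditions of Proposition~\ref{prop:balllem} follow as in Theorem~\ref{thm:EKball}. For constructibility: by Lemma~\ref{lem:const_lem} it is enough that $\gD(\gG_\wP) = \gD(\gG_\wP^0) \ast \ang{\hat 0}$ be constructible, which follows from (i) and (iii) together with shellability of intervals. For ``at most two $(h-1)$-faces'': given an $(h-2)$-face, i.e.\ a chain $c_1 < \cdots < c_{h-1}$ in $\gG_\wP^0$, if $c_1$ is not minimal or the chain is refinable then thinness (Lemma~\ref{lem:intv}) gives exactly two extensions, while if $c_1$ is minimal and the chain unrefinable then $c_{h-1}$ has rank $h-1$, every extension is a maximal element $(\wF^{\m^{(t)}}, \bpol(\m^{(t)}))$ covering $c_{h-1}$, and (ii) shows that besides the $\prec$-largest admissible such $\m^{(t)}$ there is at most one further one, determined by $c_{h-1}$ through the relation $\m^{(t)} = g\bigl((x_h^{l_I}/x_s)\, \m^{(j+1)}\bigr)$ where $s$ is the first coordinate missing from $c_{h-1}$. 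For the last condition: with $\m^{(1)} = x_h^{l_I}$ one checks (using $x_h^{l_I} \in G(I)$, as above) that the $(h-2)$-face corresponding to the chain
$$
\bigl((\wF^{\m^{(1)}})_{\ang{h-1}}, \bpol(\m^{(1)})\bigr) \gtrdot \cdots \gtrdot \bigl(\void, \bpol(\m^{(1)})\bigr)
$$
lies in the unique facet obtained by prepending $(\wF^{\m^{(1)}}, \bpol(\m^{(1)}))$, since $\m^{(1)}$ is $\prec$-minimal in $G_h(I)$. I expect the main obstacle to be the polarized intersection lemma and item~(ii): one must track simultaneously the first coordinates of admissible sets (which control the labels $\gl(c)$ and the set $B(\wF, \wm)$) and the second coordinates (forced by admissibility but still occurring in $\wu(c)$), and verify that the operations $\fb_s$ and $g$ --- equivalently, removing a white square versus moving a black square in the stair diagrams --- interact correctly with the partial-sum inequalities that encode divisibility of polarized monomials.
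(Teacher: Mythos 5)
Your overall strategy is the paper's own: verify the three conditions of Proposition~\ref{prop:balllem} for $\gD(\gG_\wP\setminus\{\hat 0\})$ via a polarized intersection lemma, the decomposition $\gG_\wP=\bigcup_t\gG_t$ over the full admissible pairs with $\max(\m)=h$, constructibility of unions of subintervals, thinness for the at-most-two condition, and the chain through $\bpol(x_h^{l_I})$ for the last one. The genuine gap is in the $\supseteq$ direction of your item (ii) --- exactly the place where the polarized case differs from the Eliahou--Kervaire case. Writing $\wF^{(t)}:=\wF^{\m^{(t)}}$ and $\wm^{(t)}:=\bpol(\m^{(t)})$, you propose to witness $(\wF^{(j+1)}_{\ang{s}},\wm^{(j+1)})\in\bigcup_{t\le j}\gG_t$ by the generator $(\m^{(j+1)}/x_s)\,x_h^{l}$ from Lemma~\ref{lem:basic_lem}(1). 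This does not work for $\wP_\bullet$: the membership must be realized by a covering $(\wF^{(i)},\wm^{(i)})\gtrdot(\wF^{(i)}_{\ang{s}},\wm^{(i)}_{\ang{s}})$, where $\wm^{(i)}_{\ang{s}}=\bpol(g(\fb_s(\m^{(i)})))$ and $\fb_s$ exchanges $x_s$ with the \emph{next} support variable, and in addition one needs $\wF^{(i)}_{\ang{s}}=\wF^{(j+1)}_{\ang{s}}$, which forces the partial sums $\sum_{l\le k}\deg_l$ of $\m^{(i)}$ and $\m^{(j+1)}$ to agree for every $k\ne s$ with $k<h$. For $\m^{(i)}=(\m^{(j+1)}/x_s)x_h^{l}$ these partial sums drop at every $k\ge s$, and the required relations fail in general. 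Concretely, for $I=(x_1^2,x_1x_2,x_1x_3,x_2^2,x_2x_3,x_3^2)$, $\m^{(j+1)}=x_1x_3$, $s=1$: your recipe produces $x_3^2$, whose full pair is $(\{(1,1),(2,1)\},x_{3,1}x_{3,2})$, and no element below it contains $(2,2)$ in its $\wF$-set, so $(\wF^{(j+1)}_{\ang 1},\wm^{(j+1)})=(\{(2,2)\},x_{1,1}x_{3,2})$ is not in that interval; the correct witness is $x_2x_3=(\m^{(j+1)}/x_s)\cdot x_{s+1}$. In general the witness must be $(\m^{(j+1)}/x_s)\cdot x_{s+1}\cdot x_h^{l}$ for some $l\ge 0$, and the existence of such a generator is a separate lemma (the paper's Lemma~\ref{lem:const}(1)), whose proof uses Borel-fixedness (the operators $\fb$), not just stability; your proposal neither states nor proves it, so the key combinatorial input to (ii) is missing.

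The same unpolarized formula reappears in your at-most-two step, where you pin down the second maximal cover by $\m^{(t)}=g((x_h^{l_I}/x_s)\m^{(j+1)})$; the correct relation is $\wm^{(i)}=\bpol(g(x_h^{l_I}\cdot(x_{s+1}/x_s)\cdot\m^{(j+1)}))$, again with the shift $x_s\mapsto x_{s+1}$, so this step inherits the same defect (the uniqueness conclusion survives once the relation is corrected). By contrast, your polarized intersection lemma is essentially fine: hypotheses (a) and (b) both imply the condition that actually drives the argument, namely $(i_s,j_s)\in\wF\setminus\wF'$ and $x_{i_s,j_s}\nmid\wm'$, and the proof via Lemma~\ref{lem:label_unique} and the shiftability of negative labels goes through. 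Note, moreover, that your ``remaining case'' ($i_s$ a first coordinate of $\wG$) is vacuous: along any chain the $\wF$-sets only shrink, so $\wG\subseteq\wF\cap\wF'$, and since the first coordinates in $\wF$ are distinct, $(i_s,j')\in\wG$ would force $(i_s,j_s)\in\wF'$, contrary to hypothesis; no partial-sum comparison is needed there.
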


Although the idea of the proof of this assertion is quite the same as that of Theorem \ref{thm:EKball},
there are subtle differences between the proofs of the corresponding
preliminary lemmas for Theorems \ref{thm:EKball} and \ref{thm:ball}.
That's why we will give their proofs.

Henceforth we assume $S/I$ is Cohen-Macaulay of codimension $h$.
It follows from Lemma \ref{lem:stableCM} that
$$
h = \max\set{\max(\m)}{ \m \in G(I)}.
$$
and $x_h^{l_I} \in G(I)$ for a unique positive integer $l_I$.
As in Section \ref{sec:EKball},
let $\prec$ to be the lexicographical order on the monomials in $S$
with respect to $x_1 \succ x_2 \succ \cdots \succ x_n$, and
let $G_h := \bra{\m^{(1)},\dots ,\m^{(r)}}$ with
$\m^{(1)} \prec \cdots \prec \m^{(r)}$ be the set of all the elements $\m \in G(I)$ such that $\max(\m) =h$.
For each $i$, let $(\wF^{(i)},\wm^{(i)})$, where $\wm^{(i)} = \bpol(\m^{(i)})$,
be the {\em full} admissible pair,
i.e., $\# \wF^{(i)} = h - 1$.
Obviously, $\m^{(1)} = x_h^{l_I}$ and $\wF^{(1)} = \bra{(1,1),\dots ,(h-1,1)}$.
We set
$$
\gG_i := [\hat 0, (\wF^{(i)}, \wm^{(i)})].
$$

\begin{lem}\label{lem:const}
The following hold.
\begin{enumerate}
\item For any $\m \in G(I)$ and $k \in \supp(\m)$ with $k < h$, there exists an integer $l$ with
$l \ge 0$ such that
$$
\frac{\m}{x_k} \cdot x_{k+1} \cdot x_h^l \in G(I)\text{;}
$$
in particular, for any $\m \in G(I) \setminus G_h$, we can choose a positive integer
$l$ to satisfy $(\m/x_{\max(\m)}) \cdot x_h^l \in G(I)$
and hence $(\m/x_{\max(\m)}) \cdot x_h^l \in G_h$.
\item Let $(\wF, \wm), (\wF', \wm')$ be admissible paris 
with $\wF = \bra{\ijseq q}$. If there exists $(i_s, j_s) \in \wF \setminus \wF'$ such that
$x_{i_s,j_s} \nmid \wm'$, then
$$
[\hat 0, (\wF,\wm)] \cap [\hat 0, (\wF', \wm')] \subseteq [\hat 0, (\wF_{\ang{i_s}}, \wm)].
$$
In particular, if $\wm' = \wm$, then
$$
[\hat 0, (\wF,\wm)] \cap [\hat 0, (\wF', \wm')] = [\hat 0, (\wF \cap \wF', \wm)].
$$
\end{enumerate}
\end{lem}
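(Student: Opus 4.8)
The plan is to prove the two parts separately, adapting in each case the argument of the corresponding part of Lemma~\ref{lem:basic_lem}; the genuinely new feature, the \emph{b}-polarization bookkeeping in part (2), will be handled by passing to the dual poset $\gG^*_\wP$ and combining the $\lcm$-formula of Lemma~\ref{lem:label_unique} with the shifting Lemma~\ref{lem:label_lem}.

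\emph{Part (1).} Since $x_h^{l_I}\in G(I)$ by Lemma~\ref{lem:stableCM}, the monomial $(\m/x_k)\,x_{k+1}\,x_h^{l_I}$ lies in $I$; let $l\ge 0$ be least such that $\m':=(\m/x_k)\,x_{k+1}\,x_h^{l}\in I$, and put $\n:=g(\m')$. I claim $\n=\m'$, which gives the assertion. Recall (Lemma~\ref{lem:g}) that $\n\mid\m'$ and $\max(\n)\le\min(\m'/\n)$, and note that every variable occurring in $\m'$ has index at most $h$ (because $\max(\mu)\le h$ for all $\mu\in G(I)$ by Lemma~\ref{lem:stableCM}, and $k+1\le h$). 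If $x_h\mid\m'/\n$, then $\n$ divides $\m'/x_h$: when $l\ge 1$ this contradicts minimality of $l$; when $l=0$, a short case distinction according to whether $k+1=h$ (using Borel-fixedness, i.e.\ the legal move $x_{k+1}\to x_k$, when $k+1<h$) produces a proper divisor of $\m$ lying in $I$, contradicting $\m\in G(I)$. If $x_h\nmid\m'/\n$ but $x_h\mid\n$, then $\max(\n)=h$, so $\m'/\n$ would be a non-$x_h$ monomial of index $\le h$ with $\min\ge h$, forcing $\m'/\n=1$ and $\m'=\n\in G(I)$. In the only remaining case $x_h\nmid\m'$ --- which forces $l=0$, $k+1<h$ and $\deg_h(\m)=0$, so $\m'=(\m/x_k)x_{k+1}$ and $\deg(\m')=\deg(\m)$ --- if $\n\ne\m'$ then $\deg(\n)<\deg(\m)$, and distinguishing $\deg_{k+1}(\n)\le\deg_{k+1}(\m)$ from $\deg_{k+1}(\n)=\deg_{k+1}(\m)+1$ (again applying Borel-fixedness in the latter case) again yields a proper divisor of $\m$ in $I$, a contradiction. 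This proves $\m'\in G(I)$. For the ``in particular'' statement one runs the same argument with $k=\max(\m)$ and without the factor $x_{k+1}$: here $\m/x_{\max(\m)}\notin I$ forces $l\ge 1$, and the resulting $\m'$ has $\max(\m')=h$, hence lies in $G_h$ --- this case is in fact Lemma~\ref{lem:basic_lem}(1).

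\emph{Part (2).} Take $(\wG,\wn)\in[\hat 0,(\wF,\wm)]\cap[\hat 0,(\wF',\wm')]$ with $(\wG,\wn)\ne\hat 0$, and let $c$, $c'$ be the unique increasing maximal chains (Proposition~\ref{prop:EL-shellable}) of $[(\wF,\wm),(\wG,\wn)]_{\gG^*_\wP}$ and $[(\wF',\wm'),(\wG,\wn)]_{\gG^*_\wP}$ respectively. Each covering step in $\gG^*_\wP$ deletes one element of the first component, so $\wG\subseteq\wF\cap\wF'$; in particular $(i_s,j_s)\in\wF\setminus\wG$, hence $i_s$ is deleted at some step of $c$. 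By Lemma~\ref{lem:label_unique}, $\wu(c)=\lcm(\wm,\wn)/\wm$ and $\wu(c')=\lcm(\wm',\wn)/\wm'$. Since $(i_s,j_s)\notin\wF'$, the variable $x_{i_s,j_s}$ is not among the factors of $\wu(c')=\prod_{i_r\in\gl_+(c')}x_{i_r,j_r}$, so it does not divide $\lcm(\wm',\wn)/\wm'$; as $x_{i_s,j_s}\nmid\wm'$ by hypothesis, this forces $x_{i_s,j_s}\nmid\wn$. Because also $x_{i_s,j_s}\nmid\wm$ (admissibility of $(\wF,\wm)$), we obtain $x_{i_s,j_s}\nmid\lcm(\wm,\wn)/\wm=\wu(c)$, so $i_s\notin\gl_+(c)$, and therefore the label of $c$ at the step deleting $i_s$ equals $-i_s$. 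By Lemma~\ref{lem:label_lem}(1) this negative entry may be shifted to the front, yielding an unrefinable chain from $(\wF,\wm)$ through $(\wF_{\ang{i_s}},\wm)$ down to $(\wG,\wn)$; hence $(\wG,\wn)\le(\wF_{\ang{i_s}},\wm)$, which is the desired inclusion. When $\wm'=\wm$, each $(i_s,j_s)\in\wF\setminus\wF'$ satisfies $x_{i_s,j_s}\nmid\wm$, so iterating this inclusion over all such pairs gives $[\hat 0,(\wF,\wm)]\cap[\hat 0,(\wF',\wm)]\subseteq[\hat 0,(\wF\cap\wF',\wm)]$; the reverse inclusion is clear since $(\wF\cap\wF',\wm)$ lies below both $(\wF,\wm)$ and $(\wF',\wm)$.

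I expect the case analysis in part (1) to be the main obstacle: unlike in Lemma~\ref{lem:basic_lem}(1), the perturbed monomial $(\m/x_k)x_{k+1}$ need not lie in $I$, so minimality of the $x_h$-exponent alone is not decisive, and the degenerate situations (small $l$, $k+1$ near $h$, $x_h\nmid\m$) must each be ruled out by hand; this is exactly where Borel-fixedness enters, the move $x_{k+1}\to x_k$ converting the offending monomial into a proper divisor of $\m$ inside $I$. Part~(2) is essentially routine once one realizes that the purely combinatorial input ``$(i_s,j_s)\notin\wF'$'' must be converted, via Lemma~\ref{lem:label_unique}, into the divisibility statement $x_{i_s,j_s}\nmid\wn$, after which the shifting lemma does the rest.
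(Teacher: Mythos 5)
Your proposal is correct and takes essentially the same route as the paper: part (2) is the paper's own argument (the $\lcm$-formula of Lemma~\ref{lem:label_unique} to rule out $i_s\in\gl_+(c)$, then the shifting statement of Lemma~\ref{lem:label_lem}(1) to pass through $(\wF_{\ang{i_s}},\wm)$, with the $\wm'=\wm$ case by iteration), and part (1) rests on the same two ingredients the paper uses, namely minimality of the $x_h$-exponent and Borel exchange moves, only organized as a case analysis via $g$ rather than the paper's application of the operators $\fb_s(-)$ to a hypothetical proper divisor in $G(I)$. Your appeal to Lemma~\ref{lem:basic_lem}(1) for the ``in particular'' clause is a legitimate shortcut for what the paper calls an easy consequence of the first assertion.
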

\begin{proof}
(1) The second assertion is an easy consequence of the first.
We will show the first assertion.
Set $\n := (\m/x_k) \cdot x_{k+1}$. Note that $\n \cdot x_h^{l_I} \in I$.
Let $l$ be the least nonnegative integer $l$ such that $\n \cdot x_h^l \in I$.
We have only to show $\n \cdot x_h^l \in G(I)$ to complete the proof.
If $l = 0$, then $\n$ indeed must belong to $G(I)$;
otherwise there exists $\n' \in G(I)$ which strictly divides $\n$,
and applying $\fb_k(-)$ to $\n'$,
we obtain a monomial $\n'' \in I$ dividing $\m$ strictly.
This is a contradiction.

Now assume $l \ge 1$ and suppose $\n \cdot x_h^l \not\in G(I)$.
Then there exists a monomial
$\m' \in G(I)$ which strictly divides $\n \cdot x_h^l$.
However applying suitable operators $\fb_s(-)$ to $\m'$,
we can construct the monomial $\n'' \cdot x_h^{l'} \in I$ with
$\n'' \mid \m$ and $0 \le l' < l$, and hence it follows that $\m \cdot x_h^{l'} \in I$.
This contradicts the minimality of $l$.

(2) The second assertion is an immediate consequence of the first.
To prove the first assertion, we will make use of the edge labeling
$\gl: \cC^1(\gG_{\wP}^*) \to \ZZ$ defined in the previous section.
It is clear that $\hat 0 \in [\hat 0, (\wF_{\ang{i_s}},\wm)]$.
Take any $(\wF'', \wm'') \in [\hat 0, (\wF',\wm')] \cap [\hat 0, (\wF, \wm)] \setminus \bra{\hat 0}$.
It is enough to show that $(\wF_{\ang{i_s}}, \wm) <^* (\wF'',\wm'')$.
Let $c,c'$ be the unique maximal increasing chains in the intervals
$[(\wF, \wm), (\wF'', \wm'')]_{\gG_{\wP}^\ast}$ and $[(\wF',\wm'), (\wF'', \wm'')]_{\gG_{\wP}^\ast}$ in
$\gG_{\wP}^\ast$, respectively.
By Lemma~\ref{lem:label_unique},
$$
\wu(c) = \frac{\lcm(\wm, \wm'')}{\wm}, \quad \wu(c') = \frac{\lcm(\wm',\wm'')}{\wm'}.
$$
Suppose $x_{i_s, j_s} \mid \wu(c)$.
From the hypothesis $(i_s, j_s) \in \wF$ and the definition of admissible pair,
it follows that $x_{i_s,j_s} \nmid \wm$, and hence $x_{i_s,j_s} \mid \wm''$.
This in turn implies $x_{i_s, j_s} \mid \wu(c')$ since $x_{i_s, j_s} \nmid \wm'$.
However it then follows that $(i_s, j_s) \in \wF'$. This is a contradiction.
Therefore $x_{i_s,j_s}$ does not divide $\wu(c)$.
On the other hand, $(i_s,j_s) \in \wF \setminus \wF'$ also implies
$(i_s,j_s) \in \wF \setminus \wF''$. Hence it follows that $-i_s \in \gl(c)$.
Since every negative entry is shiftable to any position in the left
by Lemma~\ref{lem:label_lem},
we have the following chain:
$$
(\wF,\wm) \lessdot^\ast (\wF_{\ang{i_s}},\wm) \lessdot^\ast \cdots \lessdot^\ast (\wF'',\wm'').
$$
Therefore $(\wF_{\ang{i_s}}, \wm) <^\ast (\wF'', \wm'')$.
\end{proof}

\begin{cor}\label{cor:const}
The following hold.

\begin{enumerate}
\item $\gG_{\wP} = \bigcup_{i=1}^r \gG_i$; hence the poset $\gG_{\wP}$ is {\em pure}.
\item The maximal elements of $\pr{\bigcup_{i=1}^j \gG_i} \cap \gG_{j+1}$
are
$$
\set{(\wF^{(j+1)}_{\ang{s}},\wm^{(j+1)})}{s \in \supp(\m^{(j+1)}) \setminus \bra{h}}.
\label{enum:max_elems}
$$
\item For any admissible pair $(\wF, \wm)$ with $\wF := \bra{\ijseq q}$
and for any subset $\gs \subseteq \bra{i_1, \dots ,i_q}$,
the order complex of the poset
$$
\bigcup_{i_r \in \gs} [\hat 0, (\wF_{\ang{i_r}}, \wm)]
$$
is constructible. \label{enum:union_const}
\end{enumerate}
\end{cor}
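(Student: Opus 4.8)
The plan is to imitate, nearly line for line, the proof of Corollary~\ref{cor:EK_const}, using Lemma~\ref{lem:const} in place of Lemma~\ref{lem:basic_lem} and Proposition~\ref{prop:EL-shellable} (shellability of the intervals of $\gG_{\wP}^\ast$) in place of its Eliahou--Kervaire analogue Lemma~\ref{lem:ELlabel}. For (1) it suffices to show $\gG_{\wP}\subseteq\bigcup_{i=1}^r\gG_i$. If $(\wF,\wm)\in\gG_{\wP}$ has $\m\in G_h$, then $\wm=\wm^{(i)}$ for some $i$, and condition (b) of Definition~\ref{dfn:adm} forces $\wF\subseteq\wF^{(i)}$; deleting the white squares of $\wF^{(i)}\setminus\wF$ one at a time shows $(\wF^{(i)},\wm^{(i)})\ge(\wF,\wm)$, so $(\wF,\wm)\in\gG_i$. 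If instead $\max(\m)<h$, the ``in particular'' clause of Lemma~\ref{lem:const}(1) produces $\m_0:=(\m/x_{\max(\m)})\cdot x_h^l\in G_h$, and a routine computation with $g$ and $\fb$ shows $g(\fb_{\max(\m)}(\m_0))=\m$; hence, taking the full admissible pair over $\m_0$, the pair $(\wF,\wm)$ lies below the pair obtained from it by the operation in row $\max(\m)$, and therefore in some $\gG_i$. Purity is then immediate, since the maximal elements of $\gG_{\wP}$ are exactly the full pairs $(\wF^{(i)},\wm^{(i)})$, all with $\#\wF^{(i)}=h-1$, so every maximal chain has length $h$.

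For (2) I would work with the order ideal $\pr{\bigcup_{i=1}^j\gG_i}\cap\gG_{j+1}$ directly. To see that each $(\wF^{(j+1)}_{\ang{s}},\wm^{(j+1)})$ with $s\in\supp(\m^{(j+1)})\setminus\bra{h}$ lies in this intersection, Lemma~\ref{lem:const}(1) applied with $k=s$ furnishes $\m_0:=(\m^{(j+1)}/x_s)\cdot x_{s+1}\cdot x_h^l\in G_h$; since the exponent of $x_s$ strictly drops we have $\m_0\prec\m^{(j+1)}$, so $\m_0=\m^{(p)}$ with $p\le j$, and the check $g(\fb_s(\m^{(p)}))=\m^{(j+1)}$ identifies $(\wF^{(j+1)}_{\ang{s}},\wm^{(j+1)})$ as a pair below the full pair over $\m^{(p)}$, hence in $\gG_p\cap\gG_{j+1}$. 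For the reverse containment, given $i\le j$ the inequality $\m^{(i)}\prec\m^{(j+1)}$ yields a least index $s$ with $\deg_s(\m^{(i)})<\deg_s(\m^{(j+1)})$, and $s\ne h$ since otherwise $\m^{(i)}\mid\m^{(j+1)}$; comparing the diagrams of $\bpol(\m^{(i)})$ and $\bpol(\m^{(j+1)})$ shows that the white square $(s,j_s)$ of $\wF^{(j+1)}$ in row $s$ is not a white square of $\wF^{(i)}$ and satisfies $x_{s,j_s}\nmid\bpol(\m^{(i)})$, so Lemma~\ref{lem:const}(2) gives $\gG_i\cap\gG_{j+1}\subseteq[\hat 0,(\wF^{(j+1)}_{\ang{s}},\wm^{(j+1)})]$. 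Since distinct white-square deletions produce incomparable pairs, the listed pairs are precisely the maximal elements of the intersection.

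Assertion (3) then repeats the proof of Corollary~\ref{cor:EK_const}(3) verbatim, by induction on $\#\gs$. For $\#\gs=1$ the interval $[\hat 0,(\wF_{\ang{i_r}},\wm)]$ is shellable by Proposition~\ref{prop:EL-shellable} (applied to $\gG_{\wP}^\ast$ and transported by $\gD(\gG)=\gD(\gG^\ast)$), hence constructible. For $\#\gs\ge 2$, write $\gs=\bra{i_{m_1},\dots,i_{m_t}}$ and let $\gD'$, $\gD''$ be the order complexes of $\bigcup_{s=1}^{t-1}[\hat 0,(\wF_{\ang{i_{m_s}}},\wm)]$ and $[\hat 0,(\wF_{\ang{i_{m_t}}},\wm)]$; both are constructible of the same dimension by induction, and the second part of Lemma~\ref{lem:const}(2) yields
$$
\gD'\cap\gD''=\gD\pr{\,\bigcup_{s=1}^{t-1}\bigl[\hat 0,\bigl((\wF_{\ang{i_{m_t}}})_{\ang{i_{m_s}}},\,\wm\bigr)\bigr]\,},
$$
which is constructible of dimension one less by induction; hence $\gD'\cup\gD''$ is constructible, completing the induction.

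The step I expect to be the main obstacle is the diagram-of-squares bookkeeping underlying parts (1) and (2): verifying $g(\fb_{\max(\m)}(\m_0))=\m$, respectively $g(\fb_s(\m^{(p)}))=\m^{(j+1)}$, and pinpointing the white square $(s,j_s)$ of $\wF^{(j+1)}$ relative to the black squares of $\bpol(\m^{(i)})$. This is exactly where the argument departs from the Eliahou--Kervaire case; the discrepancy arises because Lemma~\ref{lem:const}(1) carries the extra factor $x_{k+1}$ coming from the operator $\fb_k$, in contrast with the cleaner form of Lemma~\ref{lem:basic_lem}(1).
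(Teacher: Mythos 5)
Your parts (2) and (3) are correct and essentially reproduce the paper's argument: (2) uses Lemma~\ref{lem:const}(1) to realize each $(\wF^{(j+1)}_{\ang{s}},\wm^{(j+1)})$ inside some $\gG_p$ with $p\le j$, and Lemma~\ref{lem:const}(2) with the index $s$ coming from the lexicographic comparison $\m^{(i)}\prec\m^{(j+1)}$ for the reverse containment; (3) repeats the induction of Corollary~\ref{cor:EK_const}(3) with Proposition~\ref{prop:EL-shellable} supplying the base case. The purity argument in (1) is also fine.

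However, in part (1), the case $\max(\m)<h$ contains a step that fails as written. You assert that $(\wF,\wm)$ lies below the pair obtained from the full pair $(\wF^{(i)},\wm^{(i)})$ over $\m_0$ by the monomial-changing operation in row $\max(\m)$, i.e.\ below $(\wF^{(i)}_{\ang{\max(\m)}},\wm^{(i)}_{\ang{\max(\m)}})$. That pair is in general \emph{not admissible}, hence not an element of $\gG_{\wP}$: after the operation the monomial becomes $\wm=\bpol(\m)$ with $\max(\m)<h$, while $\wF^{(i)}_{\ang{\max(\m)}}$ still contains white squares in rows $\max(\m)+1,\dots,h-1$, so condition (a) of Definition~\ref{dfn:adm} is violated whenever $\max(\m)<h-1$; equivalently, $\max(\m)\notin B(\wF^{(i)},\wm^{(i)})$ in that case, so the claimed comparison has no meaning in the poset and the descent chain you describe does not exist. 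The paper performs the operations in the opposite order: starting from $(\wF,\wm)$, it observes via Lemma~\ref{lem:adm} that $(\wF,\wm^{(i)})$ is admissible, sets $\wG:=\wF\cup\bra{(i_k,j_k)}$ with $i_k=\max(\m)$ and $j_k$ the column prescribed by condition (b) for $\wm^{(i)}$, notes that $(\wG,\wm^{(i)})$ is admissible (all its rows are $<h=\max(\m^{(i)})$) and lies below $(\wF^{(i)},\wm^{(i)})$ by white-square deletions only, and then applies the monomial-changing operation last, giving $(\wF,\wm)=(\wG_{\ang{i_k}},\wm^{(i)}_{\ang{i_k}})\lessdot(\wG,\wm^{(i)})\le(\wF^{(i)},\wm^{(i)})$. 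Your ingredients are the right ones ($\m_0=(\m/x_{\max(\m)})x_h^l\in G_h$ from Lemma~\ref{lem:const}(1) and the computation $g(\fb_{\max(\m)}(\m_0))=\m$), but the white squares in rows strictly between $\max(\m)$ and $h$ must be deleted \emph{before} the monomial-changing step, not after; as stated, your chain leaves the poset.
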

\begin{proof}
The assertion \eqref{enum:union_const} can be shown by the same argument as
\eqref{enum:EK_const} of Corollary \ref{cor:EK_const}.
We will show the assertions (1) and (2).

(1) Only the inclusion $\gG_{\wP} \subseteq \bigcup_{i=1}^r \gG_i$
is not trivial, and it suffices to show that $\gG_{\wP} \setminus \hat 0$
is contained in $\bigcup_{i=1}^r \gG_i$.
Let $(\wF,\wm) \in \gG_\wP \setminus \hat 0$. If $\max(\m) = h$, then $\wm \in G_h$,
and hence $(\wF, \wm) \in \bigcup_{i=1}^j \gG_i$.
Assume $\max(\m) < h$. Then by Lemma~\ref{lem:const}, there exists $\m^{(i)} \in G_h$ such that
$\m^{(i)}_{\ang{i_k}} = \m$, where $i_k = \max(\m)$.
By Lemma~\ref{lem:adm}, the pair $(\wF, \wm^{(i)})$ is admissible.
Let $j_k$ be the integer such that $(\bra{(i_k,j_k)},\wm^{(i)})$ is admissible.
Set $\wG = \wF \cup \bra{(i_k,j_k)}$.
Obviously, $(i_k,j_k) \not\in \wF$ and $(\wG, \wm^{(i)})$ is admissible.
Since $(\wG, \wm^{(i)}) \in \bigcup_{i=1}^r \gG_i$
and $(\wF,\wm) = (\wG_{\ang{i_k}},\wm^{(i)}_{\ang{i_k}})$,
it follows that $(\wF, \wm) \in \bigcup_{i=1}^r \gG_i$.

(2) First, we will show that each $(\wF^{(j+1)}_{\ang{s}},\wm^{(j+1)})$ with $s \in \supp(\m^{(j+1)}) \setminus \bra{h}$
is indeed in $\bigcup_{i=1}^j \gG_i \cap \gG_{j+1}$.
We have only to verify that $(\wF^{(j+1)}_{\ang{s}},\wm^{(j+1)}) \in \gG_i$
for some $i$.
According to Lemma~\ref{lem:const}, there exists a nonnegative integer $l$ such that
$(\m^{(j+1)}/x_s) \cdot x_{s+1} \cdot x_h^l \in G(I)$.
This generator is in $G_h$, since $\m^{(j+1)} \in G_h$ and $s \neq h$,
and thus it equals $\m^{(i)}$ for some $i$ with $i \le j$.
Moreover $\m^{(i)}_{\ang{s}} = \m^{(j+1)}$ and $\wF^{(i)}_{\ang{s}} = \wF^{(j+1)}_{\ang{s}}$.
Therefore $(\wF^{(j+1)}_{\ang{s}}, \wm^{(j+1)}) \in \gG_i$.

To show the inverse inclusion, it is enough to verify that
$$
\gG_i \cap \gG_{j+1} \subseteq [\hat 0, (\wF^{(j+1)}_{\ang{s}}, \wm^{(j+1)})]
$$
for any $i$ with $1 \le i \le j$ and for some $s \in \supp(\wm^{(j+1)}) \setminus \bra{h}$.
Let $\wF^{(j+1)} = \bra{(1,j_1), \dots ,(h-1,j_{h-1})}$.
Since $\wm^{(i)} \prec \wm^{(j+1)}$, there exists an integer $s_0$ such that
$\deg_k(\m^{(i)}) = \deg_k(\m^{(j+1)})$ for $k < s_0$ and
$\deg_{s_0}(\m^{(i)}) < \deg_{s_0}(\m^{(j+1)})$.
Obviously it follows that $s_0 \in \supp\pr{\m^{(j+1)}} \setminus \bra{h}$,
$(s_0, j_{s_0}) \not\in \wF^{(i)}$, and that $x_{s_0,j_{s_0}} \nmid \wm^{(i)}$.
Applying Lemma~\ref{lem:const}, we conclude that
$\gG_i \cap \gG_{j+1} \subseteq [\hat 0, (\wF^{(j+1)}_{\ang{s_0}}, \wm^{(j+1)})]$.
\end{proof}

\begingroup
\renewcommand\proofname{Proof of Theorem~\ref{thm:ball}}
\begin{proof}
As in the proof of Theorem~\ref{thm:EKball},
it suffices to show that $\gD(\gG_{\wP} \setminus \bra{\hat 0})$
satisfies the conditions \eqref{enum:const}, \eqref{enum:atmost2}, and \eqref{enum:only1}
in Proposition \ref{prop:balllem}.

The proof that $\gD(\gG_{\wP} \setminus \bra{\hat 0})$ satisfies \eqref{enum:const}
is the same as that of the corresponding assertion of Theorem \ref{thm:EKball}
 (use Lemma \ref{lem:const_lem} and Corollary \ref{cor:const}).
Thus we shall verify that $\gD(\gG_{\wP} \setminus \bra{\hat 0})$ satisfies
the conditions \eqref{enum:atmost2} and \eqref{enum:only1}.

As for \eqref{enum:atmost2},
since $\gG_{\wP}$ is thin, by the same argument as in the proof of
Theorem \ref{thm:EKball}, it suffices to show that
each unrefinable chain $c: c_1 \lessdot \dots \lessdot c_{h-1}$ with $c_1$ minimal
in $\gG_{\wP} \setminus \bra{\hat 0}$ is contained in at most 2 maximal chains.

Note that every element $c_h$ with $c_{h-1} \lessdot c_h$
is the maximal one in $\gG_{\wP}$ and of the form $(F^{(i)},\wm^{(i)})$.
Let $\wm^{(j+1)}$ be a maximal element with respect to $\prec$
such that $c_{h-1} \lessdot (\wF^{(j+1)},\wm^{(j+1)})$.
If there exists another maximal element $(\wF^{(i)}, \wm^{(i)})$ in $\gG_{\wP}$
such that $c_{h-1} \lessdot (\wF^{(i)},\wm^{(i)})$, by Corollary \ref{cor:const}, there exists
$s \in \supp(\m^{(j+1)}) \setminus \bra{h}$ such that
$\wF^{(i)}_{\ang{s}} = \wF^{(j+1)}_{\ang{s}}$ and $\wm^{(i)}_{\ang{s}} = \wm^{(j+1)}$.
By the choice of $(\wF^{(j+1)},\wm^{(j+1)})$, it follows that $i \le j$.
These implies that $(\m^{(i)} \cdot (x_s/x_{s+1})) = \m^{(j+1)} \cdot x_h^k$
for suitable non-negative integer $k$,
and hence $\wm^{(i)} = \bpol(g(x_h^{l_I} \cdot (x_{s+1}/x_s) \cdot \m^{(j+1)} ))$.
Moreover if we set $c_{h-1} = (\wF, \wm)$, then
$$
\bra{s} = \bra{1,\dots, h-1} \setminus \set{k}{(k,l) \in \wF \ \text{for some $l$}}.
$$
Thus $(\wF^{(i)},\wm^{(i)})$ is uniquely determined by $(\wF^{(j+1)},\wm^{(j+1)})$
and $c_{h-1}$.
Hence $c$ is contained in at most $2$ maximal chains.

What remains to be proved is that $\gD(\gG_{\wP} \setminus \bra{\hat 0})$
satisfies \eqref{enum:only1}.
This is clear: indeed, the face corresponding to the chain
$$
(\wF^{(1)}_{\ang{h-1}}, \wm^{(1)}) \gtrdot ((\wF^{(1)}_{\ang{h-1}})_{\ang{h-2}}, \wm^{(1)}) \gtrdot \cdots \gtrdot (\varnothing ,\wm^{(1)})
$$
is contained only the facet corresponding to
the one
$$
(\wF^{(1)},\wm^{(1)}) \gtrdot (\wF^{(1)}_{\ang{h-1}}, \wm^{(1)}) \gtrdot ((\wF^{(1)}_{\ang{h-1}})_{\ang{h-2}}, \wm^{(1)}) \gtrdot \cdots \gtrdot (\varnothing ,\wm^{(1)}).
$$
\end{proof}
\endgroup

\begin{exmp}\label{ex:modEK}
Let $I$ be the same monomial ideal as in Example~\ref{ex:EK}.
The following CW complex supports the modified Eliahou-Kervaire resolution $\wP_\bullet$
of $\wI = \bpol(I)$.
It is indeed regular and homeomorphic to a $2$-dimensional closed ball.

\begin{figure}[htbp]
\begin{center}
\includegraphics[height=5cm, width=5cm]{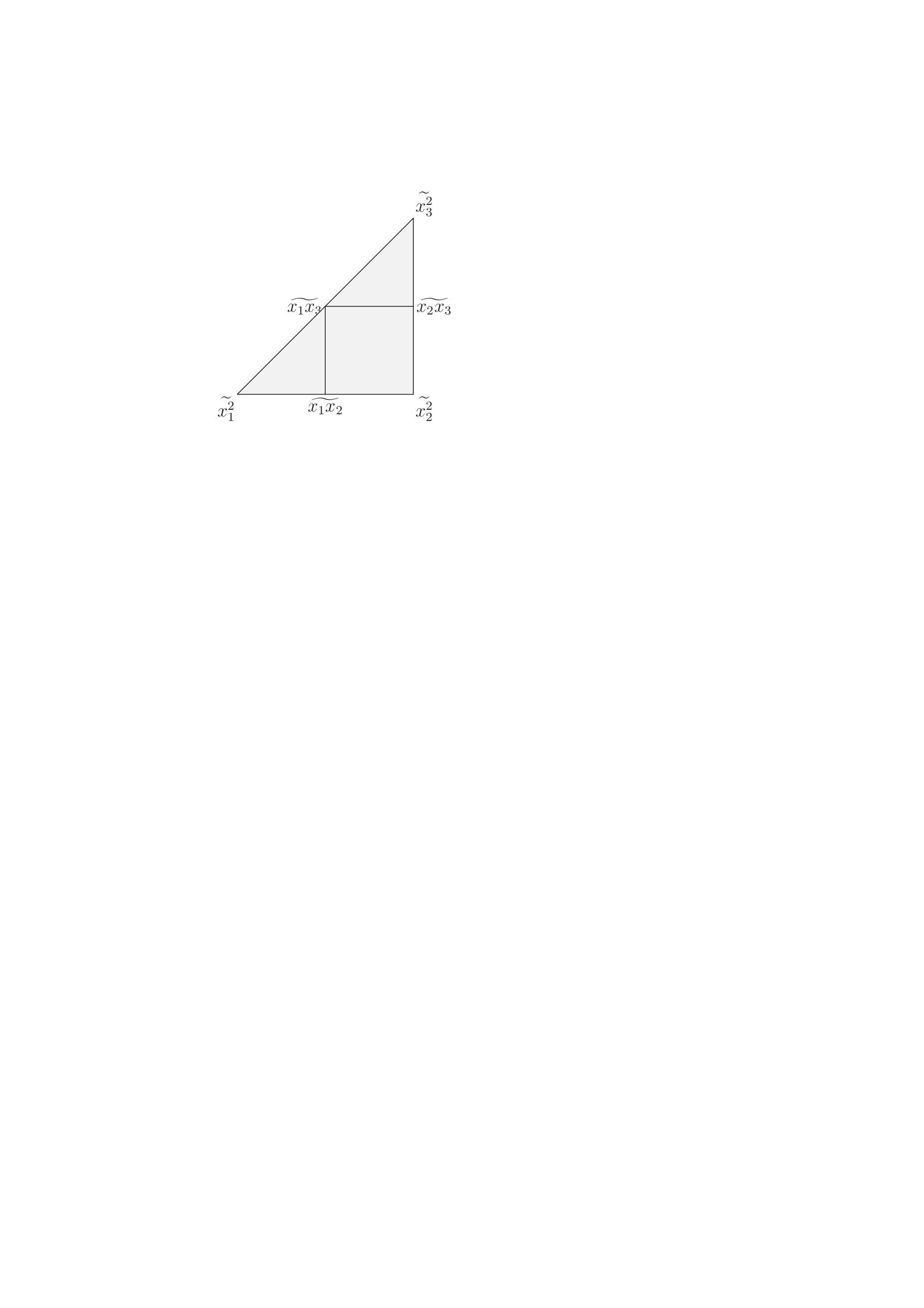}
\end{center}
\caption{}\label{fig:modEK}
\end{figure}
\end{exmp}\section{Final remarks}

Let $I$ be a Borel fixed ideal $I$ (not necessarily Cohen-Macaulay),
and $d$ a positive integer with $d \ge \max\set{\deg(\m)}{\m \in G(I)}$.
Set $N := n + d - 1$ and $T :=  \kk[x_1,\dots ,x_N]$.
For a monomial $\m = \prod_{i=1}^{\deg(\m)} x_{\ga_i} \in G(I)$ with $1 \le \ga_1 \le \dots \le \ga_{\deg(\m)}$,
define the monomial $\m^\gs$ in $T$
as follows:
$$
\m^\gs := x_{\ga_1}x_{\ga_2 + 1}\cdots x_{\ga_i + (i-1)} \cdots x_{\ga_d + \deg(\m) -1}.
$$
Let $I^\gs$ be the monomial ideal of $T$ generated by $\set{\m^\gs}{\m \in G(I)}$.
It is well-known that the monomial ideal $I^\gs$ is then squarefree strongly stable
and any squarefree strongly stable monomial ideal is of the form $I^\gs$ for some
Borel fixed ideal $I$.
As is stated in Section~\ref{sec:intro}, the subset
$\Theta' := \set{x_{i,j} - x_{i+1,j-1}}{1 \le i < n,\ 1 < j \le d}$ forms a regular sequence on $\wS$ and $(\Theta')$ is the kernel of the surjective ring homomorphism
$\wS \to T$ sending $x_{i,j}$ to $x_{i+j-1}$; in particular $\wS/(\Theta') \cong T$.
Moreover through this ring isomorphism,
it follows that $\wS/(\Theta') \otimes_{\wS} \wI \cong I^\gs$ and
$\wS/(\Theta') \otimes_{\wS} \wP_\bullet$ is a minimal $\ZZ^N$-graded
free resolution of $I^\gs$ \cite{Y}.
Obviously $\wS/(\Theta') \otimes_{\wS} \wP_\bullet$ is also cellular and supported by
the same CW complex as $\wP_\bullet$.
Therefore the following is an immediate consequence of Corollary~\ref{cor:EKtype_reg}
and Theorem~\ref{thm:ball}

\begin{cor}\label{cor:Isigma}
With the above notation, the minimal free resolution $\wS/(\Theta') \otimes_{\wS} \wP_\bullet$ of $I^\gs$ is supported by a regular CW complex,
and when $I$ is Cohen-Macaulay, the regular CW complex can be chosen
so that its underlying space is homeomorphic to a closed ball.
\end{cor}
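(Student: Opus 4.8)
The plan is to obtain Corollary~\ref{cor:Isigma} as a direct consequence of Corollary~\ref{cor:EKtype_reg} and Theorem~\ref{thm:ball}, the point being that passing from $\bpol(I)$ to $I^\gs$ through the regular sequence $\Theta'$ leaves the supporting CW complex completely unchanged: only the multidegrees attached to the free generators get modified.

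Concretely, I would first record that $\wP_\bullet$ has the form $\cF^{(X,\gr)}_\bullet$ for a CW complex $X$ and an order-preserving map $\gr\colon X^{(*)}\to\ZZ^{n\times d}_{\ge0}$, where the incidence coefficients $[c:c']$ depend only on $X$. The surjection $\wS\to T$, $x_{i,j}\mapsto x_{i+j-1}$, with kernel $(\Theta')$, induces a monoid homomorphism $\ZZ^{n\times d}_{\ge0}\to\ZZ^N_{\ge0}$, and composing it with $\gr$ yields a map $\gr'\colon X^{(*)}\to\ZZ^N_{\ge0}$. Since $\Theta'$ is a regular sequence on $\wS$ and on $\wS/\wI$ (\cite{Y}), the complex $\wS/(\Theta')\otimes_{\wS}\wP_\bullet$ is again a minimal free resolution, namely of $\wS/(\Theta')\otimes_{\wS}\wI\cong I^\gs$, and by construction it equals $\cF^{(X,\gr')}_\bullet$; thus it is supported by the same CW complex $X$ as $\wP_\bullet$.

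The two assertions now follow. Regularity of a CW complex is a property of $X$ and its cell structure alone, independent of the labelling, so choosing for $X$ the regular CW complex provided by Corollary~\ref{cor:EKtype_reg} shows that $\wS/(\Theta')\otimes_{\wS}\wP_\bullet$ is supported by a regular CW complex. For the Cohen-Macaulay case, since $\bpol(I)$ is a polarization of $I$ and $\Theta'$ is a regular sequence on $\wS/\bpol(I)$, the ring $T/I^\gs$ is Cohen-Macaulay precisely when $S/I$ is; Theorem~\ref{thm:ball} then furnishes a regular $X$ whose underlying space is a closed ball, and the same $X$ supports the resolution of $I^\gs$ by the previous paragraph. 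The argument involves no genuine obstacle --- it is a formal reduction --- and the only point deserving care is the verification that the topological data of $X$, as opposed to the multigrading of its free generators, is untouched by $-\otimes_{\wS}\wS/(\Theta')$, which is exactly what the cited results \cite{Y} and \cite{OY}, together with the definition of a cellular resolution recalled in Section~\ref{sec:intro}, provide.
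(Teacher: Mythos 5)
Your proposal is correct and follows essentially the same route as the paper: since $\Theta'$ is a regular sequence with $\wS/(\Theta')\cong T$ and $\wS/(\Theta')\otimes_{\wS}\wI\cong I^\gs$, the complex $\wS/(\Theta')\otimes_{\wS}\wP_\bullet$ is a minimal free resolution of $I^\gs$ supported by the same CW complex as $\wP_\bullet$, so the corollary follows from Corollary~\ref{cor:EKtype_reg} and Theorem~\ref{thm:ball}. Your added remark on the Cohen--Macaulayness of $T/I^\gs$ is harmless but unnecessary, since the hypothesis of Theorem~\ref{thm:ball} concerns $I$ itself.
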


\begin{rem}\label{rem:last_rem}
(1) If a Borel-fixed ideal $I$ is not Cohen-Macaulay, the regular CW complex
supporting $\wP_\bullet$ may not be homeomorphic to a closed ball.
Indeed, let $I = (x_1^2, x_1x_2, x_1x_3, x_2^3, x_2^2x_3)$.
An easy computation then shows that the CW complex can be chosen to be
the following simplicial complex with two triangles glued along a vertex (Figure \ref{fig:tri-tri}). Clearly it is not homeomorphic to a closed ball.

(2) On the other hand, there is a Borel-fixed ideal $I$
which is not Cohen-Macaulay but the regular CW complex supporting $\wP_\bullet$ is a closed ball.
For example, let $I = (x_1^2, x_1x_2, x_1x_3, x_2^2, x_2x_3)$.
The CW complex is in turn given by gluing
a square with a triangle along an edge (Figure \ref{fig:tri-sq}),
and is homeomorphic to a closed ball.
\begin{figure}[htbp]
\begin{minipage}{.48\textwidth}
\begin{center}
\includegraphics[height=4.4cm, width=5cm]{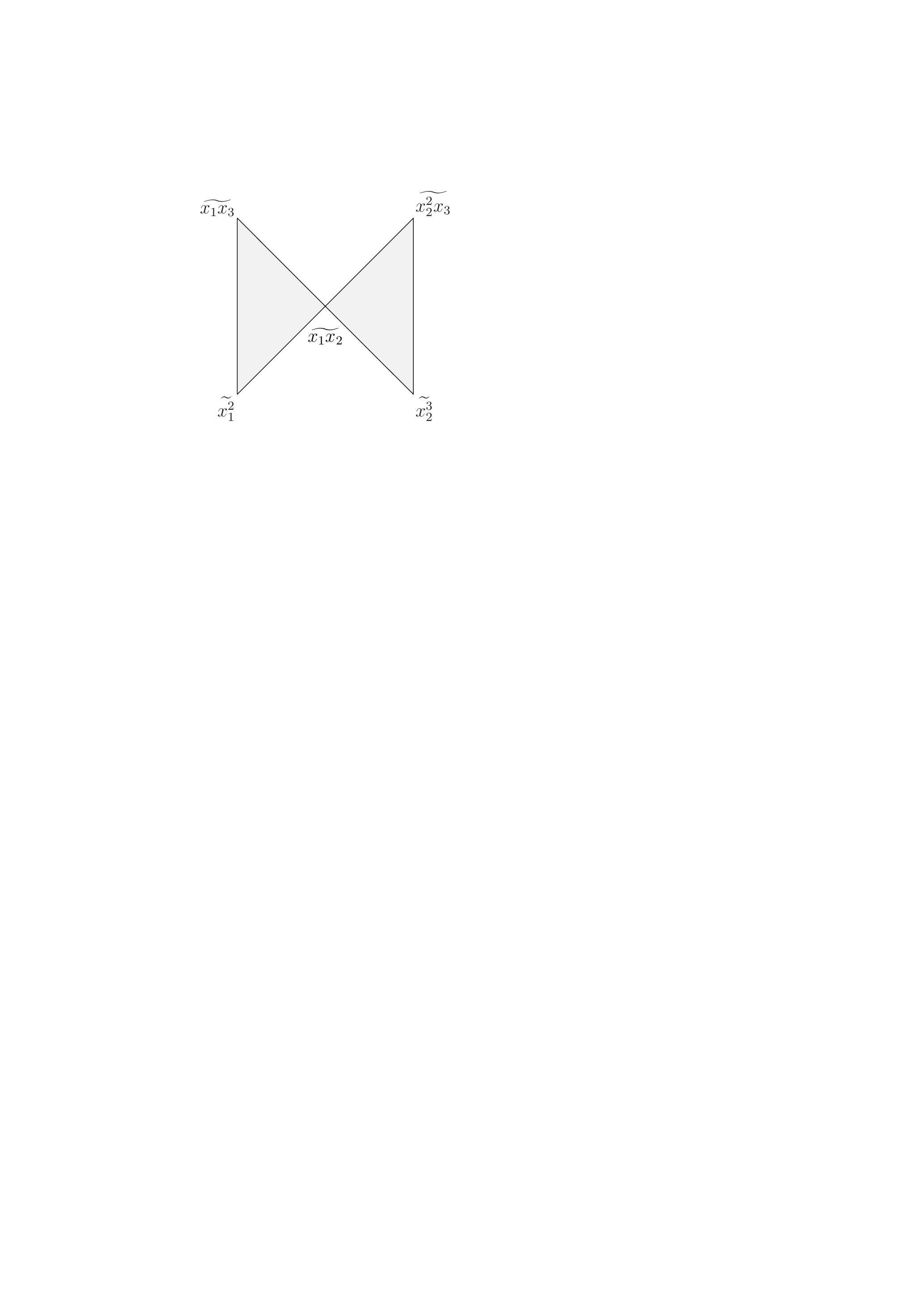}
\end{center}
\caption{}\label{fig:tri-tri}
\end{minipage}
\begin{minipage}{.48\textwidth}
\begin{center}
\includegraphics[height=4.4cm, width=6cm]{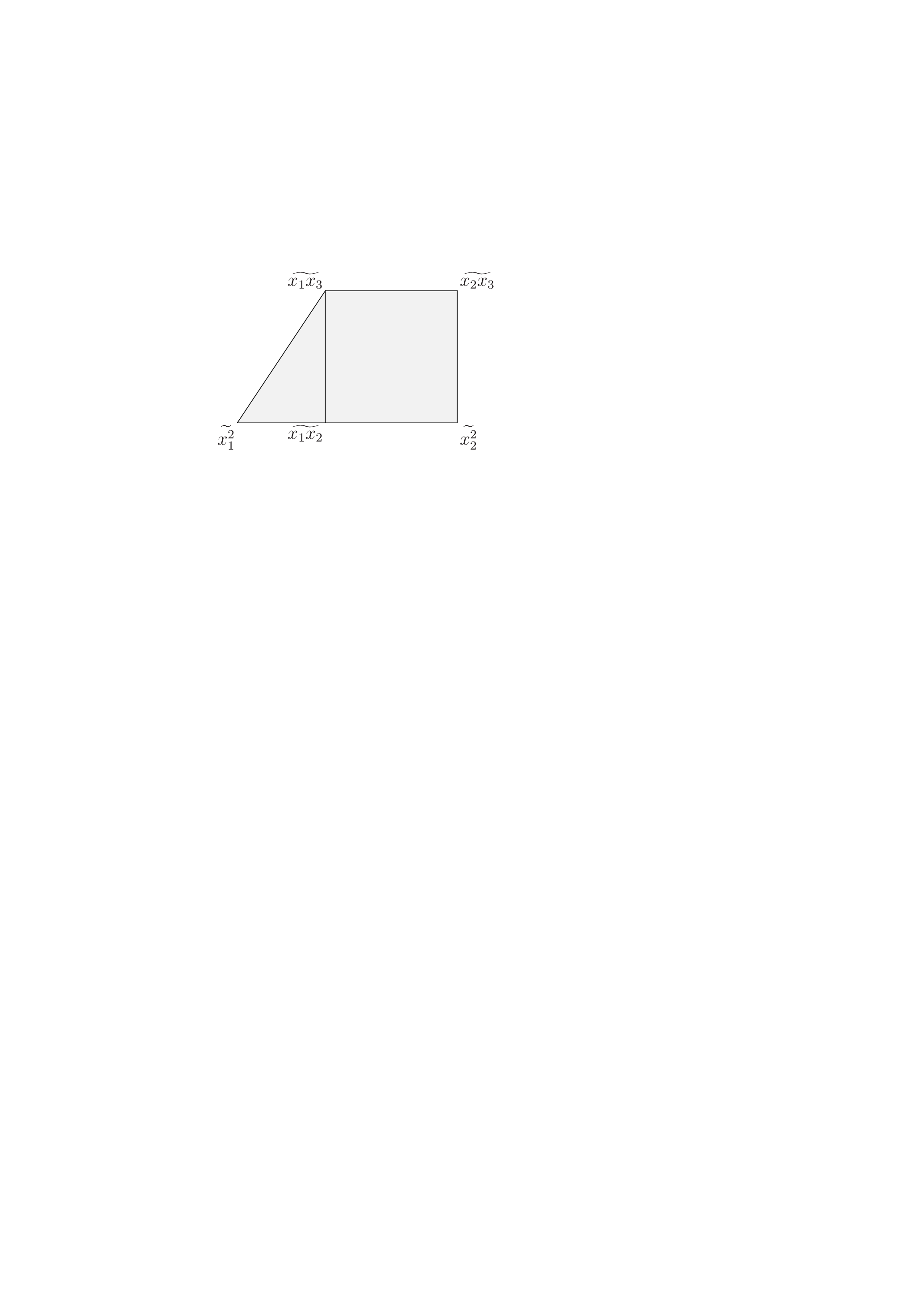}
\end{center}
\caption{}\label{fig:tri-sq}
\end{minipage}
\end{figure}

(3) Recall that, for a Borel fixed ideal $I$,
$\wI$ is a polarization of $I$, and $Q_\bullet := \wS/(\Theta) \otimes_{\wS} \wP_\bullet$
is a minimal free resolution of $I$ with the same supporting CW complex as $\wP_\bullet$
(see Section~\ref{sec:intro} for the definition of $\Theta$).
In the case of (2), the regular CW complex supporting
the Eliahou-Kervaire resolution of $I$ is different from that supporting
$Q_\bullet$.
Indeed, the complex supporting the Eliahou-Kervaire resolution is the simplicial complex
as in Figure~\ref{fig:tri-tri2}, which is not homeomorphic to
the CW complex in Figure~\ref{fig:tri-sq}.
\begin{figure}[htbp]
\begin{center}
\includegraphics[height=4.4cm, width=5cm]{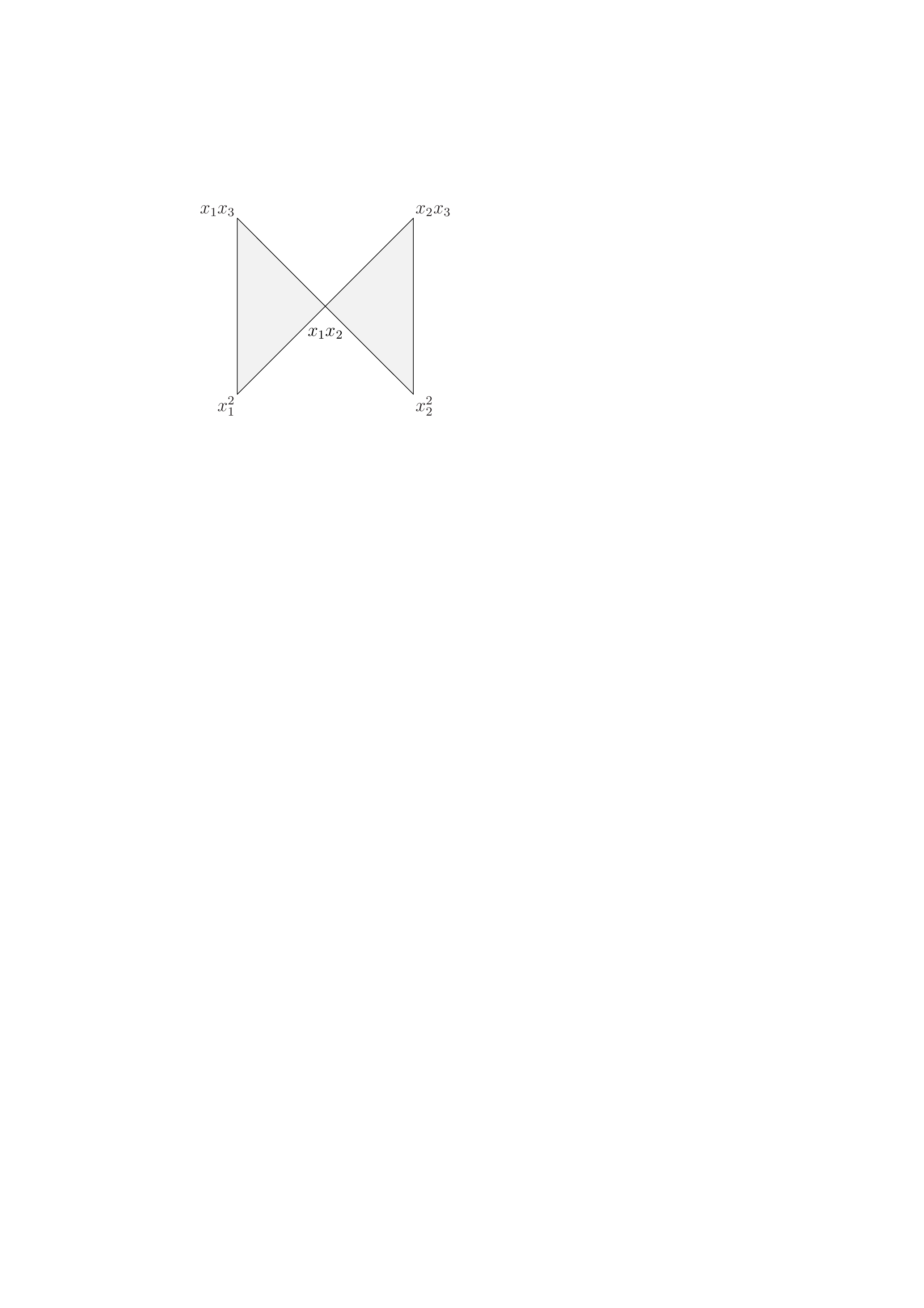}
\end{center}
\caption{}\label{fig:tri-tri2}
\end{figure}

As for a Cohen-Macaulay case, let $I = (x_1^2,x_1x_2,x_1x_3,x_2^2,x_2x_3,x_3^2)$.
The CW complexes supporting its Eliahou-Kervaire resolution and $Q_\bullet$
are described in Examples \ref{ex:EK} and \ref{ex:modEK}, respectively.
These are the same as complexes, while they differ in the labelings
even if the operation `` $\widetilde{\ }$ '' is ignored.

In the case where $I = (x_1^2,x_1x_2,x_1x_3,x_2^4,x_2^3x_3,x_2^2x_3^2,x_2x_3^3,x_3^4)$,
the corresponding CW complexes are different even as complexes.
We leave it the reader to verify this fact.
\end{rem}

As far as the authors have calculated, each regular CW complex
supporting the modified Eliahou-Kervaire resolution of a Borel fixed ideal
can be chosen to be {\em polytopal}, i.e., a regular CW complex
whose cells are polytopes.
This leads us to pose the following question.

\begin{ques}
Is every modified Eliahou-Kervaire resolution $\wP_\bullet$ of $\bpol(I)$ of a Borel fixed ideal $I$
supported by a polytopal complex?
\end{ques}

According to \cite{NR} (see also \cite{OY}), this assertion holds true
if $I$ is generated by monomials of the same degree.

\section*{Acknowledgement}

We are grateful to Professor Satoshi Murai for his valuable comments.

\begin{thebibliography}{88}
\bibitem{AHH2} A. Aramova, J. Herzog and T. Hibi, Shifting operations and graded Betti numbers, 
J. Alg. Combin. {\bf 12} (2000) 207--222. 
%
%
\bibitem{BaWe} E. Batzies and V. Welker,
{\em Discrete Morse theory for cellular resolutions},
J. Reine Angew. Math. {\bf 543} (2002), 147--168.
%
\bibitem{BH} W. Bruns and J. Herzog, {\em Cohen-Macaulay rings},
revised edition, Cambridge University Press, 1998.
%
\bibitem{BPS} D. Bayer, I. Peeva, and B. Sturmfels,
{\em Monomial resolutions},
Math. Res. Lett. {\bf 5} (1998), 31--46.
%
\bibitem{BS} D. Bayer and B. Sturmfels,
{\em Cellular resolutions of monomial modules},
J. reine angew. Math. {\bf 502} (1998), 123--140.
%
\bibitem{B80} A. Bj\"orner,
{\em Shellable and Cohen-Macaulay partially ordered sets},
Trans. Amer. Math. Soc. {\bf 260} (1980), 159--183.
%
\bibitem{B84} A. Bj\"orner,
{\em Posets, regular CW complexes and Bruhat order},
European J. Combin. {\bf 5} (1984), 7--16.
%
\bibitem{B95} A. Bj\"orner,
{\em Topological Methods}, in ``Handbook of Combinatorics'', vol. 2,
1819--1872.
%
\bibitem{BjWa} A. Bj\"orner and M. Wachs,
{\em On lexicographically shellable posets},
Trans. Amer. math. Soc. {\bf 277} (1983), 323--341.
%
\bibitem{C} T. Clark,
{\em A minimal poset resolution of stable ideals}.
In: Progress in Commutative Algebra I: Combinatorics and
Homology, pp. 143--166, de Gruyter, Berlin, 2012.
%
\bibitem{DK} G. Danaraj and V. Klee,
{\em Shellings of spheres and polytopes},
Duke Math. J. {\bf 41}, 443--451.
%
\bibitem{EK} S. Eliahou and M. Kervaire,
{\em Minimal resolutions of some monomial ideals},
J. Algebra {\bf 129} (1990), 1--25. 
%
\bibitem{F} R. Forman, Morse theory for cell complexes, Adv. in Math., 134 (1998), 90--145.
%
\bibitem{HT} J. Herzog and Y. Takayama,
{\em Resolutions by mapping cones}, Homol. Homotopy Appl. {\bf 4} (2002), 277--294.
%
\bibitem{LW} A. T. Lundell and S. Weingram,
{\em The topology of CW complexes}, Van Nostrand Reinhold Company, 1969.
%
\bibitem{M} J. Mermin,
{\em The Eliahou-Kervaire resolution is cellular},
J. Commut. Algebra {\bf 2} (2010), 55--78.
%
\bibitem{NR} U. Nagel and  V. Reiner,
{\em Betti numbers of monomial ideals and shifted skew shapes},
Electron. J. Combin. {\bf 16} (2), 59 (2009).
Special volume in honor of Anders Bj\"orner, Reserach Paper 3.
%
\bibitem{OY} R. Okazaki and K. Yanagawa,
{\em Alternative polarization of Borel fixed ideals, Eliahou-Kervaire type resolution,
and discrete Morse theory}, to appear in J. Alg. Combin.
%
\bibitem{RW} V. Reiner and V. Welker,
{\em Linear syzygies of Stanley-Reisner ideals},
Math. Scand. {\bf 89} (2001), 117--132.
%
\bibitem{S} A. Sinefakopoulos,
{\em On Borel-fixed ideals generated in one degree},
J. Algebra {\bf 319} (2008),2739--2760.
%
\bibitem{St} R. P. Stanley, {\em Combinatorics and commutative algebra},
Birkh{\"a}user, Boston, 2004.
\bibitem{V} M. Velasco,
{\em Minimal free resolutions that are not supported by a CW-complex},
J. Algebra {\bf 319} (2008), 102--114.
%
\bibitem{Y} K. Yanagawa,
{\em Alternative polarizations of Borel fixed ideals},
Nagoya. Math. J. {\bf 207} (2012), 79--93.
%
\end{thebibliography}
\end{document}